\documentclass[11pt]{article}
 \usepackage{bbm}
  \usepackage{amssymb}
\usepackage{amssymb, amsthm, amsmath, amscd}
\setlength{\topmargin}{-45pt} \setlength{\evensidemargin}{0cm}
\setlength{\oddsidemargin}{0cm} \setlength{\textheight}{23.7cm}
\setlength{\textwidth}{16cm}
\usepackage[usenames,dvipsnames]{color}
\usepackage{hyperref}
\hypersetup{hidelinks, colorlinks=true, allcolors=blue, pdfstartview=Fit, breaklinks=true}

\newtheorem{thm}{Theorem}[section]
\newtheorem{lem}[thm]{Lemma}
\newtheorem{prop}[thm]{Proposition}
\newtheorem{cor}[thm]{Corollary}
\newtheorem{NN}[thm]{}

\theoremstyle{definition}\newtheorem{df}[thm]{Definition}
\theoremstyle{definition}\newtheorem{rem}[thm]{Remark}
\theoremstyle{definition}\newtheorem{exm}[thm]{Example}

\renewcommand{\phi}{\varphi}

\newcommand{\N}{\mathbb{N}}

\newcommand{\Q}{\mathbb{Q}}
\newcommand{\R}{\mathbb{R}}
\newcommand{\C}{\mathbb{C}}

\newcommand{\Aff}{\operatorname{Aff}}

\newcommand{\id}{\operatorname{id}}

\newcommand{\hm}{homomorphism}
\newcommand{\dt}{\delta}
\newcommand{\ep}{\epsilon}
\newcommand{\la}{\langle}
\newcommand{\ra}{\rangle}
\newcommand{\andeqn}{\,\,\,{\rm and}\,\,\,}
\newcommand{\rforal}{\,\,\,{\rm for\,\,\,all}\,\,\,}
\newcommand{\CA}{$C^*$-algebra}
\newcommand{\SCA}{$C^*$-subalgebra}

\newcommand{\af}{{\alpha}}
\newcommand{\bt}{{\beta}}

\newcommand{\beq}{\begin{eqnarray}}
\newcommand{\eneq}{\end{eqnarray}}
\newcommand{\tforal}{\,\,\,\text{for\,\,\,all}\,\,\,}
\newcommand{\tand}{\,\,\,\text{and}\,\,\,}
\newcommand{\Qw}{\overline{QT(A)}^w}
\newcommand{\LAff}{{\rm LAff}}
\newcommand{\cuapprox}{\stackrel{\approx}{\sim}}
\newcommand{\Wlog}{Without loss of generality}

\newcommand{\simle}{\stackrel{\sim}{<}}

\title{Double duals and Hilbert modules
}
\author{Huaxin Lin}

\begin{document}

\maketitle

\begin{abstract}
Let $A$ be a  \CA, $H$ be a 
Hilbert $A$-module and $K(H)$ be the closure of the set of finite rank module maps.
We show that the $W^*$-algebra of all  bounded 
$A^{**}$-module maps on the smallest self-dual  Hilbert $A^{**}$-module containing $H$
is isomorphic to  $K(H)^{**}$ as $W^*$-algebras.
We also show that the unit ball of $H$ is closed in $H^\sharp,$ the dual of $H,$ in an $A$-weak topology of $H^\sharp$
as well as  dense in the unit ball of $H^\sharp$ in a weak*-topology 
and some versions of  Kaplansky density theorem for Hilbert $C^*$-modules.

\end{abstract}

\section{Introduction}

Hilbert $C^*$-modules  as a generalization of Hilbert spaces were first introduced by
I. Kaplansky (\cite{Kap})  in 1953  in special cases and later by  W. Paschke  (\cite{Pa}) for general \CA s in 1973.
Hilbert $C^*$-modules are crucial to Kasparov's formulation of $KK$-theory (\cite{Ka}).  
Early applications also include $C^*$-algebraic quantum group theory (see \cite{BS}). Later in the study of Cuntz -semigroups in connection with the classification of amenable \CA s, Hilbert $C^*$-modules play an important role (see, for example, 
\cite{BL}, \cite{BC}, \cite{CEI}, and \cite{ORT}).

Let $A$ be a \CA. Unlike Hilbert spaces, 
bounded module maps on a Hilbert $A$-module $H$
may not have adjoints and  the dual module $H^\sharp,$ i.e., the  Banach $A$-module of all bounded module 
maps from $H$ to $A$ may not be identified as elements in $H.$  Moreover, the \CA\, $L(H)$ of all bounded module maps 
with adjoints may not be a $W^*$-algebras.   If $H_0\subset H$ is a Hilbert $A$-submodule,
a bounded module map $\phi: H_0\to A$ may not be extended to a bounded module map from
$H$ to $A.$ In general, one should not expect  that  $H$ to be decomposed to an orthogonal direct 
sum of $H_0$ and its orthogonal complement. In fact, 
$H_0$  may not even have  an  orthogonal complement.
Study of these phenomena may be found, for eaxmples, in  \cite{Linbd}, \cite{Lininj} and more recently 
in \cite{BL}. 

However, Paschke found (see \cite{Pa}) out that, if $A$ is a $W^*$-algebra, then 
 the dual module $H^\sharp$ of a Hilbert $A$-module $H$ can be made into
a Hilbert $A$-module in a natural way which extends $H,$ and $H^\sharp$ is a self-dual Hilbert $A$-module.
Even if $A$ is not a $W^*$-algebra, one can extend $H$ into an $A^{**}$-module $H\hspace{-0.03in}\bullet\hspace{-0.03in} A^{**}$
naturally. Then its dual $H^\sim:=(H\hspace{-0.03in}\bullet\hspace{-0.03in} A^{**})^\sharp$ becomes a self-dual Hilbert 
$A^{**}$-module containing $H.$    In fact, $H^\sim$ is the smallest 
self-dual Hilbert $A^{**}$-module containing $H$ as a Hilbert $A$-submodule (see Proposition 
\ref{Psmall2}).
Paschke showed  the Banach algebra of all bounded module maps on 
$H^\sim$ becomes a $W^*$-algebra.   

For a Hilbert $A$-module $H,$ the rank one module maps 
are the module maps $T$ of the form $T(h)=x\la y, h\ra$ for all $h\in H$ (and fixed $x, y\in H,$
where $\la \cdot, \cdot\ra$ is the $A$-valued inner product).
Denote by $F(H)$ the linear span of rank one module maps and denote by
$K(H)$ the norm closure of $F(H).$ $K(H)$ is a \CA\, and  an important algebra related 
to the Hilbert module $H.$ 
It was proved by Kasparov (see Theorem 1 of \cite{Ka}) that  the \CA\, 
$L(H)$ may be identified with $M(K(H)),$ the multiplier algebra of $K(H),$ and  it was proved in 
\cite{Linbd} that the Banach algebra of all bounded module maps on $H$ is identified 
with the left multipliers of $K(H)$ (all Hilbert $A$-modules considered in this paper are 
right $A$-modules). 
Over the decades, we eventually realized 
that  it is rather convenient to work in $B(H^\sim)$ in many occasions as we study module maps on  a Hilbert module $H.$   It is not difficult to establish a natural normal \hm\, $\Psi: K(H)^{**}\to B(H^\sim)$
which extends beyond $M(K(H))$ and $LM(K(H)).$  
It remained unknown for many years whether 
$\Psi$ is an isomorphism.   
The original motivation  of this paper is to show that indeed $\Psi$ is 
an isomorphism between $W^*$-algebras $K(H)^{**}$ and $B(H^\sim).$

As we study the relation among Hilbert modules $H,$ $H\hspace{-0.03in}\bullet\hspace{-0.03in} A^{**}$ and $H^\sim,$
naturally, we ask how dense  of $H$ in $H\hspace{-0.03in}\bullet\hspace{-0.03in} A^{**}$ and in $H^\sim$  is?
Since $H^\sim=(H\hspace{-0.03in}\bullet\hspace{-0.03in} A^{**})^\sharp,$ the dual of $H\hspace{-0.03in}\bullet\hspace{-0.03in} A^{**},$ one may also ask the density of $H$ in $H^\sharp$ in general. 

We first note that it was shown (Theorem 6.1 of \cite{BL}) that 
$H$ is dense in $H^\sharp$ in an $A$-weak topology. More precisely,  for any 
$\xi\in H^\sharp,$ there is a net $\{x_\af\}$ in $H$ with $\|x_\af\|\le \|\xi\|$ for all $\af$
such that $\lim_\af \|\xi(x)-\la x_\af, x\ra \|=0$ for all $x\in H.$   However, we show here 
that the unit ball of $H$ is closed in $H^\sharp$ in the topology
where $x_\af\to \xi$ if and only if 
$\lim_\af \|\la \xi-x_\af, \zeta\ra\|=0$ for all $\zeta\in H^\sharp,$  and where the inner product 
is extended to  $H^\sim.$

On the other hand, 
it is easy to see that for any $\xi\in H\hspace{-0.03in}\bullet\hspace{-0.03in} A^{**}$ 
there is a net $\{x_\lambda\}$ in $H$ such that $\lim_\lambda \pi_U(\la x_\lambda, y\ra)(v)=
\pi_U(\la \xi, y\ra)(v)$ for all $y\in H\hspace{-0.03in}\bullet\hspace{-0.03in} A^{**}$ and $v\in H_U,$  where
$H_U$ is the Hilbert space corresponding to the universal representation $\pi_U$ of $A.$
To be a more useful approximation, one may ask
whether the net can be chosen to be bounded
(by $\|\xi\|$).  We will present a 
Kaplansky style density theorem. Perhaps more interesting question is 
how dense $H$ is in $H^\sim=(H\hspace{-0.03in}\bullet\hspace{-0.03in} A^{**})^\sharp.$ 
Since $H^\sim$ is the dual of $H\hspace{-0.03in}\bullet\hspace{-0.03in} A^{**},$
it is relatively easy to show that, for any $\zeta \in H^\sim,$ there 
is a net $\{z_\af\}$ in $H$ such that 
$\lim_\lambda f(\la z_\af, y\ra)=f(\la \zeta, y\ra)$ for all $y\in H\hspace{-0.03in}\bullet\hspace{-0.03in} A^{**}$ and $f\in A^*.$
It is more challenging to show that $y$ can be replaced by any element in $H^\sim=(H\hspace{-0.03in}\bullet\hspace{-0.03in} A^{**})^\sharp.$
We show that the unit ball of $H$ 
is actually dense in the unit ball of $H^\sim$ in the weak*-topology (as $H^\sim$ is 
a conjugate space), another Kaplansky style density theorem. 
In fact, we show a stronger density theorem  that, for any $\xi\in H^\sim,$ there is a net $\{x_\af\}$ in $H$
with $\|x_\af\|\le \|\xi\|$ such that
\beq
\lim_\af  f(\la \xi-x_\af,\xi-x_\af\ra)=0\rforal f\in A^*.
\eneq

\section{Self-duals}

\begin{df}\label{A11}
Let $A$ be a \CA. Denote by $\tilde A$ the minimum unitization of $A.$
We will use the following convention: If $A$ is a \SCA\, of a unital \CA\, $B,$  we write 
$1_{\tilde A}=1_B,$ if either $A$ is unital and $1_A=1_{\tilde A}=1_B,$ or 
$A^\perp=\{b\in B: ba=ab=0\}=\{0\},$
and we  unitize $A$ by
adjoint $1_B$ to form $\tilde A\subset B.$
\end{df}

\begin{df}\label{MMSOT}
Let $X$ be a Hilbert space and $B(X)$ be the \CA\, of all bounded linear operators 
on $X.$ Suppose that $A\subset B(X).$ Then $\overline{A}^{SOT}$ is the  closure of $A$ 
in the strong operator topology.  
Note that if $\{e_\af\}$ is an approximate identity for $A,$ 
then $e_\af\nearrow 1_{M},$ i.e., $e_\af$ increasingly converges to the identity of $M=\overline{A}^{SOT}$
in the strong operator topology as well as in the weak*-topology (of $M$). 
In particular,  we may write $1_{\tilde A}=1_M.$  

This works particularly for the pair $A$ and $A^{**}$ (where $X$ is $H_u,$ 
the Hilbert space corresponding to the universal representation of $A$). 

In general, if $M$ is a $W^*$-algebra, denote by  $M_*$ the pre-dual of $M.$
\end{df}

\begin{df}\label{D101}
Let $A$ be a \CA.
In this paper,  we use the formal definition of Hilbert modules in \cite{Pa}
and consider only right $A$-modules. Recall that  a linear space  $H$ is a pre-Hilbert module
if it is also a right $A$-module
with an inner product $H\times H\to A$ satisfying the following properties:
for any $x, y, z\in H,$ $a\in A$ and $\lambda\in \C,$

(i) $\la x, \lambda y+z\ra=\la x, y\ra+\lambda \la x, y\ra;$

(ii) $\la x, y a\ra=\la x, y\ra a;$

(iii) $\la x, y\ra^*=\la y, x\ra;$

(iv) $\la x, x\ra\ge 0;$ if $\la x, x\ra=0,$ then $x=0.$
 
 Define $\| x\|=\|\la x, x\ra\|^{1/2}$ for $x\in H.$ Then $H$ becomes a normed space. 
 $H$ is a Hilbert $A$-module if $H$ is complete with this norm. 

Denote by $H^\sharp$ the Banach space of all bounded module maps 
from $H$ into $A.$  A Hilbert $A$-module is said to be self-dual, if, for 
every $f\in H^\sharp,$  there is $x\in H$ such that 
$$
f(y)=\la x, y\ra \rforal y\in H.
$$

Denote by $B(H)$ the Banach algebra of all bounded module maps 
from $H$ into itself, and by $L(H),$ the \CA\, of all those bounded module maps
$T$ with an adjoint $T^*$ in $L(H),$ defined by
$$
\la T(x), y\ra=\la x, T^*(y)\ra \rforal x, y\in H.
$$

Let $F(H)$ be the algebra of all finite rank module maps,
i.e., the linear span of all
bounded module maps 
of the form:
$\theta_{x, y}: H\to H,$ defined by
$$
\theta_{x, y}(\xi)=x\la y, \xi\ra
$$
for all $\xi\in H$ and $x, y\in H.$
Denote by $K(H)$ the \CA\, of all bounded module maps which 
are (uniform) norm closure of $F(H).$

By Theorem 1 of \cite{Ka}, we identify $L(H)$ with $M(K(H))$ the multiplier algebra of $K(H)$
and, by Theorem 1.5 of \cite{Linbd},  $B(H)$ with $LM(K(H))$ the Banach algebra of left multipliers of $K(H)$ ( in $K(H)^{**}$).  If $H$ is self-dual, then $B(H)=L(H).$

We refer to \cite{Ka}, \cite{Pa}, \cite{Linbd}  and \cite{Lininj} for common terminologies related to Hilbert $C^*$-modules.
\end{df}

\begin{df}\label{DHsim}
Let $A$ be a \CA\, and $H$   a Hilbert $A$-module. 
Let us give the definition of self-dual Hilbert $A^{**}$-module $H^\sim$ (see Definition 1.3 of \cite{Linbd}). 

We may view $H$ as a Hilbert 
$\tilde A$-module.
Let $B$ be a unital \CA\, containing $A$ and $1_{\tilde A}=1_B$ 
(see the convention in \ref{A11}).
The algebraical tensor product $H\otimes B$  becomes a right $B$-module 
if we set $(h\otimes a) \cdot b= h\otimes ab$ for any $h\in H$ and $a, b\in B.$ 
Define $\la -, -\ra: H\otimes B\times H\otimes B\to B$ by 
$$
\la  \sum_i h_i\otimes a_i,\, \sum_j x_j, \otimes b_j\ra =\sum_{i,j} a_i \la h_i, x_j\ra b_j,
$$
and $N=\{z\in H\otimes A^{**}: \la z, z\ra=0\}.$ 
Then $(H\otimes B)/N$ becomes a per-Hilbert $B$-module (see 4 of \cite{Pa}
--but exchange $B$ with $A$).
Denote by $H\hspace{-0.03in}\bullet\hspace{-0.03in} B:=(H\otimes B)/N)^-$ (the completion) the Hilbert $B$-module. 

We are particularly interested in the case that $B=A^{**}.$
We view $\tilde A$ as a \SCA\, of $A^{**}.$ Then $H^\sim:=(H\hspace{-0.04in}\bullet \hspace{-0.04in}A^{**})^\sharp$
is a self-dual Hilbert $A^{**}$-module. 

Note  that $\tilde A$ is ultraweakly dense in $A^{**}$
(since $A$ is). 
 By applying Theorem 4.2 of \cite{Pa} 
to the pair $A^{**}$ (as $A$ in Theorem 4.2 of \cite{Pa}) and $\tilde A$ (as $B$ in Theorem 
4.2 of \cite{Pa}, see also the remark right after the proof of Theorem 4.2 of \cite{Pa}), we obtain
an isometric (surjective) isomorphism  $\iota: H^\sim:=(H\hspace{-0.03in}\bullet\hspace{-0.03in} A^{**})^\sharp\to B(H, A^{**}),$
the Banach space of all bounded $A$-module maps from $H$ to $A^{**}$ (as written $M(H, A^{**})$ 
in Theorem 4.2 of \cite{Pa}).
  
Let $x\in H$ and $b\in B.$ 
Then 
$$
\|(x\otimes b)/N\|^2=\|b^*\la x, x\ra b\| \le \|x\|^2 \|b^*b\|.
$$
Hence 
\beq
 \|(x\otimes b)/N\|\le \|x\|\|b\|.
\eneq
In what follows, for $x\in H$ and $b\in B,$
we write $x\hspace{-0.03in}\bullet\hspace{-0.03in} b:= (x\otimes b)/N.$

In general, if $E$ is a self-dual Hilbert module, then $B(E)=L(E)$ (see 3.5 of \cite{Pa}).
If in addition,  $A$ is a $W^*$-algebra, $B(E)$ is also a 
 $W^*$ -algebra (see Proposition 3.11 of \cite{Pa}). 
 
 Let us recall the description of the pre-dual of $B(E)$ in this case. 
 Denote by $E_\sim$ the linear space $E$ with the ``twisted" scalar 
 multiplication (i.e., $\lambda x=\bar \lambda x$ for $x\in E$ and $\lambda\in \C$)
 and consider $E\otimes E_\sim \otimes A_*$ with the greatest cross-norm,
 where $A_*$ is the usual pre-dual of $W^*$-algebra $A.$
 For each $T\in B(E),$ define a linear functional $\check{T}$ on $E_\sim  \otimes E\otimes A_*$
 by
 \beq
 \check{T}(\sum_{j=1}^n x_j\otimes y_j \otimes g_j)=\sum_{j=1}^ng_j(\la T(x_j), y_j\ra)
 \eneq
 for $x_j, y_j\in E$ and $g_j\in A_*,$ $1\le j\le n.$
 The map $T\to \Check{T}$ is a linear isometry of $B(E)=L(E)$ into $(E_\sim \otimes E \otimes A_*)^*.$
 It was shown (Proposition 3.10 of \cite{Pa}) that $B(E{\Check{)}}$  is weak*-closed in $E_\sim\otimes E\otimes A_*.$
 A bounded net $\{T_\af\}$ in $B(E)$ converges to $T\in B(E)$ in the weak*-topology if and only if 
 $f(\la T_\af(x), y\ra)\to f(\la T(x), y\ra)$
 for all $x, y\in E$ and $f\in A_*$ (Remark 3.9 and Proposition 3.10 of \cite{Pa}). 
  In particular, $B(H^\sim)$ is a $W^*$-algebra.

\end{df}

\begin{df}\label{Dmap1}
Keep the notation in \ref{DHsim}. 
Then $\iota: H\to H\hspace{-0.03in}\bullet\hspace{-0.03in} B$ defined by $x\to x\otimes 1$ is an injective map.
 Note that 
\beq
&&\la (x \cdot b)\otimes 1-x\otimes b, (x \cdot b)\otimes 1-x\otimes b\ra\\
&&=\la x \cdot b, x \cdot b\ra  -\la x \cdot  b, x\ra b-b^*\la x, x\cdot b\ra+b^*\la x, x\ra b\\
&&=b^*\la x, x\ra b-b^*\la x, x\ra b-b^*\la x, x\ra b+b^*\la x, x\ra b=0. 
\eneq
Hence  $\iota(x\cdot b)=x\otimes b/N$ for all $b\in B.$
In  the case $B=A^{**},$ 
we then extend $\iota$ from $H^\sharp$ to $(H\hspace{-0.03in}\bullet\hspace{-0.03in} A^{**})^\sharp$
by
$$
\iota(f)(x\otimes b)=f(x)b\rforal x\in H \andeqn b\in A^{**}
$$
and $f\in H^\sharp.$  Note that the map is a module map from $H^\sharp$ to  $(H^\sim)^\sharp$
 (which is conjugate module isomorphic to $H^\sim$).

From now on, we may view $H$ as a submodule of $H^\sim$ and, sometime, omit the map
$\iota.$
\end{df}

The following is a convenient  and easy fact that $H\hspace{-0.03in}\bullet\hspace{-0.03in}B$
is smallest Hilbert $B$-module containing $H$  as a Hilbert $A$-module.

\begin{prop}\label{Psmall}
Let $A$  and $B$ be a pair of \CA s such that $A\subset B,$ $B$ is unital and 
$1_{\tilde A}=1_B.$
Suppose that $H$ is a Hilbert $A$-module, $H_1$ is a Hilbert $B$-module
and there is an embedding $\iota: H\to H_1$ as Hilbert modules, 
i.e., $\iota$ is a linear and  $A$-module map such that
\beq
\la \iota(x), \iota(y)\ra=\la x, y\ra\tforal x,y\in H.
\eneq
Then there is a unique $B$-module embedding 
$\tilde \iota: H\hspace{-0.03in}\bullet\hspace{-0.03in}B\to H_1$
(such that
\beq
&&\tilde \iota(x\hspace{-0.03in}\bullet\hspace{-0.03in} b)=\iota(x)
b\tforal x\in H\andeqn b\in B\tand\\ 
&&\la\tilde  \iota(\xi),\tilde \iota(\zeta)\ra=\la \xi , \zeta\ra \tforal \xi, \zeta\in H\hspace{-0.03in}\bullet\hspace{-0.03in}B).
\eneq
\end{prop}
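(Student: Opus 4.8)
The plan is to construct $\tilde\iota$ on the algebraic tensor product $H\otimes B$ first, check it kills the submodule $N$, and then extend by continuity. Concretely, I would define a $\C$-linear map $H\otimes B\to H_1$ on elementary tensors by $x\otimes b\mapsto \iota(x)b$, extended linearly. This is well defined on the algebraic tensor product since the assignment $(x,b)\mapsto\iota(x)b$ is bilinear. The key computation is that this map is inner-product preserving (in particular norm non-increasing), which lets it pass to the quotient by $N$ and extend to the completion.

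The main step is the inner-product identity. For $\xi^0=\sum_i x_i\otimes b_i$ and $\zeta^0=\sum_j y_j\otimes c_j$ in $H\otimes B$, I would compute
\beq
\Big\la \sum_i \iota(x_i)b_i,\ \sum_j \iota(y_j)c_j\Big\ra
=\sum_{i,j} b_i^*\la \iota(x_i),\iota(y_j)\ra c_j
=\sum_{i,j} b_i^*\la x_i,y_j\ra c_j,
\eneq
where the last equality uses the hypothesis $\la\iota(x),\iota(y)\ra=\la x,y\ra$. The right-hand side is exactly $\la\xi^0,\zeta^0\ra$ as defined in \ref{DHsim} (with $B$ in place of $A^{**}$). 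In particular, taking $\zeta^0=\xi^0$ shows that if $\la\xi^0,\xi^0\ra=0$ then the image of $\xi^0$ in $H_1$ has zero norm, hence is $0$; so the map factors through $(H\otimes B)/N$. On the quotient it is isometric, hence uniformly continuous, so it extends uniquely to an isometry $\tilde\iota$ on the completion $H\hspace{-0.03in}\bullet\hspace{-0.03in}B$. Continuity of the inner product then upgrades the identity from elementary tensors to all of $H\hspace{-0.03in}\bullet\hspace{-0.03in}B$, giving $\la\tilde\iota(\xi),\tilde\iota(\zeta)\ra=\la\xi,\zeta\ra$ for all $\xi,\zeta$, and in particular that $\tilde\iota$ is isometric and injective.

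For the $B$-module property, on elementary tensors $\tilde\iota((x\otimes b)\cdot b')=\tilde\iota(x\otimes bb')=\iota(x)bb'=(\iota(x)b)b'=\tilde\iota(x\otimes b)b'$; by linearity and density this gives $B$-linearity on all of $H\hspace{-0.03in}\bullet\hspace{-0.03in}B$. The requested formula $\tilde\iota(x\hspace{-0.03in}\bullet\hspace{-0.03in}b)=\iota(x)b$ is immediate since $x\hspace{-0.03in}\bullet\hspace{-0.03in}b$ is by definition the class of $x\otimes b$. Finally, uniqueness: any $B$-module map $\psi$ satisfying $\psi(x\hspace{-0.03in}\bullet\hspace{-0.03in}b)=\iota(x)b$ agrees with $\tilde\iota$ on the set $\{x\hspace{-0.03in}\bullet\hspace{-0.03in}b: x\in H, b\in B\}$, whose linear span is the image of $H\otimes B$ and is dense in $H\hspace{-0.03in}\bullet\hspace{-0.03in}B$; both maps being bounded (indeed isometric), they coincide everywhere.

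I do not expect a serious obstacle here; the only point requiring a little care is that everything is first defined on the \emph{algebraic} tensor product $H\otimes B$ (so well-definedness is automatic from bilinearity, no quotient yet), and only afterwards does one check compatibility with $N$ and extend to the completion — getting the order of these steps right is the whole content. One should also note at the start, as in \ref{A11}, that $\tilde A\subseteq B$ and $1_{\tilde A}=1_B$ so that $\iota(x)b$ makes sense for $b\in B$ even when $A$ is nonunital (the $A$-module map $\iota$ extends to a $\tilde A$-module map, and $H_1$ is a $B$-module).
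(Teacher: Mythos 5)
Your proposal is correct and follows essentially the same route as the paper's proof: define the map by $x\otimes b\mapsto \iota(x)b$, verify the inner-product identity $\la\tilde\iota(\xi),\tilde\iota(\zeta)\ra=\la\xi,\zeta\ra$ on finite sums (which gives both contractivity and compatibility with $N$), extend to the completion, and obtain uniqueness from density of the span of the elementary tensors. Your explicit ordering — first the algebraic tensor product, then the quotient by $N$, then the completion — makes the well-definedness point slightly more transparent than the paper's presentation, but it is the same argument.
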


\begin{proof}
For any $\sum_{i=1}^nx_i\hspace{-0.03in}\bullet\hspace{-0.03in}a_i,$
where $x_i\in H$ and $a_i\in B$ ($1\le i\le n$),
define 
\beq
\tilde \iota(\xi)=\sum_{i=1}^n \iota(x_i)a_i.
\eneq 
Then, for $\zeta=\sum_{i=1}^n y_i\hspace{-0.03in}\bullet\hspace{-0.03in} b_i,$ 
\beq
\la \tilde \iota(\xi), \tilde \iota(\zeta)\ra =\sum_{i,j}^na_i^*\la x_i, y_j\ra b_j=\la \xi, \zeta\ra.
\eneq
In particular,
\beq
\|\tilde \iota(\xi), \tilde \iota(\xi)\ra\| =\|\sum_{i,j}^na_i^*\la x_i, x_j\ra b_j\|=\|\xi\|^2.
\eneq 
Therefore $\|\tilde \iota\|\le 1$ on $(H\otimes B)/N.$
So $\tilde \iota$ is uniquely extended to a  contractive linear map
from $H\hspace{-0.03in}\bullet\hspace{-0.03in} B$ into $H_1.$ 
It is a $B$-module map. 
Since $(H\otimes B)/N$ is dense in $H\hspace{-0.03in}\bullet\hspace{-0.03in}B,$
$$
\la \tilde \iota(x), \tilde \iota(y)\ra=\la x, y\ra\rforal x, y\in H\hspace{-0.03in}\bullet\hspace{-0.03in}B.
$$
To see this embedding is unique, let $\tilde \iota_1$ be another such embedding.
Then $(\tilde \iota-\tilde \iota_1)|_{H}=0.$
For any $\xi=\sum_{i=1}^n x_i\hspace{-0.03in}\bullet\hspace{-0.03in} a_i,$ 
where $x_i\in H$ and $a_j\in B,$
\beq
(\tilde \iota-\tilde \iota_1)(\xi)=\sum_{i=1}^n (\iota(x_i)-\iota(x_i))\hspace{-0.03in}\bullet\hspace{-0.03in}a_i=0.
\eneq
In other words, $\tilde \iota_1=\tilde \iota.$ 
\end{proof}

\begin{df}\label{Dmap2}
Keep the notations in \ref{D101}, \ref{DHsim} and \ref{Dmap1}.
Recall that $F(H)$ is  the algebra of all finite rank module maps.
Define $\Psi_0: F(H)\to F(H\hspace{-0.03in}\bullet\hspace{-0.03in} B)\subset B(H\hspace{-0.03in}\bullet\hspace{-0.03in} B)$ by
\beq
\Psi_0(\theta_{x, y})(\zeta)=\iota(x)\la \iota(y), \zeta\ra
\eneq
for all $\zeta\in H\hspace{-0.03in}\bullet\hspace{-0.03in} B,$ $x, y\in H.$  $\Psi$ is a $*$-preserving \hm\, from 
the $*$-algebra $F(H)$ into $F(H\hspace{-0.03in}\bullet\hspace{-0.03in} B).$ 
 Moreover,  $\Psi_0$ is an isometry on $F(H).$ In particular, 
$\|\Psi_0\|=1.$
Therefore it extends uniquely to a \CA\, \hm\, from $K(H)$ 
to $K(H\hspace{-0.03in}\bullet\hspace{-0.03in} B)$ (which preserves the norm).
It has to be an isometry as $F(H)$ is dense in $K(H).$

In the case that $B=A^{**},$ we may
define $\tilde \Psi_0: F(H)\to F(H^\sim)\subset B(H^\sim)$ by
\beq
\tilde\Psi_0(\theta_{x, y})(\zeta)=\iota(x)\la \iota(y), \zeta\ra
\eneq
for all $\zeta\in H^\sim,$ $x, y\in H.$  Then 
$\tilde\Psi_0$ is a $*$-preserving \hm\, from 
the $*$-algebra $F(H)$ into $F(H^\sim)$ 
and it extends uniquely to a \CA\, \hm\, $\tilde\Psi_0$ from $K(H)$ 
to $K(H^\sim)$ which preserves the norm. 
Recall that $\iota(H^\sharp)\subset H^\sim.$ 
\end{df}

\begin{prop}\label{PTid}
Let $A\subset B$ be a pair of \CA s, where $B$ is unital
and $1_B=1_{\tilde A}.$ 
Let $T\in K(H).$ Then 
$\Psi_0(T)(x\hspace{-0.03in}\bullet\hspace{-0.03in} b)=T(x)\hspace{-0.03in}\bullet\hspace{-0.03in} b$
for all $x\in H$ and $b\in B.$ 
\end{prop}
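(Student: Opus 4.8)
The plan is to verify the claimed identity first on the generating finite-rank maps $\theta_{x,y}\in F(H)$ and on the elementary tensors $x\hspace{-0.03in}\bullet\hspace{-0.03in}b$, and then pass to the norm closure in both $T$ and in the module element. For $T=\theta_{x,y}$ and an elementary tensor $z\hspace{-0.03in}\bullet\hspace{-0.03in}b$, by the definition of $\Psi_0$ in \ref{Dmap2} we compute $\Psi_0(\theta_{x,y})(z\hspace{-0.03in}\bullet\hspace{-0.03in}b)=\iota(x)\la \iota(z\hspace{-0.03in}\bullet\hspace{-0.03in}b),\,\iota(z)\hspace{-0.03in}\bullet\hspace{-0.03in}b\ra$ — wait, more carefully: $\Psi_0(\theta_{x,y})(\zeta)=\iota(x)\la\iota(y),\zeta\ra$, so with $\zeta=z\hspace{-0.03in}\bullet\hspace{-0.03in}b$ we get $\iota(x)\la\iota(y),z\hspace{-0.03in}\bullet\hspace{-0.03in}b\ra$. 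Since $\iota$ identifies $H$ inside $H\hspace{-0.03in}\bullet\hspace{-0.03in}B$ via $y\mapsto y\hspace{-0.03in}\bullet\hspace{-0.03in}1$, and the inner product on $H\hspace{-0.03in}\bullet\hspace{-0.03in}B$ restricts to that of $H$ on the image of $H$ (and is $B$-linear in the second variable), this equals $(x\hspace{-0.03in}\bullet\hspace{-0.03in}1)\cdot(\la y,z\ra b)=x\hspace{-0.03in}\bullet\hspace{-0.03in}(\la y,z\ra b)=(x\la y,z\ra)\hspace{-0.03in}\bullet\hspace{-0.03in}b=\theta_{x,y}(z)\hspace{-0.03in}\bullet\hspace{-0.03in}b$, using the module action $(x\hspace{-0.03in}\bullet\hspace{-0.03in}a)\cdot b=x\hspace{-0.03in}\bullet\hspace{-0.03in}ab$ from \ref{DHsim} and linearity of $z\mapsto z\hspace{-0.03in}\bullet\hspace{-0.03in}b$ (implicit in the quotient construction). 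By linearity in $T$, the identity $\Psi_0(T)(x\hspace{-0.03in}\bullet\hspace{-0.03in}b)=T(x)\hspace{-0.03in}\bullet\hspace{-0.03in}b$ then holds for all $T\in F(H)$.

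Next I would extend from $F(H)$ to $K(H)$ by continuity. Fix $x\in H$, $b\in B$ and let $T\in K(H)$ with $T_n\to T$ in norm, $T_n\in F(H)$. Since $\Psi_0$ is a $*$-homomorphism with $\|\Psi_0\|\le 1$ (shown in \ref{Dmap2}), $\Psi_0(T_n)\to\Psi_0(T)$ in the norm of $B(H\hspace{-0.03in}\bullet\hspace{-0.03in}B)$, hence $\Psi_0(T_n)(x\hspace{-0.03in}\bullet\hspace{-0.03in}b)\to\Psi_0(T)(x\hspace{-0.03in}\bullet\hspace{-0.03in}b)$ in $H\hspace{-0.03in}\bullet\hspace{-0.03in}B$ (using the bound $\|x\hspace{-0.03in}\bullet\hspace{-0.03in}b\|\le\|x\|\|b\|$ from \ref{DHsim}). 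On the other hand $T_n(x)\to T(x)$ in $H$, so $T_n(x)\hspace{-0.03in}\bullet\hspace{-0.03in}b\to T(x)\hspace{-0.03in}\bullet\hspace{-0.03in}b$ again by $\|\,\cdot\hspace{-0.03in}\bullet\hspace{-0.03in}b\|\le\|b\|\,\|\cdot\|$. Since the two sequences agree term by term, their limits agree, giving $\Psi_0(T)(x\hspace{-0.03in}\bullet\hspace{-0.03in}b)=T(x)\hspace{-0.03in}\bullet\hspace{-0.03in}b$ for all $T\in K(H)$.

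There is really no serious obstacle here; the statement is a bookkeeping fact. The only point requiring a little care is being explicit that the inner product on $H\hspace{-0.03in}\bullet\hspace{-0.03in}B$ restricted to (the image of) $H$ coincides with the $A$-valued inner product of $H$ — this is exactly the computation in \ref{Dmap1} showing $\iota(x\cdot b)=x\otimes b/N$ together with the defining formula for $\la\cdot,\cdot\ra$ on $H\otimes B$, so one should cite \ref{DHsim} and \ref{Dmap1} rather than re-derive it. A secondary subtlety is that $z\mapsto z\hspace{-0.03in}\bullet\hspace{-0.03in}b$ is linear and norm-continuous on $H$; this follows from the tensor-product construction and the estimate $\|(z\otimes b)/N\|\le\|z\|\|b\|$. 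Once these are noted, the density argument closes the proof cleanly.
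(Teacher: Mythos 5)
Your proof is correct and follows essentially the same route as the paper: establish the identity for finite-rank maps directly from the definition of $\Psi_0$, then extend to all of $K(H)$ by norm-density of $F(H)$, using that $\Psi_0$ is contractive and that $\|z\hspace{-0.03in}\bullet\hspace{-0.03in}b\|\le\|z\|\|b\|.$ The only difference is cosmetic: you spell out the $\theta_{x,y}$ computation that the paper asserts ``from the definition,'' and you phrase the closure step with a convergent sequence where the paper runs an explicit $\ep$-estimate.
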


\begin{proof}
From the definition, for any $S\in F(H),$
any $x\in H$ and any $b\in B,$
\beq
\Psi_0(S)(x\otimes b)=S(x)\otimes b\,\,\, ({\rm mod}\, N).
\eneq 
Fix $T\in K(H)$ and 
 let $\ep>0.$
 There exists $S\in F(H)$ such that
 \beq
 \|T-S\|<\ep/2(1+\|x\hspace{-0.03in}\bullet\hspace{-0.03in}  b\|+\|x\|\|b\|).
 \eneq
 Then  
 \beq
 \|\Psi_0(T)-\Psi_0(S)\|<\ep/4(1+\|x\otimes b\|+\|x\|\|b\|)\andeqn \|T(x)\hspace{-0.03in}\bullet\hspace{-0.03in} b-S(x)\hspace{-0.03in}\bullet\hspace{-0.03in} b\|\le \ep/2.
 \eneq
Hence 
\beq
\|\Psi_0(T)(x\hspace{-0.03in}\bullet\hspace{-0.03in}  b)-T(x)\hspace{-0.03in}\bullet\hspace{-0.03in} b\|<\ep.
\eneq
Since this holds for all $\ep>0,$ we conclude that
$$
\Psi_0(x\hspace{-0.03in}\bullet\hspace{-0.03in} b)=T(x)\hspace{-0.03in}\bullet\hspace{-0.03in} b.
$$
\end{proof}

\begin{lem}\label{Lelambda}
Let $A$  and $B$ be as in Proposition \ref{PTid}, 
and $H$ be a Hilbert $A$-module.
Suppose that $\{E_\lambda\}$ is an approximate identity for $K(H).$
Then
$
\{\Psi_0)(E_\lambda)\}$ forms an approximate identity for $K(H\hspace{-0.03in}\bullet\hspace{-0.03in} B).$
Moreover
\beq
\lim_{\lambda}\|\Psi_0(E_\lambda)(x)-x\|=0\rforal x\in H\hspace{-0.03in}\bullet\hspace{-0.03in} B.
\eneq
\end{lem}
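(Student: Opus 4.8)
The plan is to deduce both assertions from one elementary fact: if $\{E_\lambda\}$ is an approximate identity for $K(H),$ then $\lim_\lambda\|E_\lambda(y)-y\|=0$ for every $y\in H.$ To prove this, fix $y\in H$ and put $a=\la y,y\ra\in A_+.$ Since $\theta_{y,y}^{\,n}(y)=ya^n$ for all $n\ge 1,$ a functional calculus argument gives $g(\theta_{y,y})(y)=yg(a)$ for every continuous $g$ with $g(0)=0.$ Applying this with $g_\ep(t)=t(t+\ep)^{-1}$ (so that $g_\ep(\theta_{y,y})\in K(H)$), and using $\|yg_\ep(a)-y\|^2=\|a\,(g_\ep(a)-1)^2\|\le \ep/4,$ we see that $y$ is a norm limit of elements of the form $S(y)$ with $S\in K(H).$ A routine argument — using that $\{E_\lambda\}$ is bounded and that $\|E_\lambda S-S\|\to 0$ for every $S\in K(H)$ — then yields $E_\lambda(y)\to y$ for all $y\in H.$

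With this in hand I would first establish the displayed ``Moreover'' assertion. For a finite sum $\xi=\sum_{i=1}^n x_i\hspace{-0.03in}\bullet\hspace{-0.03in} b_i\in (H\otimes B)/N,$ Proposition \ref{PTid} gives $\Psi_0(E_\lambda)(\xi)=\sum_{i=1}^n E_\lambda(x_i)\hspace{-0.03in}\bullet\hspace{-0.03in} b_i,$ hence
\[
\|\Psi_0(E_\lambda)(\xi)-\xi\|\le \sum_{i=1}^n\|(E_\lambda(x_i)-x_i)\hspace{-0.03in}\bullet\hspace{-0.03in} b_i\|\le \sum_{i=1}^n\|E_\lambda(x_i)-x_i\|\,\|b_i\|\longrightarrow 0
\]
by the previous paragraph together with the estimate $\|z\hspace{-0.03in}\bullet\hspace{-0.03in} b\|\le \|z\|\|b\|.$ Since $(H\otimes B)/N$ is dense in $H\hspace{-0.03in}\bullet\hspace{-0.03in} B$ and $\|\Psi_0(E_\lambda)\|=\|E_\lambda\|$ is uniformly bounded ($\Psi_0$ being isometric on $K(H)$), a standard three-term approximation argument extends this to $\lim_\lambda\|\Psi_0(E_\lambda)(x)-x\|=0$ for every $x\in H\hspace{-0.03in}\bullet\hspace{-0.03in} B.$

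Finally, I would use that each $\Psi_0(E_\lambda)$ is a bounded module map on $H\hspace{-0.03in}\bullet\hspace{-0.03in} B,$ so that for $\xi,\zeta\in H\hspace{-0.03in}\bullet\hspace{-0.03in} B$ one has $\Psi_0(E_\lambda)\,\theta_{\xi,\zeta}=\theta_{\Psi_0(E_\lambda)\xi,\,\zeta};$ consequently
\[
\|\Psi_0(E_\lambda)\,\theta_{\xi,\zeta}-\theta_{\xi,\zeta}\|=\|\theta_{\Psi_0(E_\lambda)\xi-\xi,\,\zeta}\|\le \|\Psi_0(E_\lambda)\xi-\xi\|\,\|\zeta\|\longrightarrow 0 .
\]
By linearity this gives $\Psi_0(E_\lambda)S\to S$ for all $S\in F(H\hspace{-0.03in}\bullet\hspace{-0.03in} B),$ and then, by density of $F(H\hspace{-0.03in}\bullet\hspace{-0.03in} B)$ in $K(H\hspace{-0.03in}\bullet\hspace{-0.03in} B)$ and the uniform boundedness of $\{\Psi_0(E_\lambda)\},$ for all $S\in K(H\hspace{-0.03in}\bullet\hspace{-0.03in} B).$ We may take $\{E_\lambda\}$ to consist of positive contractions; since $\Psi_0$ is a $*$-homomorphism, the $\Psi_0(E_\lambda)$ are then positive contractions as well, and a left approximate identity of positive contractions in a \CA\, is automatically two-sided. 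Hence $\{\Psi_0(E_\lambda)\}$ is an approximate identity for $K(H\hspace{-0.03in}\bullet\hspace{-0.03in} B).$

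I expect the only genuinely nontrivial point to be the first step, the non-degeneracy statement $E_\lambda(y)\to y$ for $y\in H;$ this is classical — it underlies the identification $L(H)=M(K(H))$ of \cite{Ka} and appears in \cite{Linbd} — and the short functional-calculus computation above makes the proof self-contained. Everything past that is a mechanical combination of Proposition \ref{PTid}, the inequality $\|z\hspace{-0.03in}\bullet\hspace{-0.03in} b\|\le \|z\|\|b\|,$ and routine density and boundedness arguments.
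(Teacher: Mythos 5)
Your proof is correct and follows essentially the same route as the paper's: reduce everything via Proposition \ref{PTid} and the estimate $\|z\hspace{-0.03in}\bullet\hspace{-0.03in} b\|\le\|z\|\|b\|$ to the non-degeneracy fact $\lim_\lambda\|E_\lambda(y)-y\|=0$ on $H,$ then handle rank-one maps $\theta_{\xi,\zeta}$ and pass to all of $K(H\hspace{-0.03in}\bullet\hspace{-0.03in}B)$ by density and uniform boundedness. The only differences are cosmetic: the paper simply cites Lemma 3.1 of \cite{BL} for the non-degeneracy step where you give a self-contained functional-calculus argument, and you make explicit the (correct) observation that a left approximate identity of positive contractions is automatically two-sided.
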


\begin{proof}
By Lemma 3.1 of \cite{BL}, 
\beq\label{Lalambda-1}
\lim_{\lambda} \|E_\lambda (x)-x\|=0\rforal x\in H.
\eneq
Let $S=\sum_{i=1}^n \theta_{x_i, y_i},$ where 
$x_i, y_i\in 
(H\otimes  B)/N,$ 
$1\le i\le n.$
Write $x_i=\sum_{j=1}^{k(i)} \xi_{j,i}\hspace{-0.03in}\bullet\hspace{-0.03in}  b_{j,i},$ where 
$\xi_{j,i}\in H$ and $b_{j,i}\in B,$ $j=1,2,..., k(i),$ $i=1,2,...,n.$ 
By Proposition \ref{PTid}, 
\beq
\Psi_0(E_\lambda)(\xi_{j,i}\hspace{-0.03in}\bullet\hspace{-0.03in} b_{j,i})=E_{\lambda}(\xi_{j,i})\hspace{-0.03in}\bullet\hspace{-0.03in} b_{j,i}.
\eneq
By \eqref{Lalambda-1},
\beq\label{Lalambda-2}
\lim_{\lambda} \|\Psi_0(E_\lambda)(\xi_{j,i}\hspace{-0.03in}\bullet\hspace{-0.03in}  b_{j,i})-(\xi_{j,i}\hspace{-0.03in}\bullet\hspace{-0.03in}  b_{j,i})\|=0
\eneq
for $j=1,2,..., k(i),$ $i=1,2,...,n.$ 
It follows that
\beq
\lim_{\lambda} \|\Psi_0(E_\lambda)(x_i)-x_i\|=0,\,\,\, i=1,2,...,n.
\eneq
For any $z\in H\hspace{-0.03in}\bullet\hspace{-0.03in} B,$
\beq
\Psi_0(E_\lambda)\theta_{x_i,y_i}(z)&=&(\Psi_0(E_\lambda)x_i)\la y_i, z\ra\\
&=&
E_\lambda(x_i)\la y_i, z\ra.
\eneq
It follows that (for $1\le i\le n$)
\beq
\lim_\lambda \|\Psi_0(E_\lambda)\theta_{x_i, y_i}-\theta_{x_i, y_i}\|=0.
\eneq
Hence 
\beq
\lim_{\lambda}\|\Psi_0(E_\lambda)S-S\|=0.
\eneq
The set of those  module maps with the form of $S$ is norm  dense in $K(H\hspace{-0.03in}\bullet\hspace{-0.03in} B).$  Therefore we conclude that
\beq
\lim_\lambda \|\Psi_0(E_\lambda)S-S\|=0\rforal S\in  K(H\hspace{-0.03in}\bullet\hspace{-0.03in} B).
\eneq
It follows that $\{\Psi_0(E_\lambda)\}$ forms an approximate identity for 
$K(H\hspace{-0.03in}\bullet\hspace{-0.03in} B).$


\end{proof}

\begin{NN}\label{NN1}
{\rm Let $A$ be a \CA\, and $H$ be a Hilbert $A$-module.
Then $H^\sharp$ is a Banach $A$-module in general.
Recall that, for each $T\in B(H),$ one may define a bounded conjugate module map  $T^*: H\to H^\sharp$ 
as follows.
For $x, y\in H,$ define
\beq
T^*(x)(y)=\la x,T(y)\ra.
\eneq
So, for a fixed $x,$ $T^*(x)$ gives an element in $H^\sharp.$ 
Moreover, $T^*$ is a bounded conjugate module map from $H$ to $H^\sharp$ with $\|T^*\|=\|T\|.$
However, if we view $H$ as a submodule of $H^\sharp,$ then $T^*$ is a bounded module map. 
Note that, if $T\in L(H),$ then $T^*\in L(H)$ and $T^*(H)\subset H.$

If $A$ is a $W^*$-algebra, by Theorem 3.2 of \cite{Pa},  $H^\sharp$ becomes a Hilbert $A$ module
in a natural way.  
For $T\in B(H)$ and $f\in H^\sharp,$
define, for each $x\in H,$  
\beq
\tilde T(f)(x)=\la f, T^*(x)\ra, 
\eneq
where $T^*$ is defined above.
Thus $\tilde T(f)$ is a bounded linear module map from $H$ to $A$ with $\|\tilde T(f)\|\le \|T\|\|f\|.$
Hence we extend $T$ to a bounded (conjugate) module map from $H^\sharp$ to $H^\sharp.$
%
As we view $H^\sharp$ as a Hilbert $A$-submodule in this case, $T$ is in fact a bounded module map
on $H^\sharp$ (we will take the conjugate as Hilbert space cases). 
By Corollary 3.7 of \cite{Pa}, such extension is unique. 

By Lemma 3.7 of \cite{Lininj}, one may ease the assumption that $A$ is a $W^*$-algebra to 
the assumption that $A$ is a monotone complete \CA.}

\end{NN}

\begin{prop}\label{PElsharp}
Let $A$  and $B$ be as in \ref{PTid},
$H$ be a Hilbert $A$-module and $\{E_\lambda\}$ an approximate identity 
for $K(H).$ 
Then 
\beq\label{232-n}
\lim_{\lambda}(\sup\{\|\tilde \Psi_0(E_\lambda)(f)(x)-f(x)\|: f\in (H\hspace{-0.03in}\bullet\hspace{-0.03in} B)^\sharp,\,\|f\|\le 1\})=0\rforal
x\in H\hspace{-0.03in}\bullet\hspace{-0.03in} B.
\eneq

Moreover,  suppose that $(H\hspace{-0.03in}\bullet\hspace{-0.03in} B)^\sharp $ 
extends $H\hspace{-0.03in}\bullet\hspace{-0.03in} B$ as Hilbert $B$-module,  then,
for any $T\in B((H\hspace{-0.03in}\bullet\hspace{-0.03in} B)^\sharp ),$
\beq
\lim_{\lambda}\|\la \tilde \Psi_0(E_\lambda) T \tilde\Psi_0(E_\lambda)(x), y\ra-\la T(x), y\ra\|=0\rforal x, y\in H\hspace{-0.03in}\bullet\hspace{-0.03in} B.
\eneq
\end{prop}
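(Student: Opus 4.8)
For the record, I would prove Proposition \ref{PElsharp} as follows.

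The plan is to deduce both assertions from the norm convergence $\lim_\lambda\|\Psi_0(E_\lambda)(x)-x\|=0$ ($x\in H\bullet B$) supplied by Lemma \ref{Lelambda}. The first step is to make precise how $\tilde\Psi_0(E_\lambda)$ acts on $(H\bullet B)^\sharp$. Since $\Psi_0$ is a $*$-preserving \hm, $\Psi_0(E_\lambda)\in K(H\bullet B)\subset L(H\bullet B)$ has an adjoint $\Psi_0(E_\lambda)^*=\Psi_0(E_\lambda^*)$, and $\Psi_0(E_\lambda)$ induces a bounded map on $(H\bullet B)^\sharp$ by the transpose rule $f\mapsto f\circ\Psi_0(E_\lambda^*)$; this formula makes sense for every $f\in(H\bullet B)^\sharp$ because $\Psi_0(E_\lambda^*)$ maps $H\bullet B$ into itself, it restricts to $\Psi_0(E_\lambda)$ on the embedded copy of $H\bullet B$, and it agrees with the extension of \ref{NN1} whenever $(H\bullet B)^\sharp$ is a Hilbert module. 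This is the map $\tilde\Psi_0(E_\lambda)$ occurring in the statement. I would also observe that $\{E_\lambda^*\}$ is again an approximate identity for $K(H)$, so Lemma \ref{Lelambda} applies to it as well (alternatively one may simply assume the $E_\lambda$ are positive).

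For the displayed equation \eqref{232-n}, fix $x\in H\bullet B$ and put $z_\lambda=\Psi_0(E_\lambda^*)(x)-x\in H\bullet B$. For every $f\in(H\bullet B)^\sharp$ with $\|f\|\le1$ one has $\tilde\Psi_0(E_\lambda)(f)(x)-f(x)=f(z_\lambda)$, hence $\|\tilde\Psi_0(E_\lambda)(f)(x)-f(x)\|\le\|z_\lambda\|$; testing against the canonical functional $w\mapsto\la z_\lambda,w\ra$ (of norm $\|z_\lambda\|$) shows the supremum over the unit ball in fact equals $\|z_\lambda\|$. By Lemma \ref{Lelambda} applied to $\{E_\lambda^*\}$, $\|z_\lambda\|\to0$, which is \eqref{232-n}.

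For the second assertion, suppose $(H\bullet B)^\sharp$ is a Hilbert $B$-module extending $H\bullet B$ (which happens, e.g., when $B$ is monotone complete, by \ref{NN1}). Then $\tilde\Psi_0$ extends to a \CA\ \hm\ from $K(H)$ into $K((H\bullet B)^\sharp)$, so $F_\lambda:=\tilde\Psi_0(E_\lambda)$ is contractive, has adjoint $F_\lambda^*=\tilde\Psi_0(E_\lambda^*)$, and satisfies $F_\lambda(z)=\Psi_0(E_\lambda)(z)$ and $F_\lambda^*(z)=\Psi_0(E_\lambda^*)(z)$ for $z\in H\bullet B$. Fixing $T\in B((H\bullet B)^\sharp)$ and $x,y\in H\bullet B$, I would telescope
\[
\la F_\lambda TF_\lambda(x),y\ra-\la T(x),y\ra=\la F_\lambda T(F_\lambda(x)-x),y\ra+\la T(x),F_\lambda^*(y)-y\ra,
\]
the last term coming from transferring one $F_\lambda$ across the inner product. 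The Cauchy--Schwarz inequality $\|\la u,v\ra\|\le\|u\|\,\|v\|$ then bounds the first summand by $\|T\|\,\|y\|\,\|\Psi_0(E_\lambda)(x)-x\|$ and the second by $\|T(x)\|\,\|\Psi_0(E_\lambda^*)(y)-y\|$; both tend to $0$ by Lemma \ref{Lelambda}, finishing the proof.

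The substance of the proposition is entirely contained in Lemma \ref{Lelambda}, so the one place that calls for care is the first step: identifying $\tilde\Psi_0(E_\lambda)$ on $(H\bullet B)^\sharp$ as the transpose of $\Psi_0(E_\lambda^*)$ in a way valid on all of $(H\bullet B)^\sharp$ — so that the first assertion does not covertly require $(H\bullet B)^\sharp$ to be a Hilbert module — and checking that this extension stays contractive with the expected adjoint, which is what legitimises the telescoping in the second assertion. Everything after that is the elementary module inequality together with the identity $\sup\{\|f(z)\|:f\in(H\bullet B)^\sharp,\ \|f\|\le1\}=\|z\|$.
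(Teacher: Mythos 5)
Your proposal is correct and follows essentially the same route as the paper: both parts reduce to the norm convergence $\|\Psi_0(E_\lambda)(x)-x\|\to 0$ from Lemma \ref{Lelambda}, with the first assertion obtained from $|\tilde\Psi_0(E_\lambda)(f)(x)-f(x)|\le\|f\|\,\|\Psi_0(E_\lambda^*)(x)-x\|$ and the second from a telescoping of $\la\tilde\Psi_0(E_\lambda)T\tilde\Psi_0(E_\lambda)(x),y\ra-\la T(x),y\ra$ into two terms controlled by Cauchy--Schwarz. Your telescoping groups the terms slightly differently from the paper's (which first moves one $\tilde\Psi_0(E_\lambda)$ across the inner product and then splits), and your explicit identification of $\tilde\Psi_0(E_\lambda)$ on all of $(H\hspace{-0.03in}\bullet\hspace{-0.03in}B)^\sharp$ as $f\mapsto f\circ\Psi_0(E_\lambda^*)$ is a welcome clarification, but the substance is identical.
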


\begin{proof}
Fix $f\in H^\sharp.$ For any $x\in H\hspace{-0.03in}\bullet\hspace{-0.03in} B,$  by Lemma \ref{Lelambda}, 
\beq
\|\tilde \Psi_0(E_\lambda)(f)(x)-f(x)\|=\|f(E_\lambda(x))-f(x)\|\le \|f\|\|E_\lambda(x)-x\|\to 0.
\eneq
Hence \eqref{232-n} holds.

To see the ``Moreover" part of the lemma,  let $T\in  B((H\hspace{-0.03in}\bullet\hspace{-0.03in} B)^\sharp).$
Then, for any $x, y\in H\hspace{-0.03in}\bullet\hspace{-0.03in} B,$  
\beq\nonumber
&&\hspace{-0.8in}\|\la \tilde \Psi_0(E_\lambda) T \tilde \Psi_0(E_\lambda)(x), y\ra -\la T(x), y \ra \|\\
&&\le  \|\la T \tilde \Psi_0(E_\lambda) (x), \tilde \Psi_0(E_\lambda)(y)\ra -\la T(x), 
\tilde \Psi_0(E_\lambda)(y)\ra\|\\
&&\hspace{0.5in}+\|\la T(x), \tilde\Psi_0(E_\lambda)(y)\ra -\la T(x), y\ra\|\\
&&\le  \|y\|\|T\|\|\Psi_0(E_\lambda)(x)-x\|+\|T\| \|x\| \| \Psi_0(E_\lambda)(y)-y\|
\eneq
By applying  Lemma \ref{Lelambda} to the two terms of the last inequality above, 
we conclude that
\beq
\lim_{\lambda}\|\la \tilde \Psi_0(E_\lambda) T \tilde \Psi_0(E_\lambda)(x), y\ra -\la T(x), y \ra \|=0
\rforal 
x, y\in H\hspace{-0.03in}\bullet\hspace{-0.03in} B.
\eneq
\end{proof}

\begin{df}\label{DB(H)}
Let $A$ be a \CA\, and $H$ be a Hilbert $A$-module.
Recall  (Theorem 1.5 of \cite{Linbd}) that we identify $B(H)$ with $LM(K(H)),$
the Banach algebra of left multipliers of $K(H)$ (in $K(H)^{**}$).

By \ref{Lelambda},  $\Psi_0$ maps $K(H)$ into $K(H\hspace{-0.02in}\bullet\hspace{-0.02in} B)$ 
which maps approximate identities  to approximate identities.
It follows that  we may extend  a \hm\, $\Psi_0: B(H)=LM(K(H))\to LM(K(H\hspace{-0.03in}\bullet\hspace{-0.03in}B))=B(H\hspace{-0.03in}\bullet\hspace{-0.03in} B)$ 
by
$$
\Psi_0(T)=\lim_\lambda \Psi_0(TE_\lambda),
$$
where the convergence is in the  left strict topology of $LM(K(H\hspace{-0.03in}\bullet\hspace{-0.03in} B)).$ 
Since ${\Psi_0}|_{K(H)}$ is an isometry, so is $\Psi_0.$ 

We are mostly interested in the case that $B=A^{**}.$ 
By Theorem 3.2 of \cite{Pa},  $(H\hspace{-0.03in}\bullet\hspace{-0.03in} A^{**}))^\sharp$  is a self-dual  Hilbert $A^{**}$-module.
Therefore, by \ref{NN1}, for each $T\in B(H),$  the extension ${\widetilde{\Psi_0(T)}}$
is unique. Hence $\Psi_0$  may be extended to a Banach algebra  isomorphism $\tilde \Psi_0$ from $B(H)$ into 
$B(H^\sim)$   such that
\beq\label{DB(H)-1}
\tilde \Psi_0(T)|_{H\hspace{-0.02in}\bullet\hspace{-0.02in} A^{**}}=\Psi_0(T)\rforal T\in B(H).
\eneq
We will visualize the map $\Psi_0$  a bit more.
\end{df}

\begin{prop}\label{PPsi}
Let $A$  and $B$ be a  pair of \CA s as in Proposition \ref{PTid} and 
$H$ be a Hilbert $A$-module.
Then, for any $T\in B(H),$ 
\beq\label{PPsi-1}
\lim_{\lambda}\|\Psi_0(T)\Psi_0(E_\lambda)(x)-\Psi_0(T)(x)\|=0\rforal x\in H\hspace{-0.03in}\bullet\hspace{-0.03in} B.
\eneq
Moreover
\beq
\Psi_0(T)(x\hspace{-0.03in}\bullet\hspace{-0.03in} b)=T(x)\hspace{-0.03in}\bullet\hspace{-0.03in}  b \rforal x\in H\andeqn b\in B.
\eneq 
Consequently, 
$\tilde \Psi_0({\rm id}_H)={\rm id}_{H^\sim}.$
\end{prop}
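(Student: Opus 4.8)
The plan is to dispatch the three assertions in turn, each reducing to Lemma~\ref{Lelambda} and Proposition~\ref{PTid}, with no essentially new input. For \eqref{PPsi-1}, I would simply observe that by the construction in Definition~\ref{DB(H)} the map $\Psi_0(T)$ is a bounded module map on $H\hspace{-0.03in}\bullet\hspace{-0.03in}B$ with $\|\Psi_0(T)\|=\|T\|$; hence for $x\in H\hspace{-0.03in}\bullet\hspace{-0.03in}B$,
\[
\|\Psi_0(T)\Psi_0(E_\lambda)(x)-\Psi_0(T)(x)\|=\|\Psi_0(T)\bigl(\Psi_0(E_\lambda)(x)-x\bigr)\|\le\|T\|\,\|\Psi_0(E_\lambda)(x)-x\|,
\]
and the right-hand side tends to $0$ by Lemma~\ref{Lelambda}. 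Nothing delicate occurs here.

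Next, for the ``Moreover'' identity I would use that $\Psi_0$ is built in Definition~\ref{DB(H)} as a Banach-algebra homomorphism from $LM(K(H))$ to $LM(K(H\hspace{-0.03in}\bullet\hspace{-0.03in}B))$ extending $\Psi_0|_{K(H)}$. Since $T\in B(H)=LM(K(H))$ and $E_\lambda\in K(H)$, we have $TE_\lambda\in K(H)$ and $\Psi_0(T)\Psi_0(E_\lambda)=\Psi_0(TE_\lambda)$. Applying Proposition~\ref{PTid} to $TE_\lambda\in K(H)$, for $x\in H$ and $b\in B$ we get
\[
\Psi_0(T)\Psi_0(E_\lambda)(x\hspace{-0.03in}\bullet\hspace{-0.03in}b)=\Psi_0(TE_\lambda)(x\hspace{-0.03in}\bullet\hspace{-0.03in}b)=(TE_\lambda)(x)\hspace{-0.03in}\bullet\hspace{-0.03in}b=T(E_\lambda(x))\hspace{-0.03in}\bullet\hspace{-0.03in}b.
\]
Now let $\lambda\to\infty$: the left-hand side converges to $\Psi_0(T)(x\hspace{-0.03in}\bullet\hspace{-0.03in}b)$ by \eqref{PPsi-1} (applied with $x\hspace{-0.03in}\bullet\hspace{-0.03in}b\in H\hspace{-0.03in}\bullet\hspace{-0.03in}B$ in place of $x$), while on the right $E_\lambda(x)\to x$ in $H$ by \eqref{Lalambda-1}, $T$ is bounded, and $z\mapsto z\hspace{-0.03in}\bullet\hspace{-0.03in}b$ has norm at most $\|b\|$, so $T(E_\lambda(x))\hspace{-0.03in}\bullet\hspace{-0.03in}b\to T(x)\hspace{-0.03in}\bullet\hspace{-0.03in}b$. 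Therefore $\Psi_0(T)(x\hspace{-0.03in}\bullet\hspace{-0.03in}b)=T(x)\hspace{-0.03in}\bullet\hspace{-0.03in}b$.

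Finally, for the ``Consequently'' statement I would take $B=A^{**}$ and $T={\rm id}_H$ in the identity just proved, obtaining $\Psi_0({\rm id}_H)(x\hspace{-0.03in}\bullet\hspace{-0.03in}b)=x\hspace{-0.03in}\bullet\hspace{-0.03in}b$ for all $x\in H$, $b\in A^{**}$. Such elements span a dense subspace of $H\hspace{-0.03in}\bullet\hspace{-0.03in}A^{**}$ and $\Psi_0({\rm id}_H)$ is bounded, so $\Psi_0({\rm id}_H)={\rm id}_{H\bullet A^{**}}$. By \eqref{DB(H)-1}, $\tilde\Psi_0({\rm id}_H)$ is then a bounded module map on $H^\sim=(H\hspace{-0.03in}\bullet\hspace{-0.03in}A^{**})^\sharp$ extending ${\rm id}_{H\bullet A^{**}}$; since ${\rm id}_{H^\sim}$ is also such an extension and the extension is unique (see \ref{NN1} and Corollary~3.7 of \cite{Pa}), we conclude $\tilde\Psi_0({\rm id}_H)={\rm id}_{H^\sim}$.

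The only point that requires attention is recognizing that the homomorphism identity $\Psi_0(T)\Psi_0(E_\lambda)=\Psi_0(TE_\lambda)$ and the boundedness $\|\Psi_0(T)\|=\|T\|$ are precisely what the construction of $\Psi_0$ in Definition~\ref{DB(H)} already supplies; once those are invoked, the remaining steps are routine limit passages and I anticipate no genuine obstacle.
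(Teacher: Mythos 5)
Your proposal is correct and follows essentially the same route as the paper: \eqref{PPsi-1} from Lemma \ref{Lelambda} plus boundedness of $\Psi_0(T)$, the ``Moreover'' identity by passing to the limit in $\Psi_0(TE_\lambda)(x\hspace{-0.03in}\bullet\hspace{-0.03in}b)=TE_\lambda(x)\hspace{-0.03in}\bullet\hspace{-0.03in}b$ using Proposition \ref{PTid}, and the last claim via uniqueness of extensions (Corollary 3.7 of \cite{Pa}). You merely spell out the limit passages that the paper leaves implicit.
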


\begin{proof} 
The identity \eqref{PPsi-1} follows from immediately from Lemma \ref{Lelambda}.

Since 
\beq
\Psi_0(TE_\lambda)(x\hspace{-0.03in}\bullet\hspace{-0.03in} b)=TE_\lambda(x)\hspace{-0.03in}\bullet\hspace{-0.03in} b,
\eneq
by   \eqref{PPsi-1} and by  Lemma 3.1 of \cite{BL},
$$
\Psi_0(T)(x\hspace{-0.03in}\bullet\hspace{-0.03in} b)=T(x)\hspace{-0.03in}\bullet\hspace{-0.03in} b
$$
for all $x\in H$ and $b\in B.$

For the last part of the proposition,
we note that,  by considering the pair $A$ and $A^{**},$ and by the ``Moreover part" of the proposition, 
$\Psi_0({\rm id}_H)={\rm id}_{H\hspace{-0.02in}\bullet\hspace{-0.02in} A^{**}}.$ Therefore,  since the extension $\tilde \Psi_0({\rm id}_{H\hspace{-0.02in}\bullet\hspace{-0.02in} A^{**}})$ is unique 
(Corollary 3.7 of \cite{Pa}, see \ref{NN1} for convenience), we must have that $\tilde \Psi_0({\rm id}_H)={\rm id}_{H^\sim}.$ 
\end{proof}

The following is a slightly strengthened restatement of Proposition 2.3 of \cite{BL}.

\begin{prop}\label{PBL}
Let $A$ be a \CA\, and $H$ be a Hilbert $A$-module. 
Then there is a \hm\, $\Psi$ from $K(H)^{**}$ into  $B(H^\sim)$ 
such that $\Psi|_{B(H)}=\tilde \Psi_0.$  Moreover, if $T\in K(H)^{**}$ and $T_\lambda\in K(H)^{**}$ such that
$T_\lambda\to T$ in the weak*-topology, then 
\beq
\lim_{\lambda}f(\la \Psi(T_\lambda)(x), y\ra)=f(\la \Psi(T)(x), y\ra)\rforal x, y\in H^\sim \andeqn f\in A^*.
\eneq

\end{prop}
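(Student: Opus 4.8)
The plan is to build $\Psi$ as a normal (weak*-continuous) extension of the isometric homomorphism $\tilde\Psi_0\colon B(H)\to B(H^\sim)$ of Definition \ref{DB(H)}, exploiting the universal property of the bidual. First I would recall that $\tilde\Psi_0|_{K(H)}$ is a $*$-homomorphism $K(H)\to K(H^\sim)\subseteq B(H^\sim)$ into a $W^*$-algebra (Definition \ref{DHsim} records that $B(H^\sim)$ is a $W^*$-algebra, with the explicit predual $E_\sim\otimes E\otimes A_*$ and the characterization of its weak*-topology). By the universal property of the enveloping von Neumann algebra $K(H)^{**}$, the $*$-homomorphism $\tilde\Psi_0\colon K(H)\to B(H^\sim)$ extends uniquely to a normal $*$-homomorphism $\Psi\colon K(H)^{**}\to B(H^\sim)$, i.e. one that is weak*-to-weak* continuous on bounded sets. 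This immediately gives the ``Moreover'' clause: if $T_\lambda\to T$ weak* in $K(H)^{**}$ then $\Psi(T_\lambda)\to\Psi(T)$ weak* in $B(H^\sim)$, and by the description of the weak*-topology on $B(H^\sim)$ recalled in \ref{DHsim} (a bounded net $S_\af\to S$ iff $f(\la S_\af(x),y\ra)\to f(\la S(x),y\ra)$ for $x,y\in H^\sim$, $f\in A_*=A^*$ restricted), this is exactly $\lim_\lambda f(\la\Psi(T_\lambda)(x),y\ra)=f(\la\Psi(T)(x),y\ra)$ for all $x,y\in H^\sim$, $f\in A^*$. (One should be mildly careful that $T_\lambda$ is only assumed bounded implicitly or that one restricts to bounded nets; since a weak*-convergent net in a dual space is norm-bounded by uniform boundedness, this is automatic.)

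The one point that needs genuine checking is the compatibility statement $\Psi|_{B(H)}=\tilde\Psi_0$. Here I would use the identification $B(H)=LM(K(H))\subseteq K(H)^{**}$ from Theorem 1.5 of \cite{Linbd}, so that $B(H)$ sits inside $K(H)^{**}$ and it makes sense to restrict $\Psi$. For $T\in B(H)=LM(K(H))$ and an approximate identity $\{E_\lambda\}$ of $K(H)$, one has $TE_\lambda\to T$ in the left strict topology, and in particular $TE_\lambda\to T$ weak* in $K(H)^{**}$ (the left strict topology is stronger than, or at least compatible with, weak* convergence on the relevant bounded sets — this uses that $\{TE_\lambda\}$ is bounded by $\|T\|$). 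Applying normality of $\Psi$, $\Psi(T)=\mathrm{w}^*\text{-}\lim_\lambda\Psi(TE_\lambda)=\mathrm{w}^*\text{-}\lim_\lambda\tilde\Psi_0(TE_\lambda)$. On the other hand, by Definition \ref{DB(H)}, $\tilde\Psi_0(T)=\lim_\lambda\Psi_0(TE_\lambda)$ in the left strict topology of $LM(K(H^\sim\hspace{-0.02in}))$, hence also weak*; so $\Psi(T)$ and $\tilde\Psi_0(T)$ are both the weak* limit of the same net $\{\tilde\Psi_0(TE_\lambda)\}=\{\Psi_0(TE_\lambda)\}$, whence $\Psi(T)=\tilde\Psi_0(T)$.

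I expect the main obstacle to be purely a matching-of-topologies bookkeeping issue: verifying that ``left strict convergence of $TE_\lambda$ to $T$ in $LM(K(H))$'' and ``the weak*-topology of $B(H^\sim)$ as described via $E_\sim\otimes E\otimes A_*$'' line up correctly through the maps $\Psi_0$ and the inclusion $LM(K(H))\hookrightarrow K(H)^{**}$, so that the two candidate definitions of $\Psi(T)$ provably coincide on the nose rather than merely up to an unexamined identification. Concretely, the step to be careful about is that for $T\in B(H)$ the net $\Psi_0(TE_\lambda)(x)$ converges for each $x\in H^\sim$ in the appropriate weak sense and that this determines a genuine element of $B(H^\sim)$ equal to the normal extension evaluated at $T$; Proposition \ref{PElsharp} and Lemma \ref{Lelambda} (giving $\Psi_0(E_\lambda)x\to x$ for $x\in H\hspace{-0.03in}\bullet\hspace{-0.03in} A^{**}$, and the sandwich $\tilde\Psi_0(E_\lambda)T\tilde\Psi_0(E_\lambda)\to T$ weakly) are exactly the tools that make this routine once set up. Everything else — that $\Psi$ is a homomorphism, that it is contractive — is inherited from the universal property of $K(H)^{**}$ and the fact that normal $*$-homomorphisms out of a $W^*$-algebra into a $W^*$-algebra are automatically well-behaved.
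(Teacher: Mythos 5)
Your proposal follows essentially the same route as the paper: extend $\tilde\Psi_0|_{K(H)}$ to a normal homomorphism $\Psi$ on $K(H)^{**}$ via the universal property of the bidual (the paper makes this precise by passing through a faithful normal representation of the $W^*$-algebra $B(H^\sim)$), read off the ``Moreover'' clause from weak*-continuity and the description of the predual of $B(H^\sim)$, and then verify $\Psi|_{B(H)}=\tilde\Psi_0$ using $B(H)=LM(K(H))$ and an approximate identity.

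The one step that does not quite close as you have written it is the final identification. You assert that $\Psi_0(TE_\lambda)\to\Psi_0(T)$ in the left strict topology of $LM(K(H\hspace{-0.03in}\bullet\hspace{-0.03in} A^{**}))$ ``hence also weak*'' in $B(H^\sim)$; but the weak*-topology of $B(H^\sim)$ tests $f(\la S_\af(x),y\ra)$ against \emph{all} $x,y\in H^\sim$, whereas left strict convergence in $LM(K(H\hspace{-0.03in}\bullet\hspace{-0.03in} A^{**}))$ only controls the action on $H\hspace{-0.03in}\bullet\hspace{-0.03in} A^{**}$. So you cannot directly conclude that $\tilde\Psi_0(T)$ is the weak*-limit of the net in $B(H^\sim)$. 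The paper closes this by a different (and cleaner) device: it shows that $\Psi(T)$ and $\Psi_0(T)$ have the same inner products against elements of $H\hspace{-0.03in}\bullet\hspace{-0.03in} A^{**}$ (combining the weak*-convergence from normality with the norm convergence $\Psi_0(TE_\lambda)(x)\to\Psi_0(T)(x)$ of Lemma \ref{Lelambda}), hence $\Psi(T)|_{H\hspace{-0.01in}\bullet\hspace{-0.01in} A^{**}}=\Psi_0(T)$, and then invokes the \emph{uniqueness} of the extension of a bounded module map from $H\hspace{-0.03in}\bullet\hspace{-0.03in} A^{**}$ to the self-dual module $(H\hspace{-0.03in}\bullet\hspace{-0.03in} A^{**})^\sharp=H^\sim$ (end of \ref{NN1}, Corollary 3.7 of \cite{Pa}) to conclude $\Psi(T)=\tilde\Psi_0(T)$. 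You correctly flag this as the delicate point and cite the right auxiliary lemmas, so the fix is a one-line substitution of the uniqueness-of-extension argument for the topology comparison.
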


\begin{proof}
By  \ref{DHsim}, $B(H^\sim)=L(H^\sim)$ is a $W^*$-algebra (see Proposition 3.11 of \cite{Pa}).
Let $\pi: B(H^\sim)\to B(H_\pi)$ be a faithful normal representation
such that $\pi(B(H^\sim))$ is weakly closed in $B(H_\pi).$
 Then, by, for example, 
 Theorem  3.7.7 of \cite{Pedbook} and  Cor. 46.5 of \cite{Conway}, there is a normal \hm\, 
 $\Phi: K(H)^{**}\to B(H_\pi)$ such that $\Phi|_{K(H)}=\pi\circ {\tilde \Psi_0}|_{K(H)}$ 
 and $\pi\circ \tilde \Psi_0(K(H))$ is weakly dense in $\Phi(K(H)^{**}).$ Since $\pi(B(H^\sim))$ is a von Neumann algebra,
 $\Phi(K(H)^{**})\subset \pi(B(H^\sim)).$ Since $\pi$ is injective,  we may
 define $\Psi=\pi^{-1}\circ \Phi.$   Recall that $\pi^{-1}$ is an isomorphism 
 between $W^*$-algebras $\pi(B(H^\sim))$ and $B(H^\sim).$ 
 It follows that $\Psi$ is weak *-continuous. 
 Then, $\Psi|_{K(H)}=\pi^{-1}\circ {\pi\circ \tilde \Psi_0}|_{K(H)}={\tilde \Psi_0}|_{K(H)}.$

Let $V=B(H^\sim)_*$ be the pre-dual (as Banach spaces). Then  $\Psi$ induces a map
$\Psi^*: V\to K(H)^*,$ the pre-dual of $K(H)^{**},$  by
$L(\Psi^*(v))=\Psi(L)(v)$ for all $L\in (K(H)^*)^*$ and $v\in V.$
Thus if $T_\lambda\in K(H)^{**}$ such that $T_\lambda\to T$ in the weak *-topology in $K(H)^{**},$ then
$\Psi(T_\lambda)(v)=T_\lambda(\Psi^*(v))$ converges to $T(\Psi^*(v))=\Psi(T)(v)$
for all $v\in V.$ In other words, 
$\Psi(T_\lambda) \to \Psi(T)$ in the weak *-topology   in $V^*=B(H^\sim).$
By \ref{DHsim} (see  Remark 3.9  and proof of Theorem 3.10 of \cite{Pa}), this implies, in particular,  
for any $f\in A^*,$ $x, y\in H^\sim,$
$f(\la \Psi(T_\lambda((x), y\ra)\to f(\la \Psi(T)(x), y\ra).$ 

By  Theorem 1.5  of \cite{Linbd}, 
%
$B(H)=LM(K(H)).$ Let $\{E_\lambda\}$ be an approximate identity for $K(H).$
Then $TE_\lambda\in K(H)$ for all $T\in B(H).$ 
It follows from Proposition \ref{PPsi} that, for $T\in B(H),$  
\beq
\lim_{\lambda}\|\Psi(TE_\lambda)(f)(x)-\Psi(T)(f)(x)\|
=\lim_{\lambda}\|\Psi(T)\Psi(E_\lambda)(f)(x)-\Psi(T)(f)(x)\|=0
\eneq
for all $x\in H\hspace{-0.03in}\bullet\hspace{-0.03in} A^{**}$ and $f\in (H\hspace{-0.03in}\bullet\hspace{-0.03in}  A^{**})^\sharp.$ 
On the other hand,  by Proposition \ref{Lelambda}, 
\beq
\lim_{\lambda} \|\Psi_0(TE_\lambda)(x)-\Psi_0(T)(x)\|=0\rforal x\in H\hspace{-0.03in}\bullet\hspace{-0.03in} A^{**}.
\eneq
However, we have shown $\Psi(TE_\lambda)(y)=\tilde \Psi_0(TE_\lambda)(y)=\Psi_0(TE_\lambda)(y)$
for all $y\in H\hspace{-0.03in}\bullet\hspace{-0.03in} A^{**}$ (see also \eqref{DB(H)}).
Therefore, combining these three facts, for $x, y\in H\hspace{-0.03in}\bullet\hspace{-0.03in} A^{**},$  we obtain 
\beq
\la \Psi(T)(x), y\ra =\la \Psi_0(T)(x), y\ra.
\eneq
It follows that $\Psi(T)|_{H\hspace{-0.01in}\bullet\hspace{-0.01in} A^{**}}=\Psi_0(T).$
Since the extension of $\Psi_0(T)$ to a bounded module map on $(H\hspace{-0.03in}\bullet\hspace{-0.03in} A^{**})^\sharp$
is unique (see the end of \ref{NN1}, also   Lemma 3.5 of \cite{Lininj}), we have  $\Psi(T)=\tilde \Psi_0(T)$ for all $T\in B(H).$ Hence 
$$
\Psi|_{B(H)}=\tilde \Psi_0.
$$
 %
\end{proof}

\begin{df}\label{DFW}
Let $M$ be a $W^*$-algebra and $H$ be a Hilbert $M$-module.
Then, by 3.2 of \cite{Pa}, $H^\sharp$ is a  self-dual Hilbert $M$-module.
Let $F_0: F(H)\to F(H^\sharp)$ be the \hm\, 
defined by
$$
F_0(\theta_{x, y})(z)=x\la y, z\ra \rforal z\in H^\sharp\andeqn x, y\in H.
$$
Clearly $F_0$ is an isometry. It extends 
uniquely a \hm\, $F_0: K(H)\to K(H^\sharp).$ 
We  further extend $F: \widetilde{K(H)}\to \widetilde{K(H^\sharp)}$
by $F({\rm id}_H)={\rm id}_{H^\sharp}.$ 
\end{df}

\begin{prop}\label{s2-adp}
Let $M$ be a $W^*$-algebra and $H$ be a Hilbert 
$M$-module. Then there exists 
a unital normal \hm\, $F: K(H)^{**}\to B(H^\sharp)$ 
such that
$F|_{K(H)}=F_0$ and, if $T_\lambda\to T$ in weak*-topology of $W,$ then
\beq
\lim_{\lambda} f(\la F(T_\lambda)(x), y\ra)=f(\la F(T)(x), y\ra)
\eneq
for all $x, y\in H^\sharp$ and $f\in M_*,$ the pre-dual of $M.$
Moreover, $F(T)=\tilde T$ for all $T\in B(H)$ as defined in the later part of \ref{NN1}.
\end{prop}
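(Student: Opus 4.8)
The plan is to mimic the construction in Proposition \ref{PBL}, which is the same statement with the pair $(A, A^{**})$ replaced by $(M, M)$, i.e., with $H\hspace{-0.03in}\bullet\hspace{-0.03in}A^{**}$ replaced by $H$ itself and $H^\sim$ by $H^\sharp$. The three ingredients we need are: (i) $B(H^\sharp) = L(H^\sharp)$ is a $W^*$-algebra since $M$ is a $W^*$-algebra and $H^\sharp$ is self-dual (Proposition 3.11 of \cite{Pa}, as recalled in \ref{DHsim}); (ii) $F_0 : K(H) \to K(H^\sharp) \subset B(H^\sharp)$ is an isometric $*$-homomorphism (Definition \ref{DFW}); and (iii) the universal property of the enveloping von Neumann algebra $K(H)^{**}$. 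First I would fix a faithful normal representation $\pi : B(H^\sharp) \to B(H_\pi)$ with weakly closed image. The composition $\pi \circ F_0 : K(H) \to B(H_\pi)$ is a nondegenerate representation of the $C^*$-algebra $K(H)$ — nondegeneracy follows because an approximate identity $\{E_\lambda\}$ for $K(H)$ maps under $F_0$ to an approximate identity of $K(H^\sharp)$ (the same argument as Lemma \ref{Lelambda}, using that $\lim_\lambda\|E_\lambda(x)-x\|=0$ for $x\in H$ by Lemma 3.1 of \cite{BL}, combined with $\|F_0(E_\lambda)(f)(x)-f(x)\| = \|f(E_\lambda(x))-f(x)\| \le \|f\|\,\|E_\lambda(x)-x\|$ for $f\in H^\sharp\subset H^\sharp$). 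Hence by the universal property (Theorem 3.7.7 of \cite{Pedbook}, Cor. 46.5 of \cite{Conway}) there is a normal $*$-homomorphism $\Phi : K(H)^{**} \to B(H_\pi)$ extending $\pi\circ F_0$ with weakly dense range in $\pi(B(H^\sharp))$; since the latter is a von Neumann algebra, $\Phi(K(H)^{**}) \subseteq \pi(B(H^\sharp))$, and we set $F := \pi^{-1}\circ \Phi$. This is a normal $*$-homomorphism $K(H)^{**}\to B(H^\sharp)$ with $F|_{K(H)} = F_0$, and it is unital because $F(\operatorname{id}) = F(\lim_\lambda E_\lambda) = \lim_\lambda F_0(E_\lambda) = \operatorname{id}_{H^\sharp}$ in the weak$^*$-topology, consistent with the extension $F(\operatorname{id}_H)=\operatorname{id}_{H^\sharp}$ in \ref{DFW}.

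For the weak$^*$-continuity statement, I would argue exactly as in Proposition \ref{PBL}: $F$ being normal, it is the transpose of a bounded map $F_* : B(H^\sharp)_* \to K(H)^*$ on preduals, so $T_\lambda \to T$ weak$^*$ in $K(H)^{**}$ forces $F(T_\lambda)\to F(T)$ weak$^*$ in $B(H^\sharp)$. By the description of the predual of $B(H^\sharp) = L(H^\sharp)$ recalled at the end of \ref{DHsim} (Remark 3.9 and Proposition 3.10 of \cite{Pa}), weak$^*$-convergence $F(T_\lambda)\to F(T)$ in $B(H^\sharp)$ implies $g(\la F(T_\lambda)(x), y\ra) \to g(\la F(T)(x), y\ra)$ for all $x,y\in H^\sharp$ and $g\in M_*$, which is precisely the assertion (note the $W$ in the statement should read $K(H)^{**}$).

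The remaining, and I expect the most delicate, part is the identification $F(T) = \tilde T$ for all $T\in B(H)$, where $\tilde T$ is the canonical extension of $T\in B(H)$ to $H^\sharp$ from the later part of \ref{NN1}. Here I would use $B(H) = LM(K(H))$ (Theorem 1.5 of \cite{Linbd}), so for an approximate identity $\{E_\lambda\}$ of $K(H)$ one has $TE_\lambda \in K(H)$ and $TE_\lambda \to T$ strictly, hence $TE_\lambda \to T$ weak$^*$ in $K(H)^{**}$. By weak$^*$-continuity of $F$, $F(TE_\lambda) = F_0(TE_\lambda) \to F(T)$ weak$^*$ in $B(H^\sharp)$. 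On the other hand I must check that $\tilde T$, regarded inside the $W^*$-algebra $B(H^\sharp)$, is also the weak$^*$-limit of $F_0(TE_\lambda)$; for this I compute, for $x\in H$, $f\in H^\sharp$, that $F_0(TE_\lambda)(f)(x) = f(TE_\lambda(x))$ and $\tilde T(f)(x) = \la f, T^*(x)\ra$ via the formula in \ref{NN1}, and then verify $\lim_\lambda \la F_0(TE_\lambda)(f) - \tilde T(f), z\ra = 0$ weakly for $z$ running over $H\subset H^\sharp$, using $\lim_\lambda\|E_\lambda(x)-x\|=0$ for $x\in H$ and $\|E_\lambda\|\le 1$; since $H$ is $A$-weakly dense in $H^\sharp$ (Theorem 6.1 of \cite{BL}) and the nets are bounded, this extends to all $z\in H^\sharp$. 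Alternatively — and this is cleaner — I would invoke the uniqueness of the bounded-module-map extension from $H$ to $H^\sharp$ (Corollary 3.7 of \cite{Pa}, as recalled at the end of \ref{NN1}): both $F(T)|_H$ and $\tilde T|_H$ equal $T$ (the former because $F_0(TE_\lambda)|_H = TE_\lambda \to T$ in $B(H)$ by the strict/norm convergence, the latter by definition), so $F(T) = \tilde T$ by uniqueness. The main obstacle is keeping straight which topology governs each convergence — norm on $K(H)$, strict on $LM(K(H))$, weak$^*$ on $K(H)^{**}$ and on $B(H^\sharp)$ — and ensuring the self-duality of $H^\sharp$ is correctly used so that the uniqueness-of-extension result of \cite{Pa} applies; once that bookkeeping is in place the proof is a direct transcription of Proposition \ref{PBL} with $A^{**}$ replaced by $M$.
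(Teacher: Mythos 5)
Your proposal is correct and follows essentially the same route as the paper: construct $F$ from the universal property of $K(H)^{**}$ using a faithful normal representation of the $W^*$-algebra $B(H^\sharp)$, read off weak$^*$-continuity from the Paschke description of the predual of $B(H^\sharp)$, and identify $F(T)=\tilde T$ by checking $F(T)|_H=T$ via an approximate identity of $K(H)$ and then invoking uniqueness of the extension to $H^\sharp$ (Proposition 3.6 of \cite{Pa}). The only difference is cosmetic: the paper represents $B(H^\sharp)$ directly as a von Neumann algebra and cites Theorem 1.8.2 of \cite{Lnbk} to land in $\overline{F_0(K(H))}^{SOT}$, rather than passing through $\pi^{-1}\circ\Phi$ as in Proposition \ref{PBL}.
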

\begin{proof}
Recall that $B(H^\sharp)$ is a $W^*$-algebra. 
We may assume that $B(H^\sharp)$ acting on a Hilbert space $X$  as a von Neumann algebra 
with $1_{B(H^\sharp)}={\rm id}_X.$ 
Then, by Theorem 1.8.2 of \cite{Lnbk} (see also Theorem  3.7.7 of \cite{Pedbook}), there is a unital normal \hm\, 
$F: K(H)^{**}\to \overline{F_0(K(H))}^{SOT}\subset B(H^\sharp)$
such that $F|_{K(H)}=F_0.$ So $F$ is weak*-continuous (see, for example, Cor. 46.5 in \cite{Conway}). 

Suppose that  $T_\lambda\to T$ in the weak*-topology of $K(H)^{**},$
then  $F(T_\lambda)\to F(T)$ in the weak*-topology of $B(H^\sharp).$ 
Therefore (see the later part of Definition \ref{DHsim}, also, Remark 3.9 and  the proof of Proposition 3.9 of \cite{Pa}), 
\beq
f(\la F(T_\lambda)(x), y\ra)\to f(\la F(T)(x), y\ra)
\rforal x, y\in H^{\sharp} \andeqn f\in M_*.
\eneq 
%
Let $\{E_\lambda\}$ be an approximate identity for $K(H).$ 
Then, for any $T\in B(H),$ by Lemma 3.1 of \cite{BL}, 
\beq
\lim_\lambda \|F(T)F(E_\lambda)(x)-F(T)(x)\|=\lim_\lambda \|F(T)E_\lambda(x)-F(T)(x)\|=0\rforal x\in H.
\eneq
On the other hand, since $F(T)F(E_\lambda)|_H=F(TE_\lambda)|_H=TE_\lambda,$  and  
\beq
\lim_\lambda \|TE_\lambda(x)-T(x)\|=0,
\eneq
we conclude that
\beq
T(x)=F(T)(x)\rforal x\in H.
\eneq
Since the extension of $T$ to $H^\sharp$ is unique (by Proposition 3.6 of \cite{Pa},
see also Lemma 3.5 of \cite{Lininj}),  $\tilde T=F(T).$ 
\end{proof}

\section{Isomorphism of $B(H^\sim)$  and $K(H)^{**}$ }

Let $A$ be a monotone \CA\, and $H$ be a Hilbert $A$-module.
Then, by Lemma 3.7 of \cite{Lininj}, $H^\sharp$  becomes 
a  self-dual  Hilbert 
$A$-module such that $\la \tau, x\ra=\tau(x)$
for all $x\in H$ and $\tau\in H^\sharp.$   Note that, if $E$ is self-dual,
we map $E^\sharp$ onto $E$ conjugately 
just as in the case of Hilbert spaces.

We will apply the following lemma several times.

\begin{prop}\label{ext}
Let $A$ be a monotone \CA, $H_1\subset H_2$ be 
Hilbert $A$-modules such that $H_2$ is self-dual.
Then $H_1^\sharp$ is an orthogonal summand of
$H_2^\sharp$ and the embedding $H_1^\sharp\to H_2^\sharp$ extends the embedding 
$H_1\subset H_2.$
\end{prop}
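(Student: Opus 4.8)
The plan is to realize $H_1^\sharp$ inside $H_2^\sharp$ as the range of a self-adjoint idempotent, obtained from the adjoint of the obvious restriction map. First I would reduce to the case that $H_1$ is norm-closed in $H_2$: replacing $H_1$ by its closure changes neither $H_1^\sharp$ (bounded module maps extend uniquely and isometrically over the norm closure) nor the statement. Since $A$ is monotone complete, both $H_1^\sharp$ and $H_2^\sharp$ are self-dual Hilbert $A$-modules with $\la \tau, x\ra=\tau(x)$ (Lemma 3.7 of \cite{Lininj}); and because $H_2$ is already self-dual, I identify $H_2^\sharp$ with $H_2$ through the canonical conjugate identification, viewing $H_1\subseteq H_1^\sharp$ and $H_1\subseteq H_2\subseteq H_2^\sharp$ compatibly. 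Let $r\colon H_2^\sharp\to H_1^\sharp$ be the restriction map $r(f)=f|_{H_1}$; it is a bounded $A$-module map with $\|r\|\le 1$, given under the above identification by $z\mapsto \la z,\cdot\ra|_{H_1}$.

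The crucial point is that $r$ is adjointable, precisely because its domain $H_2^\sharp$ is self-dual: for each $h\in H_1^\sharp$ the functional $g\mapsto \la h, r(g)\ra$ on $H_2^\sharp$ is a bounded module map into $A$, hence is represented by a unique element $r^*(h)\in H_2^\sharp$, and this produces $r^*\colon H_1^\sharp\to H_2^\sharp$ with $\la r(g),h\ra=\la g, r^*(h)\ra$ and $r^{**}=r$ (the standard representability argument for self-dual modules; compare \ref{NN1} and 3.4 of \cite{Pa}). I would then evaluate on $H_1$: for $x\in H_1$ and any $z\in H_2$,
$$
\la z, r^*(x)\ra_{H_2^\sharp}=\la r(z),x\ra_{H_1^\sharp}=r(z)(x)=\la z,x\ra_{H_2},
$$
and since $z$ is arbitrary this forces $r^*(x)=x$ in $H_2$; thus $r^*|_{H_1}$ is the inclusion $H_1\subseteq H_2\cong H_2^\sharp$, and consequently $r r^*(x)=r(x)=\la x,\cdot\ra|_{H_1}=x$ in $H_1^\sharp$ for every $x\in H_1$.

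It remains to upgrade ``$rr^*={\rm id}$ on $H_1$'' to ``$rr^*={\rm id}_{H_1^\sharp}$''. Here I would invoke that $H_1^\sharp$ is self-dual and that a bounded module map from $H_1$ into a self-dual module has at most one extension to $H_1^\sharp$ (Corollary 3.7 of \cite{Pa}; see also Lemma 3.5 of \cite{Lininj}): since $rr^*$ and ${\rm id}_{H_1^\sharp}$ are both such extensions of the embedding $H_1\hookrightarrow H_1^\sharp$, they coincide. Then $r^*$ is isometric, $\la r^*(h),r^*(h')\ra=\la h,h'\ra$, so $r$ is surjective and $e:=r^*r$ is a self-adjoint idempotent in $L(H_2^\sharp)$ with $e(H_2^\sharp)=r^*(r(H_2^\sharp))=r^*(H_1^\sharp)$. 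Hence $H_2^\sharp=r^*(H_1^\sharp)\oplus(1-e)(H_2^\sharp)$ is an orthogonal direct sum, $r^*$ is a module isomorphism of $H_1^\sharp$ onto the orthogonal summand $r^*(H_1^\sharp)$, and $r^*$ restricts on $H_1$ to the given embedding $H_1\subseteq H_2$, which is exactly the assertion.

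I expect the only real obstacle to be the step in the last paragraph — propagating $rr^*={\rm id}$ from the dense submodule $H_1$ to all of $H_1^\sharp$ — which is where the monotone completeness of $A$ genuinely enters (through self-duality of $H_1^\sharp$ and uniqueness of extensions). Everything else is routine, the only mild annoyance being the conjugate-linearity implicit in the identification $H_2^\sharp\cong H_2$ and in the $A$-module structure of dual modules, which is bookkeeping handled as in \cite{Pa}.
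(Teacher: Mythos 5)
Your proof is correct, and the objects you build are the same ones the paper uses: your restriction map $r$ is the paper's $P_0\colon H_2\to H_1^\sharp$, $P_0(y)(x)=\la y,x\ra$, and your $r^*$ is the paper's section $j$, $j(x)(y)=\la x,P_0(y)\ra$. The only real divergence is in how the identity $rr^*={\rm id}_{H_1^\sharp}$ and the surjectivity of $r$ are obtained. The paper proves surjectivity of $P_0$ up front from the extension theorem for monotone complete $A$ (Theorem 3.8 of \cite{Lininj}: every $\tau\in H_1^\sharp$ extends to $H_2^\sharp=H_2$ with the same norm), and then verifies $P_0\circ j={\rm id}_{H_1^\sharp}$ by a direct computation valid for \emph{every} $x\in H_1^\sharp$ — no uniqueness-of-extension step is needed, because elements of $H_1^\sharp$ are functionals on $H_1$, so it suffices to evaluate $P_0(j(x))$ on $H_1$. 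You instead check $rr^*={\rm id}$ only on $H_1$ and then upgrade via Corollary 3.7 of \cite{Pa}; this is legitimate (the target $H_1^\sharp$ is self-dual, so extensions from $H_1$ are unique), but it is a detour: evaluating $rr^*(h)$ at $y\in H_1$ for arbitrary $h\in H_1^\sharp$ gives $rr^*(h)(y)=\la h,r(y)\ra=\la h,y\ra=h(y)$ outright. A pleasant by-product of your arrangement is that the norm-preserving extension theorem falls out as a corollary ($r^*(\tau)$ extends $\tau$ isometrically) rather than being consumed as an input, at the cost of leaning on adjointability of maps out of the self-dual module $H_2^\sharp$ — which is where monotone completeness still enters, through the Hilbert $A$-module structure and self-duality of the duals (Lemma 3.7 of \cite{Lininj}).
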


\begin{proof}

Define $P_0: H_2\to H_1^\sharp$ by
\beq\label{ext-1}
P_0(y)(x)=\la y, x\ra \rforal y\in H_2\andeqn x\in H_1.
\eneq
It is a bounded module  map (by viewing $H_1^\sharp$ as a Hilbert module 
instead of the dual to avoid the conjugation) with $\|P_0\|=1.$ 
Note that ${P_0}|_{H_1}=\id_{H_1}.$

Let $\tau\in H_1^\sharp.$ Since $A$ is monotone complete, by
Theorem 3.8 of \cite{Lininj}, there is $\tilde \tau\in H_2^\sharp$
such that $\tilde \tau|_{H_1}=\tau$ and $\|\tilde \tau\|=\|\tau\|.$ 
This implies that $P_0$ is surjective.

Define $j: H_1^\sharp\to H_2^\sharp=H_2$ by
\beq\label{ext-2}
j(x)(y)=\la x, P_0(y)\ra \rforal x\in H_1^\sharp\andeqn y\in H_2.
\eneq
Then $j$ extends the embedding $H_1\subset H_2.$
Now, for $x\in H_1^\sharp$ and $y\in   H_2,$ by \eqref{ext-1} and \eqref{ext-2},
\beq
P_0\circ j(x)(y)&=&P_0(j(x))(y)=\la j(x), y\ra =\la x, P_0(y)\ra\\
&=& \la P_0(y), x\ra^*=(P_0(y)(x))^*\\
&=&\la y, x\ra^*
= \la x, y\ra.
\eneq
It follows that $P_0\circ j=\id|_{H_1^\sharp}.$ 
Thus $j: H_1^\sharp\to H_2$ is an emebeding.
With identification of $H_1^\sharp$ and $j(H_1^\sharp),$ 
${P_0}|_{H_1^\sharp}=\id|_{H_1^\sharp}.$ It follows that 
$P_0$ is a  projection and $H_1^\sharp$ is an orthogonal summand of $H_2.$
\end{proof}

Applying Lemma \ref{ext} and Proposition \ref{Psmall}, we obtain the  following characterization of $H^\sim.$

\begin{prop}\label{Psmall2}
Let $A$ be a \CA\, and $H$ be a Hilbert $A$-module.
Then $H^\sim$ is the smallest self-dual Hilbert $A^{**}$-module
containing $H$ as a Hilbert $A$- submodule.
\end{prop}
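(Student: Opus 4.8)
The plan is to prove that $H^\sim = (H\hspace{-0.03in}\bullet\hspace{-0.03in} A^{**})^\sharp$ is the smallest self-dual Hilbert $A^{**}$-module containing $H$ as a Hilbert $A$-submodule. First I would make precise what ``smallest'' means: if $E$ is any self-dual Hilbert $A^{**}$-module together with an $A$-module isometric embedding $\kappa: H \to E$ (i.e. $\la \kappa(x), \kappa(y)\ra = \la x, y\ra$ for $x, y\in H$), then the embedding extends to an isometric Hilbert $A^{**}$-module embedding $H^\sim \to E$ whose image is an orthogonal summand of $E$. We already know from \ref{DHsim} that $H^\sim$ is itself a self-dual Hilbert $A^{**}$-module containing $H$, so it remains to establish the universal/minimality property.

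The key steps, in order: (1) Apply Proposition \ref{Psmall} with the pair $A \subset A^{**}$ (note $A^{**}$ is unital and, by the convention in \ref{A11} together with \ref{MMSOT}, $1_{\tilde A} = 1_{A^{**}}$) to the embedding $\kappa: H \to E$. Since $E$ is in particular a Hilbert $A^{**}$-module, Proposition \ref{Psmall} produces a unique $A^{**}$-module embedding $\tilde\kappa: H\hspace{-0.03in}\bullet\hspace{-0.03in} A^{**} \to E$ extending $\kappa$ and preserving inner products. (2) Now pass to duals. Since $\tilde\kappa$ is an inner-product-preserving $A^{**}$-module map with closed range, the range $\tilde\kappa(H\hspace{-0.03in}\bullet\hspace{-0.03in} A^{**})$ is a Hilbert $A^{**}$-submodule of the self-dual module $E$, and $A^{**}$ is a $W^*$-algebra, hence monotone complete. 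Apply Proposition \ref{ext} with $H_1 = \tilde\kappa(H\hspace{-0.03in}\bullet\hspace{-0.03in} A^{**}) \subset H_2 = E$: this gives that $H_1^\sharp$ is an orthogonal summand of $E^\sharp = E$ and that the embedding $H_1^\sharp \to E$ extends $H_1 \subset E$. (3) Identify $H_1^\sharp$ with $(H\hspace{-0.03in}\bullet\hspace{-0.03in} A^{**})^\sharp = H^\sim$ via the isometric isomorphism induced by $\tilde\kappa$ (a surjective inner-product-preserving $A^{**}$-module map induces an isomorphism of the dual modules). Chasing through the identifications, the composite $H^\sim \cong H_1^\sharp \hookrightarrow E$ is an isometric Hilbert $A^{**}$-module embedding with orthogonally complemented range that extends the original $H \subset H^\sim$ and the original $\kappa: H \to E$. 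This is exactly the minimality statement.

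I would also remark that uniqueness of the embedding follows from the uniqueness clause in Proposition \ref{Psmall} together with Corollary 3.7 of \cite{Pa} (as recalled in \ref{NN1}): two extensions of $\kappa$ agreeing on $H$ must agree on $H\hspace{-0.03in}\bullet\hspace{-0.03in} A^{**}$, hence on its dual by the uniqueness of extensions of bounded module maps to the self-dual completion.

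The main obstacle I anticipate is step (2)--(3): making sure the identification of $(H\hspace{-0.03in}\bullet\hspace{-0.03in} A^{**})^\sharp$ with the dual of its isometric copy inside $E$ is carried out compatibly with the module and inner-product structure, and that Proposition \ref{ext} genuinely applies — this requires knowing that $\tilde\kappa(H\hspace{-0.03in}\bullet\hspace{-0.03in} A^{**})$ is norm-closed in $E$ (it is, since $\tilde\kappa$ is an isometry and $H\hspace{-0.03in}\bullet\hspace{-0.03in} A^{**}$ is complete) and that $A^{**}$ is monotone complete (true, as it is a von Neumann algebra). The bookkeeping of conjugate-linear identifications $E^\sharp \cong E$ is where sign/conjugation errors could creep in, but structurally everything is already in place from Propositions \ref{Psmall} and \ref{ext}.
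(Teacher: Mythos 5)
Your proposal is correct and follows essentially the same route as the paper: apply Proposition \ref{Psmall} to obtain $H\subset H\hspace{-0.03in}\bullet\hspace{-0.03in}A^{**}\subset E$, then apply Proposition \ref{ext} (using that $E$ is self-dual over the monotone complete algebra $A^{**}$) to conclude $H^\sim=(H\hspace{-0.03in}\bullet\hspace{-0.03in}A^{**})^\sharp\subset E^\sharp=E$. Your extra care with the explicit embeddings $\kappa$, $\tilde\kappa$ and the uniqueness remark is sound but not a different argument.
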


\begin{proof}
Let $H_1$ be a self-dual Hilbert $A^{**}$-module containing $H$ as a Hilbert $A$-submodule.
Then, by Proposition \ref{Psmall}, 
\beq
H\subset H\hspace{-0.02in}\bullet \hspace{-0.02in} A^{**}\subset H_1.
\eneq
Applying Lemma \ref{ext},  since $H_1$ is self-dual, 
\beq
H^\sim=(H\hspace{-0.02in}\bullet \hspace{-0.02in} A^{**})^\sharp\subset H_1^\sharp=H_1.
\eneq
The proposition follows.
\end{proof}

\begin{NN}
{\rm In the next proposition, let $A$ be a \CA,  $H_1\subset H$ be Hilbert $A$-modules.
Then, by Proposition \ref{Psmall}, $H_1\hspace{-0.03in}\bullet\hspace{-0.03in} A^{**}\subset H\hspace{-0.03in}\bullet\hspace{-0.03in} A^{**}.$ 
Since $A^{**}$ is monotone complete, and 
$(H\hspace{-0.03in}\bullet\hspace{-0.03in} A^{**})^\sharp=H^\sim$ and $(H_1\hspace{-0.03in}\bullet\hspace{-0.03in} A^{**})^\sharp=H_1^\sim,$   by Lemma \ref{ext}, we may write 
$H^\sim =H_1^\sim \oplus (H_1^\sim)^\perp.$ Denote 
by $P: H^\sim \to H_1^\sim $ the projection.  Note that $P\in L(H^\sim).$
By Lemma 3.2 of \cite{Lininj}, $K(H_1)$ is a hereditary \SCA\, of $K(H).$
 Let  $\Psi_H: K(H)^{**}\to B(H^\sim)$ 
and  $\Psi_1: K(H_1)^{**}
\to B(H_1^\sim)$ 
 be the  
\hm s given by Proposition \ref{PBL}, respectively.}
\end{NN}

\begin{prop}\label{Ladd}
Use the notation above,  we have
that
$$
\Psi_1={\Psi_H}|_{K(H_1)^{**}}=P\Psi_HP |_{K(H_1)^{**}},
$$ in particular, $\Psi_1(T)=\Psi_H(T)|_{H_1^\sim}=P\Psi_H(T)P|_{H_1^\sim }$
for  $T\in K(H_1)^{**}.$ Moreover,
$$P\Psi_H(L)P|_{K(H_1)^{**}}\subset 
\Psi_1(K(H_1)^{**})\rforal L\in K(H)^{**}.$$ 
Furthermore, 
$
\Psi(Q)=P,$ where $Q$ is the open projection in $K(H)^{**}$ corresponding to the hereditary \SCA\, $K(H_1).$ 
\end{prop}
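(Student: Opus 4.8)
The plan is to push everything down to the finite rank maps $\theta_{x,y}$ with $x,y\in H_1$ and then spread the resulting identities by weak$^*$-continuity. First I would make explicit the identifications already set up in the paragraph preceding the statement: by Proposition~\ref{Psmall} and Lemma~\ref{ext}, $H_1^\sim$ is the orthogonal summand $PH^\sim$ of $H^\sim$, the embedding $H_1^\sim\hookrightarrow H^\sim$ restricts on $H_1$ to $H_1\hookrightarrow H\hookrightarrow H^\sim$, and the $A^{**}$-valued inner product of $H^\sim$ restricts to that of $H_1^\sim$; also $K(H_1)^{**}=QK(H)^{**}Q$ canonically with $Q=1_{K(H_1)^{**}}$, and $F(H_1)$ is weak$^*$-dense in $K(H_1)^{**}$.

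Next, for $x,y\in H_1$ and $\zeta\in H^\sim$, I would compute $\Psi_H(\theta_{x,y})(\zeta)=x\la y,\zeta\ra$ (Definition~\ref{Dmap2} together with Proposition~\ref{PBL}); for $\zeta\in H_1^\sim$ this equals $\Psi_1(\theta_{x,y})(\zeta)$ and lies in $H_1^\sim$, while for $\zeta\perp H_1^\sim$ it vanishes because $\la y,\zeta\ra=0$ (as $y\in H_1^\sim$). Hence $\Psi_H(\theta_{x,y})=P\,\Psi_H(\theta_{x,y})\,P$ and $\Psi_H(\theta_{x,y})|_{H_1^\sim}=\Psi_1(\theta_{x,y})$, and by linearity the same two identities hold on all of $F(H_1)$. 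Then I would pass to $K(H_1)^{**}$: the maps $T\mapsto\Psi_H(T)$, $T\mapsto P\Psi_H(T)P$ and $T\mapsto\Psi_1(T)$ on $K(H_1)^{**}$ are all weak$^*$-continuous, because $\Psi_H$ and $\Psi_1$ are weak$^*$-continuous by Proposition~\ref{PBL}, the inclusion $K(H_1)^{**}\hookrightarrow K(H)^{**}$ is the bidual of the injective $\hm$ $K(H_1)\hookrightarrow K(H)$, and left and right multiplication by $P$ on the $W^*$-algebra $B(H^\sim)$ (followed, where needed, by the $W^*$-isomorphism $PB(H^\sim)P\cong B(H_1^\sim)$) is weak$^*$-continuous. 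Since these maps agree on the weak$^*$-dense set $F(H_1)$, I obtain $\Psi_H(T)=P\Psi_H(T)P$ and $\Psi_1(T)=\Psi_H(T)|_{H_1^\sim}=P\Psi_H(T)P|_{H_1^\sim}$ for every $T\in K(H_1)^{**}$, which is the first assertion.

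For $\Psi_H(Q)=P$ I would specialize this to $T=Q=1_{K(H_1)^{**}}$: since $\Psi_1$ is a unital $\hm$ (Proposition~\ref{PPsi}) we get $\Psi_1(Q)=\id_{H_1^\sim}$, hence $\Psi_H(Q)=P\Psi_H(Q)P$ and $\Psi_H(Q)|_{H_1^\sim}=\id_{H_1^\sim}$; an operator on $H^\sim$ with those two properties must equal $P$. Finally, for the inclusion, given $L\in K(H)^{**}$ I would use multiplicativity of $\Psi_H$ and $\Psi_H(Q)=P$ to write $P\Psi_H(L)P=\Psi_H(Q)\Psi_H(L)\Psi_H(Q)=\Psi_H(QLQ)$, note that $QLQ\in QK(H)^{**}Q=K(H_1)^{**}$, and conclude from the first part that $P\Psi_H(L)P|_{H_1^\sim}=\Psi_1(QLQ)\in\Psi_1(K(H_1)^{**})$.

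I do not expect a genuine obstacle here: the mathematical content is the one-line finite rank computation plus a routine ``agree on a weak$^*$-dense subset, hence everywhere'' argument. The step that needs the most care is the bookkeeping around the identifications $H_1^\sim=PH^\sim$, $K(H_1)^{**}=QK(H)^{**}Q$ and $Q=1_{K(H_1)^{**}}$, and in particular verifying that the inclusion $K(H_1)^{**}\hookrightarrow K(H)^{**}$ and the compression $B(H^\sim)\to B(H_1^\sim)$ are weak$^*$-continuous, since these are precisely the hypotheses that make the density argument close.
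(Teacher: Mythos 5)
Your proof is correct and follows essentially the same strategy as the paper's: verify the identities on a dense subalgebra by explicit computation and spread them by the weak$^*$-continuity supplied by Proposition \ref{PBL}. The differences are matters of streamlining rather than of route. Where you compute directly on $F(H_1)$ using the formula $\Psi_H(\theta_{x,y})(\zeta)=x\la y,\zeta\ra$ on all of $H^\sim$, the paper verifies the base case on $K(H_1)$ via Proposition \ref{PTid} (which only gives the formula on $H_1\hspace{-0.03in}\bullet\hspace{-0.03in} A^{**}$) and then invokes uniqueness of extensions of bounded module maps to $H_1^\sim$ (Corollary 3.7 of \cite{Pa}); both are legitimate, and yours avoids one appeal to the extension lemma at the cost of needing the formula for $\tilde\Psi_0$ on $H^\sim$ from Definition \ref{Dmap2}. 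For $\Psi(Q)=P$ the paper runs an approximate-identity net $q_\lambda\nearrow Q$ and computes limits of $f(\la\Psi(q_\lambda)(y),z\ra)$, whereas you simply specialize the already-established identity to $T=Q=1_{K(H_1)^{**}}$ and use unitality of $\Psi_1$ (Proposition \ref{PPsi}); your version is shorter and is sound because the first part was proved for all of $K(H_1)^{**}$, including the unit. Likewise, for the final inclusion you use multiplicativity together with $\Psi_H(Q)=P$ to write $P\Psi_H(L)P=\Psi_H(QLQ)$, which replaces the paper's second net argument. The one place that genuinely needs the care you flag is the weak$^*$-continuity of $T\mapsto\Psi_H(T)$ and $T\mapsto P\Psi_H(T)P$ on $K(H_1)^{**}$: this rests on identifying $K(H_1)^{**}$ with the corner $QK(H)^{**}Q$ (using that $K(H_1)$ is hereditary in $K(H)$, Lemma 3.2 of \cite{Lininj}), on the normality of two-sided multiplication by $P$ in the $W^*$-algebra $B(H^\sim)$, and on the restriction map $PB(H^\sim)P\to B(H_1^\sim)$ being a normal $*$-isomorphism; all of these hold, so the density argument closes.
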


\begin{proof}

Denote by $\Psi_{K(H),0}$ the injective \hm\,  from $K(H)$ into 
$K(H\hspace{-0.03in}\bullet\hspace{-0.03in} A^{**}),$ 
$\Psi_{K(H_1),0}$ the injective \hm\, 
from $K(H_1)$ into $K(H_1\hspace{-0.03in}\bullet\hspace{-0.03in} A^{**})$ described in \ref{Dmap2}, respectively. 

Fix $S\in K(H_1).$
For each $x\in H_1$ and $b\in A^{**},$
by \ref{PTid}, 
\beq
&&\Psi_{K(H),0}(S)(x\hspace{-0.03in}\bullet\hspace{-0.03in} b)=S(x)\hspace{-0.03in}\bullet\hspace{-0.03in} b\andeqn\\
&&P\Psi_{K(H),0}(S)P(x\hspace{-0.03in}\bullet\hspace{-0.03in} b)=P(S(x\hspace{-0.03in}\bullet\hspace{-0.03in} b))=S(x)\hspace{-0.03in}\bullet\hspace{-0.03in} b=\Psi_{K(H_1),0}(S)(x\hspace{-0.03in}\bullet\hspace{-0.03in} b).
\eneq
It follows that 
$$
\Psi_{K(H),0}(S)|_{H_1\hspace{-0.01in}\bullet\hspace{-0.01in} A^{**}}=P\Psi_{K(H),0}(S)P|_{H_1\hspace{-0.01in}\bullet\hspace{-0.01in} A^{**}}=\Psi_{K(H_1),0}(S).
$$
Since the extensions of $\Psi_{K(H),0}(S)|_{H_1\hspace{-0.01in}\bullet\hspace{-0.01in} A^{**}}$ and $\Psi_{K(H_1),0}(S)$ to bounded module maps on $H_1^\sim$ 
are unique, and,  $\Psi(S)|_{H_1^\sim}$  and $\Psi(S)$ are corresponding extensions, by Corollary 3.7 of \cite{Pa},
we conclude that 
$\Psi(S)|_{H_1^\sim}=P\Psi_H(S)P|_{H_1^\sim}=\Psi_1(S).$ 

Let $T\in K(H_1)^{**}$ and $\{T_\lambda\}\subset K(H_1)$ be a net 
such that $T_\lambda\to T$ in the weak*-topology. 
By Proposition \ref{PBL}, 
for any $g\in A^*,$
\beq\label{Ladd-08}
&&\lim_{\lambda}|g(\la \Psi_H(T_\lambda)(x), y\ra)-g(\la \Psi_H(T)(x), y\ra)|=0
\andeqn\\\label{Ladd-09}
&&\lim_{\lambda}|g(\la \Psi_1(T_\lambda)(x), y\ra)-g(\la \Psi_1(T)(x), y\ra)|=0
\eneq
for all $x, y\in H_1^\sim.$
Since  we have shown that $\Psi_H(T_\lambda)|_{H_1^\sim}=P\Psi_H(T_\lambda)P|_{H_1^\sim}=\Psi_1(T_\lambda),$ 
we conclude that
\beq\label{Ladd-10}
\Psi_H(T)|_{H_1^\sim}=P\Psi_H(T)P|_{H_1^\sim}=\Psi_1(T).
\eneq
Hence 
\beq
\Psi_1=P\Psi_HP |_{K(H_1)^{**}}=\Psi_H|_{K(H_1)^{**}}.
\eneq

Let $\{q_\lambda\}$ be an approximate identity for 
$K(H_1).$
Then $q_\lambda\nearrow {\rm id}_{H_1}\in K(H_1)^{**}.$
It follows from Proposition \ref{PBL} that
\beq
\lim_\lambda f(\la \Psi_1(q_\lambda(y)), z\ra)=f(\la y, z\ra)\rforal y, z\in H_1^\sim\andeqn f\in A^{*}.
\eneq

On the other hand, we also have that  $q_\lambda\nearrow Q$ in $K(H)^{**}.$
By Lemma  \ref{ext}, $H^\sim=H_1^\sim \oplus (H_1^\sim)^\perp.$
Note that $q_\lambda(x)\in H_1$ for all $x\in H.$
Then, for $x\in H,$ $b\in A^{**},$ and $g\in (H_1^\sim)^\perp,$   by Proposition \ref{PTid}, 
\beq
\la \Psi_H(q_\lambda) (x\hspace{-0.03in}\bullet\hspace{-0.03in} b), g\ra= g(q_\lambda(x\hspace{-0.03in}\bullet\hspace{-0.03in} b))^* =g(q_\lambda(x)\hspace{-0.03in}\bullet\hspace{-0.03in} b)^*=0.
\eneq
It follows that, for any $y\in H\hspace{-0.03in}\bullet\hspace{-0.03in} A^{**}$  and $g\in (H_1^\sim)^\perp,$
\beq
\la \Psi_H(q_\lambda)(y), g\ra=0.
\eneq
Hence, for $g\in (H_1^\sim)^\perp,$ 
\beq
\la y, \Psi_H(q_\lambda)(g)\ra=0\rforal y\in H\hspace{-0.03in}\bullet\hspace{-0.03in} A^{**}.
\eneq
It follows that $\Psi_H(q_\lambda)(g)=0$ and 
\beq
\la \Psi_H(q_\lambda)(z), g\ra=\la z, \Psi_H(q_\lambda)(g)\ra=0\rforal z\in H^\sim.
\eneq
In other words, $\Psi_H(q_\lambda)(z)\in H_1^\sim$ for all $z\in H$ and $\lambda.$
Therefore
\beq
P\Psi_H(q_\lambda)=\Psi_H(q_\lambda)=\Psi_H(q_\lambda) P.
\eneq
Note that $Pz\in H_1^\sim$ for any $z\in H^\sim.$
Thus, by \eqref{Ladd-10} and \eqref{Ladd-09},
\beq
 \lim_{\lambda} f(\la \Psi(q_\lambda)(y), z\ra)&=&
 \lim_{\lambda}f(\la \Psi(q_\lambda)(P(y)), P(z)\ra\\
 &=&\lim_\lambda f(\la \Psi_1(q_\lambda)(P(y)), P(z)\ra\\
&=&  f(\la P(y), P(z)\ra)=f(\la P(y), z\ra).
\eneq
By \eqref{Ladd-08} and \eqref{Ladd-10}, $\lim_{\lambda} f(\la \Psi(q_\lambda)(y), z\ra)=f(\la \Psi(Q)(y), z\ra).$
Therefore 
\beq
\Psi(Q)=P.
\eneq
This proves the ``Furthermore" part.   In what follows we will identify $Q$ with $P$ as well as 
$\Psi(Q)$ and $\Psi(P).$

Now let $L\in K(H)^{**}$ and $\{L_\lambda\}\subset K(H)$
be a net such that $L_\lambda\to L$ in weak *-topology.
By Proposition \ref{PBL}, 
for any $g\in A^*,$ $x, y\in H_1^\sim,$ 
\beq
&&\lim_{\lambda}|g(\la \Psi_H(T_\lambda )(x), y\ra)-g(\la \Psi_H(T)(x), y\ra)|=0
\andeqn\\
&&\lim_{\lambda}|g(\la \Psi_1(T_\lambda )(x), y\ra)-g(\la \Psi_1(T)(x), y\ra)|=0
\eneq
(note that $\Psi_1(T_\lambda)=P\Psi_1(T_\lambda)P$).
We also have, for any $x, y\in H_1^\sim,$  
\beq
&& \la \Psi_H(PT_\lambda P)(x), y\ra=\la \Psi_H(T_\lambda)(x), y\ra,\\
&&\la P\Psi_H(T)P(x), y\ra=\la \Psi_H(T)(x), y\ra,\\
\eneq
Since $PT_\lambda P\in K(H_1)^{**},$ 
by the first part of the lemma, 
$\Psi_H(PT_\lambda P)(x)=\Psi_1(PT_\lambda P)(x)$
for $x\in H_1^\sim.$ 
It follows that $P\Psi_H(T)P(x)=\Psi_1(PTP)(x)$
for all $x\in H_1^\sim.$ Then $P\Psi_H(T)P=\Psi_1(PTP)\in \Psi_1(K(H_1)^{**}).$
%
\end{proof}

\begin{NN}\label{DAsharp}
{\rm Let $A$ be a \CA\, and let, for $n\in \N,$ 
$H_n=A^{(n)}=\{(a_1, a_2,...,a_n): a_j\in A, 1\le j\le n\},$
the direct sum of  $n$-copies of $A,$ 
where 
$\la a, b\ra=\sum_{j=1}^n a_j^*b_j,$
if $a=(a_1, a_2,...,a_n)$ and $b=(b_1, b_2,...,b_n).$ 
Let 
\beq
H_A=\{\{a_n\}: a_n\in A\andeqn \sum_{i=1}^n a_k^*a_k\,\,\,{\rm  converges\,\, in\,\, norm}\}
\eneq
be the standard  countably generated Hilbert (right) $A$-module.
Note that 
\beq
\la \{a_n\}, \{b_n\}\ra =\sum_{n=1}^\infty a_n^*b_n.
\eneq
We note that $H_A$ is the closure of $\cup_n A^{(n)}.$
We  may also view $H_n=A^{(n)}$ as an orthogonal summand of $H_A.$
Then 
\beq
H_A^\sharp=\{\{a_n\}: \{\|\sum_{k=1}^n a_k^* a_k\|\} \,\, {\rm bounded}\}.
\eneq
If $g=\{a_n\}\in H^\sharp,$ then 
\beq
g(x)=\sum_{n=1}^\infty a_n^* b_n\rforal x=\{b_n\}\in H_A,
\eneq
where the sum converges in norm.
Moreover  $\|g\|=\lim_{n\to\infty}\|\sum_{k=1}^n b_k^*b_k\|.$
}

{\rm If $A$ is a $W^*$-algebra, as mentioned earlier, $H_A^\sharp$  becomes a Hilbert $A$-module in a natural way (see Theorem 3.2 of \cite{Pa}).  In fact, 
we may define 
\beq\label{NN-converge-0}
\la x, y\ra =\sum_{n=1}^\infty a_n^*b_n\rforal  x=\{a_n\},\,\, y=\{b_n\}\in H_A^\sharp.
\eneq
To see the right side converges 
in the weak*-topology, we first let $f\in A^*.$  
Note that, if $\{a_n\}\in H_A^{\sharp},$ 
\beq
|\sum_{k=1}^N f(a_k^*a_k)|=|f(\sum_{k=1}^N a_k^*a_k)|\le \|f\|\|\sum_{k=1}^N a_k^*a_k\|
\eneq
for any integer $N.$
Hence $\{\sum_{k=1}^n f(a_k^*a_k)\}$ is bounded, increasing and 
converges for any positive linear functional $f.$  Hence, for any $m>n,$ 
\beq\label{NN-converge}
\sum_{k=n}^m f(a_k^*a_k)\to 0\,\,\, {\rm as}\,\, n\to\infty\rforal f\in A^*.
\eneq
}
{\rm 
For any positive linear functional $f$ and for any $ m>n$ in $\N,$
\beq
|f(\sum_{k=n}^m a_k^*b_k)|&=&
|\sum_{k=n}^m f(a_k^*b_k)|\le \sum_{k=n}^m |f(a_k^*b_k)|\\
&\le &\sum_{k=n}^m |f(a_k^*a_k)|^{1/2} |f(b_k^*b_k)|^{1/2}\\
&\le & ((\sum_{k=n}^m |f(a_k^*a_k)|)(\sum_{k=n}^m |f(b_k^*b_k)|))^{1/2}\\
&\le & \|f\|^{1/2} \|\{b_k\}\||(\sum_{k=n}^m |f(a_k^*a_k)|)^{1/2}\to 0\,\,{\rm as}\,\, n\to\infty.
\eneq
It follows that 
$
f(\sum_{k=1}^na_k^*b_k)
$
converges for all $f\in A^*$ as $n\to\infty.$
Let us write the limit as $f(\sum_{k=1}^\infty a_k^*b_k).$ 
Then, by the above inequalities (with $n=1$), we also have
\beq
|f(\sum_{k=1}^\infty a_k^*b_k)|\le \|f\|M_bM_a,
\eneq
where 
\beq
M_a=\sup\{\|\sum_{k=1}^n a_k^*a_k\|\}^{1/2}\andeqn 
M_b =\sup\{\|\sum_{k=1}^n b_k^*b_k\|\}^{1/2}.
\eneq
Thus $\sum_{k=1}^\infty a_k^*b_k$ defines 
a bounded linear functional on $A^*.$ Its restriction on $A_*$ gives an element
in $A$ (recall that $A$ is assumed to be a $W^*$-algebra). 
This shows the infinite series in the right side of \eqref{NN-converge-0} converges 
in the weak*-topology.  It is then  standard to verify
that \eqref{NN-converge-0} defines an  inner product which extends the inner product 
on $H_A.$}

{\rm Let $A$  act on Hilbert space $X$  (as a $W^*$-algebra).
  Consider $l^2(X)$ the Hilbert space direct sum 
of countably many copies of $X.$ Suppose that $b=\{b_n\}\in H_A^\sharp.$
Then the infinite matrix ${\bar b}=(b_{i,j})$ with $b_{i,1}=b_i,$ $i\in \N,$ and 
$b_{i, j}=0,$ if $j\ge 2,$ defines a bounded linear operator on $l^2(X),$
by ${\bar b}(v)=(b_1(v_1), b_2(v_1),...,b_n(v_1),...),$
where $v=(v_1, v_2,...,v_n,...)\in l^2(X).$  Moreover
\beq\label{bounded}
\|{\bar b}\|^2=\|{\bar b}^*{\bar b}\|=\|\sum_{i=1}^\infty b_i^*b_i\|=\sup\{\|\sum_{i=1}^n b_i^*b_i\|: n\in \N\}
\eneq
(some of these details  in this subsection may be found in \cite{Lin91module}). 
}

\end{NN}

\begin{prop}\label{HAB}
Let $C$ be a unital \CA\,  and $A\subset C$ be a \SCA\, such that $1_{\tilde A}=1_C.$
Denote by $R=\overline{AC},$ the closed  right ideal of $C$ generated by $A.$ 
Then

(1) $H_A\hspace{-0.03in}\bullet\hspace{-0.03in} C=\{\{b_n\}\in H_C: b_n\in R\}.$

(2) If $C$ is a $W^*$-algebra and $e_\af\nearrow 1_C,$ where 
$\{e_\af\}$ is an approximate identity for $A,$ 
then
$$(H_A\hspace{-0.03in}\bullet\hspace{-0.03in} C)^\sharp=H_C^\sharp.$$

\end{prop}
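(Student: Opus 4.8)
The plan is to prove the two assertions in order, using the explicit coordinate description of $H_A$, $H_A^\sharp$ and $H_A\hspace{-0.03in}\bullet\hspace{-0.03in} C$ established in \ref{DAsharp} together with Proposition \ref{Psmall}. For part (1), I would first check the inclusion ``$\subseteq$''. An element of $H_A\hspace{-0.03in}\bullet\hspace{-0.03in} C$ is a norm limit of elementary tensors $\sum_i x_i\hspace{-0.03in}\bullet\hspace{-0.03in} c_i$ with $x_i\in H_A$ and $c_i\in C$. Writing $x_i=\{a_{i,n}\}_n$ and applying Proposition \ref{Psmall} with $B=C$ (so that $H_A\hspace{-0.03in}\bullet\hspace{-0.03in} C$ embeds isometrically as an $A$-submodule of $H_C$ via $x_i\hspace{-0.03in}\bullet\hspace{-0.03in} c_i\mapsto x_i c_i$), the $n$-th coordinate of $x_i c_i$ is $a_{i,n}c_i\in AC\subset R$. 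Since $R$ is closed and coordinate projections $H_C\to C$ are contractive, the $n$-th coordinate of any element of $H_A\hspace{-0.03in}\bullet\hspace{-0.03in} C$ lies in $R$; this gives ``$\subseteq$''. For ``$\supseteq$'', take $\{b_n\}\in H_C$ with each $b_n\in R$. Each $b_n$ is a norm limit of sums $\sum_k a_{n,k}c_{n,k}$ with $a_{n,k}\in A$, $c_{n,k}\in C$; approximating and truncating the series $\sum_n b_n^*b_n$ shows $\{b_n\}$ is a norm limit in $H_C$ of elements of the form $\sum_{\text{finite}} (e_j\otimes a)\hspace{-0.03in}\bullet\hspace{-0.03in} c$ where $e_j$ are the standard generators of $H_A$; hence $\{b_n\}\in H_A\hspace{-0.03in}\bullet\hspace{-0.03in} C$. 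One must be a little careful here that the relevant tails go to zero in the module norm, which follows from $\|\{b_n\}_{n>N}\|^2=\|\sum_{n>N}b_n^*b_n\|\to 0$.

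For part (2), assume $C$ is a $W^*$-algebra with $e_\af\nearrow 1_C$. The inclusion $(H_A\hspace{-0.03in}\bullet\hspace{-0.03in} C)^\sharp\supseteq H_C^\sharp$ is easy: an element $g\in H_C^\sharp$, given in coordinates by $g=\{g_n\}$ with $g(y)=\sum g_n^*y_n$, restricts to a bounded $C$-module map on $H_A\hspace{-0.03in}\bullet\hspace{-0.03in} C\subseteq H_C$, and by \ref{DAsharp} every element of $H_C^\sharp$ arises this way; since $H_C\subseteq H_C^\sharp$ and $H_A\hspace{-0.03in}\bullet\hspace{-0.03in} C$ sits between them, restriction is injective on $H_C^\sharp$ by self-duality of $H_C^\sharp$ (an element of $H_C^\sharp$ vanishing on a dense-enough submodule vanishes). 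The substantive direction is $(H_A\hspace{-0.03in}\bullet\hspace{-0.03in} C)^\sharp\subseteq H_C^\sharp$: given a bounded $C$-module map $\phi\colon H_A\hspace{-0.03in}\bullet\hspace{-0.03in} C\to C$, I want to produce $g=\{g_n\}\in H_C^\sharp$ with $\phi(y)=\langle g,y\rangle$. Set $g_n=\phi(e_n\hspace{-0.03in}\bullet\hspace{-0.03in} 1_C)^*\in C$; then on the dense submodule generated by the $e_n\hspace{-0.03in}\bullet\hspace{-0.03in} 1_C$ we have $\phi(\sum_n e_n\hspace{-0.03in}\bullet\hspace{-0.03in} c_n)=\sum_n g_n^*c_n$ (finite sums). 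The key estimate is that $\{\|\sum_{k\le n}g_k^*g_k\|\}$ is bounded by $\|\phi\|^2$: this follows by testing $\phi$ on the partial vectors $\sum_{k\le n}e_k\hspace{-0.03in}\bullet\hspace{-0.03in} g_k$ and using $\|\phi(\xi)\|\le\|\phi\|\|\xi\|$ together with $\|\sum_{k\le n}e_k\hspace{-0.03in}\bullet\hspace{-0.03in} g_k\|^2=\|\sum_{k\le n}g_k^*g_k\|$. Hence $g=\{g_n\}\in H_C^\sharp$ by \ref{DAsharp}, and the associated $\langle g,-\rangle$ agrees with $\phi$ on the (norm-dense, by part (1)) submodule $A^{(n)}$-tensors; by continuity they agree on $H_A\hspace{-0.03in}\bullet\hspace{-0.03in} C$, so $\phi\in H_C^\sharp$.

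The main obstacle I anticipate is the last step of part (2): establishing that the $g$ constructed from $\phi$ actually lies in $H_C^\sharp$, i.e., the uniform bound on $\|\sum_{k\le n}g_k^*g_k\|$, and then verifying that $\langle g,-\rangle$ and $\phi$ really coincide on all of $H_A\hspace{-0.03in}\bullet\hspace{-0.03in} C$ rather than only on the algebraic span of the $e_n\hspace{-0.03in}\bullet\hspace{-0.03in} c$. For the bound one needs to be slightly careful that $\sum_{k\le n}e_k\hspace{-0.03in}\bullet\hspace{-0.03in} g_k$ genuinely lies in $H_A\hspace{-0.03in}\bullet\hspace{-0.03in} C$ with the stated norm (which is immediate since it is a finite sum of elementary tensors), and that $\phi$ of it equals $\sum_{k\le n}g_k^*g_k$ up to an adjoint; for the coincidence one uses part (1) to know $\cup_n(A^{(n)}\hspace{-0.03in}\bullet\hspace{-0.03in} C)$ is dense, plus boundedness of both $\phi$ and $\langle g,-\rangle$. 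The hypothesis $e_\af\nearrow 1_C$ enters to ensure $\tilde A\subset C$ in the sense of \ref{MMSOT} so that the constructions of \ref{DAsharp} and Proposition \ref{Psmall} apply with $B=C$; I would invoke it at the outset.
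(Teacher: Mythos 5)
Part (1) of your proposal is correct and is essentially the argument in the paper (identify $A^{(n)}\bullet C$ with $R^{(n)}$ and take closures), so there is nothing to add there.

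Part (2), however, has a genuine gap at its starting point. You define the candidate coordinates by $g_n=\phi(e_n\bullet 1_C)^*$, but $e_n\bullet 1_C$ is not an element of $H_A\bullet C$ when $A$ is non-unital: by your own part (1), every element of $H_A\bullet C$ has all its coordinates in $R=\overline{AC}$, and $1_C\in R$ only in the degenerate case $R=C$ (for $A=\mathcal{K}(\ell^2)\subset C=B(\ell^2)$ one has $R=\mathcal{K}(\ell^2)\not\ni 1_C$, yet $e_\alpha\nearrow 1_C$ holds). So $\phi$ cannot be evaluated at $e_n\bullet 1_C$, and the same objection applies to your test vectors $\sum_{k\le n}e_k\bullet g_k$, whose coordinates $g_k$ lie in $C$ but need not lie in $R$. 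What $\phi$ actually gives you in the $n$-th coordinate is a bounded $C$-module map $R\to C$, i.e.\ an element of $R^\sharp$; to convert that into an honest $g_n\in C$ you must first prove that $R^\sharp=C$. That identification is the real content of part (2), and it is precisely where the hypotheses are consumed: representing $C$ on a Hilbert space $X$, the condition $e_\alpha\nearrow 1_C$ gives $\overline{R(X)}=X$; for $f\in R^\sharp$ the net $f(e_\alpha)$ then converges strongly on $R(X)$ to a bounded operator $T$, and the weak closedness of $C$ (here the $W^*$ hypothesis is essential) puts $T$ in $C$ with $f(r)=Tr$. Your closing remark that $e_\alpha\nearrow 1_C$ only serves to make the constructions of \ref{DAsharp} and Proposition \ref{Psmall} applicable is therefore off the mark: those already apply under the standing assumption $1_{\tilde A}=1_C$. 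The hypothesis $e_\alpha\nearrow 1_C$ is what makes $R^\sharp=C$ work, and it is also hidden in the ``easy'' direction of your argument, where injectivity of restriction to $H_A\bullet C$ reduces to the implication ($g_n^*a=0$ for all $a\in A$) $\Rightarrow$ $g_n=0$, which again needs $e_\alpha\nearrow 1_C$. Once $R^\sharp=C$ is in hand, the paper finishes differently from what you sketch: it gets $(A^{(n)}\bullet C)^\sharp=C^{(n)}$, deduces the sandwich $H_A\bullet C\subset H_C\subset (H_A\bullet C)^\sharp$, and applies Lemma \ref{ext} twice to conclude $H_C^\sharp=(H_A\bullet C)^\sharp$; your coordinatewise reconstruction of $\phi$ could be made to work, but only after the $R^\sharp=C$ step you are missing.
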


\begin{proof}
To see (1), we first note that $A\hspace{-0.03in}\bullet\hspace{-0.03in} C=R$  as Hilbert $C$-modules.
Hence $A^{(n)}\hspace{-0.03in}\bullet\hspace{-0.03in} C=R^{(n)}.$ 
Clearly, $H_A\hspace{-0.03in}\bullet\hspace{-0.03in} C\subset H_C.$
We note that $\{\{r_n\}\in H_C: r_n\in R\}$ is closed in $H_C.$ 
  Since 
both $\cup_n A^{(n)}\hspace{-0.03in}\bullet\hspace{-0.03in} C$ and $\cup_{n=1}^\infty R^{(n)}$ are  dense 
in  $\{\{r_n\}\in H_C: r_n\in R\},$  and $\cup_n A^{(n)}\hspace{-0.03in}\bullet\hspace{-0.03in} C$ is dense 
$H_A\hspace{-0.03in}\bullet\hspace{-0.03in} C,$ 
 we obtain that 
\beq
\{\{r_n\}\in H_C: r_n\in R\}=H_A\hspace{-0.03in}\bullet\hspace{-0.03in} C.
\eneq
This proves  (1).

For (2) we 
 may assume 
that $A\subset C\subset B(X),$ 
where $X$ is a Hilbert space, $1_C={\rm id}_X,$  and $C(X)=X.$
Otherwise, we replace $X$ by $1_C(X).$

Claim 1: $\overline{R(X)}=C(X)=X.$  Since $e_\af\nearrow 1_C={\rm id}_X,$
for any $v\in X,$ $e_\af(v)\to v.$ This proves the claim.

Claim 2: $R^\sharp=C,$ where $R^\sharp$ is the dual of Hilbert $C$-module $R$
(as we now assume that $C$ is a $W^*$-algebra).

Let $f\in R^\sharp.$
Then $f(e_\af)r=f(e_\af r)\to f(r)$ for all $r\in R$ in norm as $e_\af r\to r$ in norm. 
Hence $f(e_\af)r(v)\to f(r)(v)$ for all $r\in R$ and $v\in X.$ 
Define $T$ on $R(X)$ by $T(r(v))=\lim_\af f(e_\af)r(v)$  for all $v\in X$ and $r\in R.$ One checks that it is 
a well-defined linear map on $R(X).$ Moreover, 
$\|T\|\le \sup\{\|f(e_\af)\|: \af\}\le \|f\|.$ 
Since, by Claim 1,  $\overline{R(X)}=X,$  $T$ extends uniquely a bounded 
operator (denote by $T$ again) on $X.$ Moreover, $f(e_\af)$ converges to $T$ on $X.$ 
Since $C$ is  closed in the weak operator topology, $T\in C.$ 
Moreover $Tr(v)=f(r)(v)$ for all $v\in X.$ 
It follows that $Tr=f(r)$ for all $r\in  R.$

For each $c\in C,$ define $f_c\in R^\sharp$ by
\beq
f_c(r)=c^*r\rforal  r\in R. 
\eneq
For the above $T,$
we note that $f_{T^*}(r)=T r$ for all $r\in R.$
Hence the map $c\to f_c$ is surjective.
To see it is injective,  
suppose that $c^*r=0$ for all $r\in R.$  Then
\beq
c^*e_\af c=0\rforal \af.
\eneq
Since $c^*e_\af c\nearrow c^*c,$ this implies that $c^*c=0.$ Thus the map
$c\mapsto f_c$ is injective which extends the identity map on $R.$
It follows that $R^\sharp=C.$ Claim 2 is proved.

By Claim 2, we obtain that $((A^{(n)})\hspace{-0.03in}\bullet\hspace{-0.03in} C)^\sharp=C^{(n)}.$
By (1),  $(A^{(n)})\hspace{-0.03in}\bullet\hspace{-0.03in} C$ is a direct summand of $H_A\hspace{-0.03in}\bullet\hspace{-0.03in} C.$  
Hence we may write  $((A^{(n)})\hspace{-0.03in}\bullet\hspace{-0.03in} C)^\sharp\subset (H_A\hspace{-0.03in}\bullet\hspace{-0.03in} C)^{\sharp}.$
Together with (1), we obtain that
\beq
H_A\hspace{-0.03in}\bullet\hspace{-0.03in} C\subset H_C\subset (H_A\hspace{-0.03in}\bullet\hspace{-0.03in} C)^{\sharp}. 
\eneq
Note $H_C$ is a Hilbert $C$-submodule of the self-dual Hilbert $C$ module $(H_A\hspace{-0.03in}\bullet\hspace{-0.03in} C)^{\sharp}.$
It follows from Lemma \ref{ext} that 
\beq
(H_A\hspace{-0.03in}\bullet\hspace{-0.03in} C)^\sharp \subset H_C^\sharp\subset (H_A\hspace{-0.03in}\bullet\hspace{-0.03in} C)^{\sharp}. 
\eneq
consequently,
$H_C^\sharp=(H_A\hspace{-0.03in}\bullet\hspace{-0.03in} C)^\sharp.$
\end{proof}

\begin{NN}
{\rm Note that, if $A$ is unital, $H_A\hspace{-0.03in}\bullet\hspace{-0.03in} C=H_C.$}
\end{NN}

 From the above discussion, we obtain the following:
 
 \begin{lem}\label{L21}
 Let $A$ be a \CA, $H_n=(A^{**})^{(n)}$ 
 and $P_n: H_{A^{**}}^\sharp \to H_n$ be the projection. 
 
 (1) Let $S\subset H_{A^{**}}^\sharp$ be a bounded subset. 
 Then, for any $f\in A^*$ and $x\in H_{A^{**}}^\sharp,$ 
 \beq
 && \lim_{n\to\infty}\sup\{| f(\la P_n(x), y\ra )-f(\la x, y\ra)|: y\in S\}=0\andeqn\\
 && \lim_{n\to\infty}\sup\{| f(\la y, P_n(x)\ra )-f(\la y, x\ra)|: y\in S\}=0.
 \eneq

(2)  Moreover
 \beq
 \lim_{n\to\infty}| f(\la P_n(x), P_n(x)\ra )-f(\la x, x\ra)|=0\rforal x\in H_{A^{**}}^{\sharp}\andeqn f\in A^*.
 \eneq
  \end{lem}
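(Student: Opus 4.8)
The plan is to reduce both parts to a single coordinatewise Cauchy--Schwarz estimate, kept uniform over $S$. Throughout, write $x=\{a_k\}\in H_{A^{**}}^\sharp$, so that $M_x:=\sup_N\|\sum_{k=1}^N a_k^*a_k\|^{1/2}<\infty$, and recall that $P_n$ is the orthogonal projection onto the free summand $H_n=(A^{**})^{(n)}$, which by \ref{ext} together with the identifications in \ref{HAB} is an orthogonal summand of $H_{A^{**}}^\sharp$; consequently $P_n$ is just coordinate truncation, $P_n(x)=\{a_1,\dots,a_n,0,0,\dots\}$, since $\{a_k\}-\{a_1,\dots,a_n,0,\dots\}$ again lies in $H_{A^{**}}^\sharp$ and is orthogonal to $H_n$. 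The first step is to reduce to $f\in A^*$ positive: decomposing $f$ into its real and imaginary parts and applying the Jordan decomposition to each, $f$ is a linear combination of four positive functionals of norm at most $\|f\|$, and each of the three quantities in the statement is then dominated by the sum of the corresponding four, so it suffices to prove (1) and (2) for positive $f$.

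So fix a positive $f\in A^*$. As recorded in \ref{DAsharp}, the scalar series $\sum_k f(a_k^*a_k)$ has increasing partial sums bounded by $\|f\|M_x^2$, hence converges, so its tail $t_n:=\sum_{k>n}f(a_k^*a_k)$ tends to $0$. Put $M:=\sup\{\|y\|:y\in S\}$. For $y=\{b_k\}\in S$ and $N>n$, the triangle inequality, Cauchy--Schwarz for the positive functional $f$ applied in each coordinate, and then Cauchy--Schwarz for finite scalar sequences give
\[
\Big|f\Big(\sum_{k=n+1}^N a_k^*b_k\Big)\Big|\;\le\;\sum_{k=n+1}^N f(a_k^*a_k)^{1/2}f(b_k^*b_k)^{1/2}\;\le\;t_n^{1/2}\Big(\sum_{k=1}^N f(b_k^*b_k)\Big)^{1/2}\;\le\;t_n^{1/2}\,\big(\|f\|M^2\big)^{1/2},
\]
where the last inequality uses $\sum_{k=1}^N f(b_k^*b_k)=f\big(\sum_{k=1}^N b_k^*b_k\big)\le\|f\|\,\|y\|^2\le\|f\|M^2$. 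Since $f\in A^*$ need not be weak*-continuous on $A^{**}$, I would not substitute $f$ into the defining series of the inner product directly; rather, $\la x,y\ra$ is the weak*-limit of $\sum_{k=1}^N a_k^*b_k$ while $\la P_n(x),y\ra=\sum_{k=1}^n a_k^*b_k$, so $f(\la x,y\ra)-f(\la P_n(x),y\ra)=\lim_{N\to\infty}f\big(\sum_{k=n+1}^N a_k^*b_k\big)$, and letting $N\to\infty$ in the (now uniform) estimate above yields
\[
|f(\la x,y\ra)-f(\la P_n(x),y\ra)|\;\le\;t_n^{1/2}\,\big(\|f\|M^2\big)^{1/2}\qquad\text{for every }y\in S.
\]
The right-hand side is independent of $y$ and tends to $0$, which proves the first limit in (1); the second limit in (1) follows from the identical computation with $a_k^*b_k$ replaced by $b_k^*a_k$ (the same bound results), and then (1) holds for general $f$ by the reduction above.

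Part (2) is then immediate as the case $S=\{x\}$, $y=x$ of the first limit in (1): since $P_n(x)$ has vanishing coordinates past the $n$-th, $\la P_n(x),x\ra=\sum_{k=1}^n a_k^*a_k=\la P_n(x),P_n(x)\ra$, so $|f(\la P_n(x),P_n(x)\ra)-f(\la x,x\ra)|=|f(\la P_n(x),x\ra)-f(\la x,x\ra)|\le t_n^{1/2}(\|f\|\,\|x\|^2)^{1/2}\to 0$ for positive $f$, and the general case follows by the Jordan decomposition. The only point that genuinely needs care is this interchange of $f$ with the weak*-convergent series defining the $A^{**}$-valued inner product, which is why the argument stays at the level of finite partial sums until a bound uniform in $y\in S$ is in hand; everything else --- the coordinatewise Cauchy--Schwarz and the passage to positive functionals --- is routine.
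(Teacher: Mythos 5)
Your proof is correct and follows essentially the same route as the paper's: reduce to positive functionals via the Jordan decomposition, apply Cauchy--Schwarz coordinatewise and then for scalar sequences, and control everything by the tail $\sum_{k>n}f(a_k^*a_k)\to 0$ from \ref{DAsharp}, uniformly over the bounded set $S$. The only (harmless) differences are that your part (2) uses the identity $\la P_n(x),x\ra=\la P_n(x),P_n(x)\ra$ directly instead of invoking part (1) twice, and that your caution about interchanging $f$ with the weak*-convergent series is actually unnecessary since $A^*=(A^{**})_*$, so every $f\in A^*$ is normal on $A^{**}$.
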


\begin{proof}

Set $M=\sup\{\|y\|: y\in S\}+1.$ 
Let $f$ be a positive linear functional in $A^*$ and $x=\{a_n\}\in H_{A^{**}}^\sharp.$ 
For each $y=\{b_n\}\in S,$ 
\beq
|f(\la P_n(x), y\ra)-f(\la x, y\ra)|
&=& |\sum_{k=n+1}^\infty f(a_k^*b_k)|\\
&\le& (\sum_{k=n+1}^\infty f(a_k^*a_k))^{1/2}(\sum_{k=n+1}^\infty
f(b_k^*b_k))^{1/2}\\
&\le & \|f\|\|y\|  (\sum_{k=n+1}^\infty f(a_k^*a_k))^{1/2}\\
&\le & M\|f\| (\sum_{k=n+1}^\infty f(a_k^*a_k))^{1/2}.
\eneq
By what has been discussed in \ref{DAsharp}, 
\beq
\lim_{n\to\infty}(\sum_{k=n+1}^\infty f(a_k^*a_n))^{1/2}=0.
\eneq
Thus, for this $f$ and $x,$ 
$|f(\la P_n(x), y\ra)-f(\la x, y\ra)|$ converges uniformly on $S.$
Almost the identical estimates shows that $|f(\la y, P_n(x)\ra)-f(\la y, x\ra)|$ converges uniformly on $S.$

Since any $f\in A^*$ can be written as a linear combination of four positive linear functionals
in $A^*,$  the first part of the statement holds.

For the second part, we note that, for any $f\in A^*$ and $x\in H_{A^{**}}^\sharp,$
by the first part of the lemma (since $\|P_n(x)\|\le \|x\|$),
\beq
\lim_{n\to\infty}|f(\la P_n(x), P_n(x)\ra)-f(\la x, P_n(x)\ra)|=0.
\eneq
We also have 
\beq
\lim_{n\to\infty}| f(\la P_n(x), x\ra)-f(\la x, x\ra)|=0.
\eneq
Hence the  second part of the lemma also follows.
\end{proof}

The following are two easy facts:
We present here for convenience.

\begin{lem}\label{Telambda}
Let $A$ be a \CA.

(1) Let $H$ be a Hilbert $A$-module, and 
$\{E_\lambda\}$ be an approximate identity for $K(H).$
Suppose $T\in K(H)^{**}$ is a non-zero  positive element.
Then there is $\lambda_0$ such that
\beq
E_\lambda TE_\lambda \not=0\rforal \lambda\ge \lambda_0.
\eneq

(2) Let $T\in K(H_A)^{**}$ be a non-zero positive element and $P_n: H_A\to H_n=A^{(n)}$ be the projection ($n\in \N$). 
Then, there exists $n_0\in \N$ such that
\beq
P_nTP_n\not=0\rforal n\ge n_0.
\eneq

\end{lem}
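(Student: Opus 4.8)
The plan is to prove both parts by the same mechanism: if a positive element $T$ is killed by all the "truncations" $E_\lambda T E_\lambda$ (respectively $P_n T P_n$), then $T$ itself must be zero, contradicting the hypothesis. For part (1), first observe that $T\neq 0$ and $T\geq 0$ force the existence of $x\in K(H)^{**}$ (or better, a vector in the GNS-type representation) with $\la Tx,x\ra\neq 0$; more concretely, writing $S=T^{1/2}\in K(H)^{**}$, we have $S\neq 0$, so there is a state $\phi$ on $K(H)^{**}$ with $\phi(S^*S)=\phi(T)>0$. Now $\{E_\lambda\}$ is an approximate identity for $K(H)$, hence $E_\lambda\nearrow 1_{K(H)^{**}}$ in the strong (and weak*) operator topology of $K(H)^{**}$ viewed in its universal representation. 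Therefore $E_\lambda T E_\lambda\to T$ in the weak operator topology (using that multiplication is separately weak*-continuous and $E_\lambda\nearrow 1$ on both sides). If $E_\lambda T E_\lambda=0$ for all $\lambda$, then passing to the limit gives $T=0$, a contradiction. Since $\{E_\lambda T E_\lambda\}$ is a net of positive elements increasing to $T$ (this monotonicity is the cleanest route: $\lambda\leq\mu$ implies $E_\lambda T E_\lambda\leq E_\mu T E_\mu$ is not literally true, so instead I will argue directly that the net cannot be identically zero), there must be some $\lambda_0$ with $E_{\lambda_0}TE_{\lambda_0}\neq 0$; and then for $\lambda\geq\lambda_0$ one checks $E_\lambda TE_\lambda\neq 0$ as well, since $E_\lambda E_{\lambda_0}\to E_{\lambda_0}$, which forces $E_\lambda T E_\lambda$ to approximate $E_{\lambda_0}TE_{\lambda_0}\neq 0$ for large $\lambda$ — so after possibly enlarging $\lambda_0$ the conclusion holds for all $\lambda\geq\lambda_0$.

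For part (2), I would apply part (1) to the specific case $H=H_A$ with the particular approximate identity given by the projections. Recall from \ref{DAsharp} that $H_A$ is the norm-closure of $\bigcup_n A^{(n)}$, and $H_n=A^{(n)}$ is an orthogonal summand of $H_A$ with projection $P_n\in L(H_A)$. Under the identification $L(H_A)=M(K(H_A))$ (Theorem 1 of \cite{Ka}), the projections $\{P_n\}$ form an increasing sequence with $P_n\nearrow 1_{K(H_A)^{**}}$ in the weak* topology: indeed $P_n K(H_A) P_n$ is cofinal, and $P_n\to 1$ strongly on $H_A$ since every $x\in H_A$ is a norm-limit of its truncations $P_n(x)$. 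Hence the same argument as in (1) applies verbatim with $E_\lambda$ replaced by $P_n$: if $P_n T P_n=0$ for all $n$, then $T=\lim_n P_n T P_n=0$ (weak* limit), contradicting $T\neq 0$; and once $P_{n_0}TP_{n_0}\neq 0$, the monotonicity $P_n P_{n_0}=P_{n_0}$ for $n\geq n_0$ gives $P_n T P_n\geq P_{n_0} T P_{n_0}$... more carefully, $P_n T P_n - P_{n_0} T P_{n_0}$ need not be positive, so again I argue: $P_n T P_n = P_n(P_{n_0}TP_{n_0})P_n + (\text{terms})$; the cleanest is to note $P_{n_0} \le P_n$ as projections, so $P_{n_0}(P_nTP_n)P_{n_0} = P_{n_0}TP_{n_0}\neq 0$, whence $P_nTP_n\neq 0$ for all $n\ge n_0$.

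The main obstacle I anticipate is the passage from "$E_\lambda T E_\lambda = 0$ for all $\lambda$ implies $T=0$": one must be careful that $E_\lambda\to 1$ holds in a topology in which the bilinear map $(a,b)\mapsto a T b$ is jointly continuous on bounded sets. This is true for the strong* operator topology on bounded sets of a von Neumann algebra in its universal (or any faithful normal) representation, and $\{E_\lambda\}$ is bounded by $1$. So I would spell out: embed $K(H)^{**}$ faithfully and normally as a von Neumann algebra on a Hilbert space; then $E_\lambda\to 1$ strongly, hence $E_\lambda T E_\lambda\to T$ strongly (using boundedness for joint continuity), hence $T=0$. The rest is bookkeeping. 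The second subtlety — that $P_n\nearrow 1$ — is where I rely on the structure of $H_A$ from \ref{DAsharp} and the identification $B(H_A)$, $L(H_A)$ with multiplier algebras; I would cite \ref{D101} and \ref{DAsharp} for this and keep the verification short.
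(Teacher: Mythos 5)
Your proof is correct; I see no step that fails. It rests on the same underlying idea as the paper's proof --- the truncations of $T$ converge to $T$ in a weak topology, so they cannot all vanish eventually --- but the execution differs in two respects. In part (1) the paper works with $T^{1/2}$: the Cauchy--Schwarz estimate $|f(T^{1/2}(1-E_\lambda))|^2\le f(T)\,f(1-E_\lambda)$ for positive functionals $f$ gives $T^{1/2}E_\lambda\to T^{1/2}$ in the weak* topology, and the factorization $E_\lambda TE_\lambda=(T^{1/2}E_\lambda)^*(T^{1/2}E_\lambda)$ then makes $E_\lambda TE_\lambda\ne 0$ automatic as soon as $T^{1/2}E_\lambda\ne 0$; this is what lets the paper avoid the extra ``upgrade'' step you need (compressing by $E_{\lambda_0}$ via $E_{\lambda_0}E_\lambda\to E_{\lambda_0}$ and enlarging $\lambda_0$), which is correct but slightly clumsier, and it also keeps the whole argument at the level of functionals rather than invoking joint strong-operator continuity of multiplication on bounded sets --- the one point you rightly flag as needing care. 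In part (2) the paper does not rerun the argument with $P_n$; it builds the explicit approximate identity $E_{\beta,n}=\mathrm{diag}(e_\beta,\dots,e_\beta,0,\dots)$ of $K(H_A)$, applies part (1) to it, and uses $E_{\beta,n}^2\le P_n$ to conclude $T^{1/2}P_nT^{1/2}\ge T^{1/2}E_{\beta,n}^2T^{1/2}\ne 0$. Your route --- showing $P_n\nearrow 1$ in $K(H_A)^{**}$ and exploiting the exact relation $P_{n_0}P_n=P_{n_0}$ --- is more direct and avoids the auxiliary net; just note that when $A$ is nonunital the $P_n$ lie in $M(K(H_A))$ rather than in $K(H_A)$, so this is not literally an application of part (1) to an approximate identity but a rerun of its proof, as you in effect acknowledge. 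Both versions are sound.
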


\begin{proof}
Let $f\in A^*$ be a positive linear functional. 
Then 
\beq
|f(T^{1/2}(1-E_\lambda))|^2\le f(T)f((1-E_\lambda)^2)\le f(T)f((1-E_\lambda))\to 0.
\eneq
It follows that $f(T^{1/2}E_\lambda)\to f(T^{1/2})$ for all positive linear functionals, whence 
for all $f\in A^*.$
Since $T^{1/2}\not=0,$ for some $\lambda_0,$ $T^{1/2}E_\lambda\not=0$ for all $\lambda\ge \lambda_0.$
It follows that 
$$
E_\lambda TE_\lambda\not=0
$$
for all $\lambda\ge \lambda_0.$
This proves (1).

There are several easy proofs for (2). Let us use part (1).
Choose an approximate identity  $\{e_\af\}$ for $A.$
Let $\lambda=(\af, n)$ and $\lambda_1=(\bt_1, n)\le \lambda_2=(\bt_2, m)$
if $\bt_1\le \bt_2$ and $n\le m.$ 
Define 
\beq
E_{\bt, n}={\rm diag}(\overbrace{e_\bt, e_\bt,...,e_\bt}^n,0,...).
\eneq
Then $\{E_{\bt, n}\}$ forms an approximate identity for $K(H_A)\cong A\otimes {\cal K}.$
Let $T\in K(H)^{**}_+$ be a non-zero positive element.
By (1), there is $\bt_0$ and $n_0\in \N$ such that 
\beq
E_{\bt, n} TE_{\bt,n}\not=0\rforal (\bt, n)\ge (\bt_0, n_0).
\eneq
Hence $\|T^{1/2}E_{\bt, n}^2T^{1/2}\|=\|E_{\bt, n} TE_{\bt,n}\|\not=0$ for all $(\bt, n)\ge (\bt_0, n_0).$
Since
\beq
T^{1/2}P_nT^{1/2}\ge T^{1/2}E_{\bt, n}^2T_{1/2}\not=0,
\eneq
we have $T^{1/2}P_nT^{1/2}\not=0.$ It follows that
\beq
P_nTP_n\not=0\rforal n\ge n_0.
\eneq
\end{proof}

\begin{lem}\label{LMT-1}
Let $A$ be a  \CA\, and $H$ be a countably generated Hilbert $A$-module.
Then the \hm\,  $\Psi$ from 
$K(H)^{**}$ into  $B(H^\sim)$  (give by Proposition \ref{s2-adp}) is injective.
\end{lem}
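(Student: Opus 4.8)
The plan is to reduce the general countably generated case to the standard module $H_{A}$ and then exploit the concrete picture of $H_{A^{**}}^\sharp$ together with Lemma \ref{Telambda}(2) and Lemma \ref{L21}. First I would recall that a countably generated Hilbert $A$-module $H$ is, by Kasparov's stabilization theorem, an orthogonal direct summand of $H_A$; write $H\oplus H' \cong H_A$ for a suitable complementary module $H'$. By Proposition \ref{Ladd} applied to the pair $H\subset H_A$, the map $\Psi_{H}$ is the compression of $\Psi_{H_A}$ by the projection $Q$ onto $K(H)^{**}\subset K(H_A)^{**}$, and more precisely $\Psi_H(T)=\Psi_{H_A}(T)|_{H^\sim}$ for $T\in K(H)^{**}$, with $Q$ mapped to the projection $P:H_A^\sim\to H^\sim$. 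Since $K(H)$ is a hereditary \SCA\, of $K(H_A)$ (Lemma 3.2 of \cite{Lininj}), $K(H)^{**}=QK(H_A)^{**}Q$ sits inside $K(H_A)^{**}$ as a weak*-closed corner; so if $\Psi_{H_A}$ is injective and $0\neq T\in K(H)^{**}$, then $\Psi_{H_A}(T)\neq 0$, hence $\Psi_H(T)=P\Psi_{H_A}(T)P\neq 0$ because $\Psi_{H_A}(T)=\Psi_{H_A}(QTQ)=P\Psi_{H_A}(T)P$ already. Thus it suffices to prove injectivity for $H=H_A$.

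So fix $H=H_A$ and a non-zero $T\in K(H_A)^{**}$; I may assume $T\geq 0$ and, replacing $T$ by $T^2$ if necessary, that $T$ is a non-zero positive element. Let $P_n:H_{A}\to H_n=A^{(n)}$ be the coordinate projections, viewed also as (open) projections in $M(K(H_A))\subset K(H_A)^{**}$ via Proposition \ref{Ladd} (the $P_n$ are increasing with $P_n\nearrow \mathrm{id}$ in the weak*-topology). By Lemma \ref{Telambda}(2) there is $n_0$ with $P_nTP_n\neq 0$ for all $n\geq n_0$. Now $P_nTP_n$ lies in $K(H_n)^{**}=M_n(A)^{**}=M_n(A^{**})$, a finite-dimensional-over-$A^{**}$ corner, and $\Psi_{H_A}$ restricted to $K(H_n)^{**}$ is, by Proposition \ref{Ladd}, the map $\Psi_{H_n}$ into $B(H_n^\sim)=B((A^{**})^{(n)})=M_n(A^{**})$ — but here $\Psi_{H_n}$ is just the canonical identification $K(H_n)^{**}=M_n(A^{**})=B(H_n^\sim)$, which is plainly an isomorphism. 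Hence $\Psi_{H_A}(P_nTP_n)\neq 0$ for $n\geq n_0$. Since $\Psi_{H_A}(P_nTP_n)=P_n\Psi_{H_A}(T)P_n$ (using $\Psi_{H_A}(Q)=P$ type relations from Proposition \ref{Ladd}, applied to the summand $H_n$), we get $P_n\Psi_{H_A}(T)P_n\neq 0$ for some $n$, so $\Psi_{H_A}(T)\neq 0$. This proves $\Psi_{H_A}$, hence $\Psi_H$, is injective.

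The main obstacle I anticipate is not the reduction steps but making precise the identification $\Psi_{H_n}:K(H_n)^{**}\xrightarrow{\ \sim\ }B(H_n^\sim)$ and the compatibility $\Psi_{H_A}(P_nTP_n)=P_n\Psi_{H_A}(T)P_n$ for elements $T$ of the bidual rather than of $K(H_A)$ itself. For the first point one needs that $K(A^{(n)})=M_n(A)$, so $K(A^{(n)})^{**}=M_n(A)^{**}=M_n(A^{**})$, while $(A^{(n)})^\sim=(A^{**})^{(n)}$ and $B((A^{**})^{(n)})=M_n(A^{**})$ because $A^{**}$ is a $W^*$-algebra and finitely generated free modules over it are self-dual with all bounded module maps adjointable; under these identifications $\Psi_{H_n}$ is the identity, by checking on rank-one operators $\theta_{x,y}$ and passing to the weak*-closure via the weak*-continuity in Proposition \ref{PBL}. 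For the second point, approximate $T$ in the weak*-topology by a net $\{T_\lambda\}\subset K(H_A)$; for elements of $K(H_A)$ the relation $\Psi_{H_A}(P_nSP_n)=P_n\Psi_{H_A}(S)P_n$ is immediate from Propositions \ref{PTid} and \ref{PPsi}, and then the weak*-continuity of $\Psi_{H_A}$ (and of left/right multiplication by the fixed operators $P_n$ on the $W^*$-algebra $B(H_A^\sim)$) upgrades it to all $T\in K(H_A)^{**}$. Once these two identifications are in hand the argument closes.
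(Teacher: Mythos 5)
Your proposal is correct and follows essentially the same route as the paper: reduction to $H_A$ via Kasparov's stabilization theorem and Proposition \ref{Ladd}, the identification $K(H_n)^{**}=M_n(A^{**})=B(H_n^\sim)$ for the finite summands $H_n=A^{(n)}$ (the paper's "Claim"), and Lemma \ref{Telambda}(2) to locate a corner $P_nTP_n\neq 0$. The technical points you flag (weak*-continuity to pass the compression identity from $K(H_A)$ to its bidual, and the self-duality of $(A^{**})^{(n)}$) are exactly what the paper settles via Propositions \ref{Ladd} and \ref{HAB}.
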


\begin{proof}

Let $H_n=A^{(n)}=\{(a_1, a_2,...,a_n): a_j\in A\}$ be the Hilbert $A$-module  whose inner product is 
defined by
$\la x, y\ra=\sum_{j=1}^n a_j^*b_j,$ where $x=\{a_j\}_{1\le j\le n}$ and $y=\{b_j\}_{1\le j\le n}.$ 
One identifies $K(H_n)$ with $M_n(A).$ 

Claim: The map $\Psi:  K(H_n)^{**}\to B(H_n^\sim)$ is a $W^*$-isomorphism.

Since we identify  $K(H_n)$ with $M_n(A),$  we have $K(H_n)^{**}=M_n(A^{**}).$

By (2) of Proposition \ref{HAB} (and Claim 2 of the proof), $(H_n\hspace{-0.015in}\bullet\hspace{-0.015in} A^{**})^\sharp=(A^{**})^{(n)}.$ 
So $H_n^\sim=(A^{**})^{(n)}.$  Note that $B(H_n^\sim)=M_n(A^{**}).$
One then easily checks that $\Psi: K(H_n)^{**}\to B(H_n^\sim)$ is a $W^*$-isomorphism.
This proves the claim.

  Let us consider  the map $\Psi_{H_A}: K(H_A)^{**}\to B(H_A^\sim)$  given by Proposition \ref{PBL}.
  Put $T\in K(H)^{**}_+\setminus \{0\}$ 
  
  By (2) of Lemma \ref{Telambda},  there exists $n_0\in \N$ such that
  $P_nTP_n\not=0.$  
  Recall that $H_n$ is a direct summand of $H_A.$ Hence 
  by the claim and applying  Proposition \ref{Ladd}, we conclude that  
  have $\Psi_{H_A}(P_nTP_n)\not=0$ for all $n\ge n_0.$ 
  There must be  an element $x\in H_n$
  such that
  \beq
  \la \Psi_{H_A}(P_nTP_n)(x), x\ra \not=0.
  \eneq
  It follows that $\la \Psi_{H_A}(T)x, x\ra\not=0.$    Hence 
  $\Psi_{H_A}(T)\not=0.$ This implies that ${\rm ker}\Psi_{H_A}=\{0\}.$  
  
  In general, since $H$ is countably generated, by Kasparov's absorbing theorem (Theorem 2 of \cite{Ka}),
  we may write $H_A=H\oplus H^\perp.$  
   To show $\Psi$ is injective, 
  let $T\in B(H)^{**}$ be a non-zero element.
  Then $K(H)^{**}=PK(H_A)^{**}P,$ where $P: H_A\to H$ is the projection.
  Hence $PTP=T$ in $K(H_A)^{**}.$
  We have shown that $\Psi_{H_A}(PTP)\not=0.$
  Since, by Lemma \ref{Ladd}, $\Psi(T)=P\Psi_{H_A}(T)P|_{H^\sim}\not=0.$
  This implies that $\Psi$ is injective.
  
\end{proof}

\begin{lem}\label{LMT-2}
Let $A$ be a \CA\, and $H$ be a countably generated Hilbert $A$-module.
Then there is an isomorphism $\Psi$ from 
$K(H)^{**}$ onto $B(H^\sim)$ as $W^*$-algebras. 
\end{lem}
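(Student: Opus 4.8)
The plan is to establish the statement first for the standard module $H=H_A$ and then to obtain the general countably generated case from it by Kasparov's stabilization theorem, using the corner compatibility of $\Psi$ recorded in Proposition \ref{Ladd}. Recall that, by Proposition \ref{PBL} and Lemma \ref{LMT-1}, the map $\Psi_{H_A}\colon K(H_A)^{**}\to B(H_A^\sim)$ is an injective normal $*$-\hm\ of $W^*$-algebras; so the whole task reduces to proving that $\Psi_{H_A}$ is surjective.

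For surjectivity I would proceed as follows. Put $H_n=A^{(n)}$, let $Q_n\in K(H_A)^{**}$ be the open projection corresponding to the hereditary subalgebra $K(H_n)\subset K(H_A)$, and set $P_n=\Psi_{H_A}(Q_n)$. By Proposition \ref{Ladd}, $P_n$ is the projection of $H_A^\sim$ onto $H_n^\sim$, and the restriction of $\Psi_{H_A}$ to the corner $K(H_n)^{**}=Q_nK(H_A)^{**}Q_n$ is, after the obvious identifications, the map $\Psi_{H_n}\colon K(H_n)^{**}\to B(H_n^\sim)$, which is a $W^*$-isomorphism by the Claim in the proof of Lemma \ref{LMT-1}; hence $\Psi_{H_A}\bigl(K(H_n)^{**}\bigr)=P_nB(H_A^\sim)P_n$. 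Since $\bigcup_nK(H_n)$ is norm dense in $K(H_A)$, we have $Q_n\nearrow\id$ in $K(H_A)^{**}$, and applying the normal homomorphism $\Psi_{H_A}$ gives $P_n\nearrow\id_{H_A^\sim}$. Now fix $T\in B(H_A^\sim)$. As $\Psi_{H_n}$ is onto, choose $S_n\in K(H_n)^{**}\subset K(H_A)^{**}$ with $\Psi_{H_A}(S_n)=P_nTP_n$; since $\Psi_{H_A}$ is isometric (being injective), $\|S_n\|=\|P_nTP_n\|\le\|T\|$. The bounded net $\{S_n\}$ has a weak$^*$-cluster point $S\in K(H_A)^{**}$; along a subnet $S_{n_\alpha}\to S$ weak$^*$, so by the weak$^*$-continuity of $\Psi$ (Proposition \ref{PBL}) one has $\Psi_{H_A}(S_{n_\alpha})\to\Psi_{H_A}(S)$ in the weak$^*$-topology of $B(H_A^\sim)$. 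On the other hand $\Psi_{H_A}(S_{n_\alpha})=P_{n_\alpha}TP_{n_\alpha}$, and $P_n\nearrow\id_{H_A^\sim}$ forces $P_nTP_n\to T$ weak$^*$ (indeed $P_n\to\id$ strongly in any faithful normal representation, whence $P_nTP_n\to T$ strongly and a fortiori weak$^*$), so the same holds along the subnet. Therefore $\Psi_{H_A}(S)=T$, $\Psi_{H_A}$ is surjective, and, being also injective, it is a $W^*$-isomorphism. (One may phrase the same argument without cluster points by noting that an injective normal $*$-\hm\ has weak$^*$-closed range, so $\Psi_{H_A}(K(H_A)^{**})$ is a $W^*$-subalgebra containing every $P_nB(H_A^\sim)P_n$, hence all of $B(H_A^\sim)$ because $P_n\nearrow\id$.)

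For an arbitrary countably generated Hilbert $A$-module $H$, Kasparov's stabilization theorem (Theorem 2 of \cite{Ka}) allows us to view $H$ as an orthogonal summand of $H_A$; let $\hat P\in L(H_A)\subset K(H_A)^{**}$ be the associated projection, so that $K(H)^{**}=\hat PK(H_A)^{**}\hat P$. By Proposition \ref{Ladd}, $\tilde P:=\Psi_{H_A}(\hat P)$ is the projection of $H_A^\sim$ onto $H^\sim$, $B(H^\sim)=\tilde PB(H_A^\sim)\tilde P$, and $\Psi_H=\Psi_{H_A}|_{K(H)^{**}}$. Since $\Psi_{H_A}$ is a $W^*$-isomorphism, it restricts to a $W^*$-isomorphism of the corner $\hat PK(H_A)^{**}\hat P$ onto the corner $\tilde PB(H_A^\sim)\tilde P$; that is, $\Psi_H\colon K(H)^{**}\to B(H^\sim)$ is a $W^*$-isomorphism, which is the assertion.

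The main obstacle is the surjectivity step for $H=H_A$: one has to see that the images of the finite corners $K(H_n)^{**}$ under $\Psi_{H_A}$ are weak$^*$-dense in $B(H_A^\sim)$, and this is exactly where $P_n\nearrow\id_{H_A^\sim}$ — itself a consequence of $\Psi_{H_A}(Q_n)=P_n$ together with $Q_n\nearrow\id$ — and the weak$^*$-continuity of $\Psi$ are used. Granting the case $H=H_A$, the general case is a formal corner argument based on Proposition \ref{Ladd}.
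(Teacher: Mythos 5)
Your proposal is correct and follows essentially the same route as the paper: reduce to $H_A$ via Kasparov's stabilization theorem and Proposition \ref{Ladd}, use the finite corners $H_n=A^{(n)}$ where the Claim of Lemma \ref{LMT-1} gives surjectivity, and conclude by weak$^*$-closedness of the range of $\Psi$ together with $P_nTP_n\to T$ in the weak$^*$-topology. The only (harmless) variation is that you obtain $P_nTP_n\to T$ abstractly from $Q_n\nearrow\id$, normality, and $\Psi(Q_n)=P_n$, whereas the paper verifies it by the explicit sequence-space estimates of Lemma \ref{L21}.
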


\begin{proof}
By Lemma \ref{LMT-1} (and by Proposition \ref{PBL}), 
it suffices to show that $\Psi$ is surjective. 
%
%
Let us first  consider  the case $H=H_A$ (even though $H_A$ is not countably generated when
$A$ is not $\sigma$-unital).
By the end of \ref{DHsim} (see also  Remark 3.9 (and 3.10) of \cite{Pa}), 
to show that $T\in B(H^\sim)=B(H_{A^{**}}^\sharp)$ is in $\Psi(K(H_A)^{**}),$
it suffices to show  that, for any $\ep>0,$ any finite subsets $X\subset 
H_{A^{**}}^\sharp$ and a finite subset ${\cal F}\subset  A^*,$ 
there exists $S\in K(H)^{**}$ such that
\beq
|f(\la \Psi(S)(x), y\ra )-f(\la T(x), y\ra)|<\ep\rforal x, y\in X\andeqn f\in {\cal F}.
\eneq

For any $T\in B(H_A^\sim)=B(H_A^\sharp),$ 
\beq
|f(\la P_nTP_n(x), y\ra -f(\la T(x), y\ra)|
&\le & |f(TP_n(x), P_n(y)\ra)-f(\la T(x), P_n(y)\ra)|
\\
&&+
|f(\la T(x), P_n(y)\ra )-f(\la T(x), y\ra)|
\eneq
for any $x, y\in H_{A^{**}}^\sharp$ and $f\in A^*.$
However,  $\|P_n(y)\|\le \|y\|$ for all $n\in \N.$ By (1) of Lemma \ref{L21},
\beq
|f(TP_n(x), P_n(y)\ra)-f(\la T(x), P_n(y)\ra)|\to 0,
\eneq
 and 
by (2) of Lemma \ref{L21},
\beq
|f(\la T(x), P_n(y)\ra )-f(\la T(x), y\ra)|\to 0.
\eneq
It follows that 
\beq
\lim_{n\to\infty}|f(\la P_nTP_n(x), y\ra -f(\la T(x), y\ra)|=0
\eneq
for all $x, y\in H_A^\sharp$ and $f\in A^*.$

We then choose $n_0\in \N$ such that for all $n\ge n_0$ (recall $P_n$ is a projection)
\beq\label{LTM-2-01}
|f(\la P_nTP_n(x), P_n(y)\ra)-f(T(x), y\ra)|<\ep
\eneq
for all $x, y\in X$ and $f\in {\cal F}.$

Now fix $n\ge n_0.$ Then we have 
$P_n(x), P_n(y)\in (H_n)^\sim$ for all $x, y\in X,$  and $P_nTP_n\subset B(H_n^\sim).$ 
By the claim  for $H_n$ in the proof of Lemma \ref{LMT-1},  
we obtain an element $S\in K(H_n)^{**}$
such that $\Psi_n(S)=(P_nTP_n)|_{H_n^\sim},$
where $\Psi_n: K(H_n)^{**}\cong M_n(A^{**}) \to B(H_n^\sim)=M_n(A^{**})$ is the isomorphism 
given  by the claim.
Note, by Lemma \ref{Ladd}, that $\Psi(S)=P_n\Psi(S)=\Psi(S)P_n=\Psi_n(S).$
Hence 
it  follows that,
for all $x, y\in X$ and $f\in {\cal F}$ (and $n\ge n_0$),
\beq\nonumber
\hspace{-0.4in}|f(\la \Psi(S)(x), y\ra)-f(\la T(x), y\ra)|&=&
|f(\la P_n\Psi(S)P_n(x), y\ra)-f(T(x), y\ra)|\\\nonumber
 &=&
|(f(\la P_n\Psi(S)P_n(x), P_n(y)\ra)- f(\la P_nTP_n(x), P_n(y))|\\\nonumber
&&+|f(\la P_nTP_n(x), P_n(y)\ra)-f(T(x), y\ra)|\\
&&<0+\ep=\ep.
\eneq
%
As mentioned above, this implies that $\Psi$ is surjective. 

For a general  countably generated Hilbert $A$-module $H,$
by Kasparov's  absorbing theorem (Theorem 2 of \cite{Ka}), 
we may write $H_A=H\oplus H^\perp.$
By \ref{Ladd}, $H_A^\sim=H^\sim\oplus (H^\perp)^\sim.$ 
Let $S\in B(H^\sim)\setminus \{0\}.$ 
Define $T\in B(H_A^\sim)$ by $T|_{H^\sim}=S$ and $S|_{(H^\perp)^\sim)}=\{0\}.$
We have shown that there is $L\in B(H_A)^{**}$ such 
that $\Psi_{H_A}(L)=S.$ 
Then $PSP=S,$ by \ref{Ladd}, $\Psi(L)=P\Psi_{H_A}(L)P|_{H^\sim}=T.$
Hence $\Psi$ is surjective.
%
\end{proof}

\begin{thm}\label{MMT}
Let $A$ be a \CA\, and $H$ be a Hilbert $A$-module.
Then there is an isomorphism $\Psi$ (given by Proposition \ref{PBL}) from 
$K(H)^{**}$ onto $B(H^\sim)$ as $W^*$-algebras.
Moreover,
\beq
\Psi|_{B(H)}=\tilde \Psi_0.
\eneq 
\end{thm}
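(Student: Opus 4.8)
## Proof Proposal

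The plan is to reduce the general case to the countably generated case already settled in Lemma \ref{LMT-2}, by writing an arbitrary Hilbert $A$-module $H$ as a ``directed union'' of its countably generated Hilbert submodules and assembling the isomorphisms $\Psi_{H_i}$ compatibly via Proposition \ref{Ladd}. First I would recall that $K(H)$ is the norm closure of $F(H)$, and that every element of $F(H)$, being a finite sum of rank-one maps $\theta_{x,y}$, is supported on the countably generated submodule generated by the finitely many vectors $x,y$ involved; hence $K(H) = \overline{\bigcup_i K(H_i)}$ where $\{H_i\}$ ranges over the (upward directed) family of countably generated Hilbert $A$-submodules of $H$, each $K(H_i)$ sitting inside $K(H)$ as a hereditary $C^*$-subalgebra (by Lemma 3.2 of \cite{Lininj}, as used in the setup before Proposition \ref{Ladd}). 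Correspondingly $H^\sim = (H\hspace{-0.03in}\bullet\hspace{-0.03in} A^{**})^\sharp$ contains each $H_i^\sim$ as an orthogonal summand by Lemma \ref{ext}, with projection $P_i \in L(H^\sim)$, and $\Psi = \Psi_H$ restricts on $K(H_i)^{**}$ to $\Psi_{H_i}$ in the precise sense of Proposition \ref{Ladd}, namely $\Psi_H|_{K(H_i)^{**}} = P_i \Psi_H P_i |_{K(H_i)^{**}} = \Psi_{H_i}$, and $\Psi_H$ sends the open projection $Q_i$ of $K(H_i)$ to $P_i$.

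For injectivity of $\Psi = \Psi_H$, I would take a nonzero positive $T \in K(H)^{**}$. Since $\{Q_i\}$ is an increasing net of open projections in $K(H)^{**}$ with $Q_i \nearrow \mathrm{id}$ (as $\bigcup_i K(H_i)$ is dense in $K(H)$), we have $Q_i T Q_i \to T$ in the weak*-topology, so $Q_i T Q_i \neq 0$ for some $i$; moreover $Q_i T Q_i$ lies in $Q_i K(H)^{**} Q_i = K(H_i)^{**}$. By Lemma \ref{LMT-2}, $\Psi_{H_i}$ is a $W^*$-isomorphism, so $\Psi_{H_i}(Q_i T Q_i) \neq 0$; hence, by Proposition \ref{Ladd}, $0 \neq \Psi_{H_i}(Q_i T Q_i) = P_i \Psi_H(Q_i T Q_i) P_i|_{H_i^\sim} = P_i \Psi_H(Q_i) \Psi_H(T) \Psi_H(Q_i) P_i|_{H_i^\sim} = P_i \Psi_H(T) P_i|_{H_i^\sim}$ (using $\Psi_H(Q_i) = P_i$), which forces $\Psi_H(T) \neq 0$. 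Thus $\ker \Psi = \{0\}$, and since $\Psi$ is a $*$-homomorphism between $W^*$-algebras it is isometric.

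For surjectivity, let $T \in B(H^\sim)$. I would show $T \in \Psi(K(H)^{**})$ using the weak*-density criterion recalled at the end of Definition \ref{DHsim}: it suffices, given $\ep > 0$, a finite $X \subset H^\sim$ and a finite ${\cal F} \subset A^*$, to produce $S \in K(H)^{**}$ with $|f(\la \Psi(S)(x), y\ra) - f(\la T(x), y\ra)| < \ep$ for all $x,y \in X$, $f \in {\cal F}$. Since $X$ is finite and each element of $X \subset H^\sim = (H\hspace{-0.03in}\bullet\hspace{-0.03in} A^{**})^\sharp$ is approximated (in the relevant weak sense, via Lemma \ref{ext} and the structure of $H^\sim$ as a directed union of the $H_i^\sim$) by elements of some $H_i^\sim$, I would choose a single countably generated $H_i$ so that $P_i$ moves each $x, y \in X$ negligibly in the sense of the bracket-$f$ seminorms — this is where I would invoke that $\{P_i\}$ converges strongly to the identity of $B(H^\sim)$ in the weak*-topology, which follows from $\Psi(Q_i) = P_i$, $Q_i \nearrow \mathrm{id}$, and the continuity statement in Proposition \ref{PBL}. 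Then $P_i T P_i|_{H_i^\sim} \in B(H_i^\sim)$, and by the surjectivity in Lemma \ref{LMT-2} there is $S \in K(H_i)^{**} \subset K(H)^{**}$ with $\Psi_{H_i}(S) = P_i T P_i|_{H_i^\sim}$; by Proposition \ref{Ladd}, $\Psi(S) = P_i \Psi(S) P_i = \Psi_{H_i}(S)$, and a triangle-inequality estimate exactly parallel to the one in the proof of Lemma \ref{LMT-2} gives the desired inequality. Hence $\Psi$ is onto, so it is a $W^*$-isomorphism. The identity $\Psi|_{B(H)} = \tilde\Psi_0$ is already part of the conclusion of Proposition \ref{PBL}, so nothing further is needed there.

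The main obstacle I anticipate is the surjectivity step: one must argue carefully that a \emph{single} countably generated submodule $H_i$ can be chosen to simultaneously (a) absorb the finite set $X$ up to the finitely many seminorms coming from ${\cal F}$, and (b) satisfy the compatibility $\Psi(S) = \Psi_{H_i}(S)$ — the first part is really a statement about weak*-convergence of $P_i$ to $\mathrm{id}_{B(H^\sim)}$ that has to be extracted cleanly from $\Psi(Q_i) = P_i$ together with $Q_i \nearrow \mathrm{id}$ in $K(H)^{**}$ and the weak*-continuity of $\Psi$ in Proposition \ref{PBL}, and one should double-check that the directed family of countably generated submodules is genuinely cofinal for this purpose (it is, since $X$ is finite and each $x \in X$ is a weak*-limit of elements in $\bigcup_i H_i^\sim$, these limits being controlled by the $H_A$-approximation machinery of Lemma \ref{L21}). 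Everything else — heredity of $K(H_i)$ in $K(H)$, the net of open projections, the reduction via Proposition \ref{Ladd} — is by now routine given the earlier results.
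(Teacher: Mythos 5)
Your overall strategy coincides with the paper's: reduce to the countably generated case settled in Lemma \ref{LMT-2}, using Proposition \ref{Ladd} to make the local isomorphisms compatible, prove injectivity by a nonzero compression into some $K(H_i)^{**}$, and prove surjectivity via the weak*-density criterion from \ref{DHsim}. The only structural difference is that the paper does not use the directed family of all countably generated submodules; it takes an approximate identity $\{E_\lambda\}$ of $K(H)$ and the submodules $H_\lambda=\overline{E_\lambda(H)}$, which are countably generated by Proposition 3.2 of \cite{BL}. That choice is not essential, and your injectivity argument (via $Q_iTQ_i\to T$ weak*) is fine.

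There is, however, a genuine gap in your surjectivity step, and it sits exactly where the paper's proof takes evasive action. You need, for fixed $x,y\in X$ and $f\in{\cal F}$, that $|f(\la P_iTP_i(x),y\ra)-f(\la T(x),y\ra)|$ can be made small, and you propose to get this from weak*-convergence $P_i\to\id_{H^\sim}$. Splitting the difference as
$$f(\la T(P_ix-x),P_iy\ra)+\big(f(\la T(x),P_iy\ra)-f(\la T(x),y\ra)\big),$$
the second term is controlled by weak*-convergence against the fixed vector $T(x)$, but the first term is $f(\la P_ix-x,\,T^*(P_iy)\ra)$, where the test vector $T^*(P_iy)$ moves with $i$. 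Weak*-convergence of $P_i$ tested against fixed vectors does not control this; you need either uniformity over the bounded set $\{T^*(P_iy)\}$ (the paper proves such uniformity only for $H_{A^{**}}^\sharp$ and the standard projections $P_n$, in Lemma \ref{L21}(1)) or an extra argument. The paper avoids the issue entirely by using the asymmetric cutdown $\Psi(E_\lambda)L\Psi(E_{\lambda_0})$ with two indices: first fix $\lambda_0$ using the adjoint $L^*$ so that $L\Psi(E_{\lambda_0})(x)\approx L(x)$ against $y$, then choose $\lambda\ge\lambda_0$ so that $\Psi(E_\lambda)$ converges against the now-fixed vector $L\Psi(E_{\lambda_0})(x)$ -- it even remarks parenthetically that the proof would be shorter if one knew $f(\la\Psi(E_\lambda)L\Psi(E_\lambda)(x),y\ra)\to f(\la L(x),y\ra)$. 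Your gap is fixable without abandoning the symmetric cutdown: for positive $f$, Cauchy--Bunyakovsky--Schwarz gives $|f(\la P_ix-x,T^*(P_iy)\ra)|^2\le f(\la P_ix-x,P_ix-x\ra)\,\|f\|\,\|T\|^2\|y\|^2$, and $f(\la P_ix-x,P_ix-x\ra)=f(\la x,x\ra)-f(\la P_ix,x\ra)\to 0$ is again just weak*-convergence against fixed vectors; general $f$ is a combination of four positive functionals. But as written, ``this is where I would invoke that $\{P_i\}$ converges \dots in the weak*-topology'' does not close the argument, and this estimate is the crux of the whole theorem.
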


\begin{proof}
By Proposition \ref{PBL}, it suffices to show that $\Psi$ is bijective. 
If $K(H)$ is unital, by Proposition 2.8 of \cite{BL}, $H$ is finitely generated. The theorem then follows from Lemma \ref{LMT-2}.
So we will assume that $K(H)$ is not unital.

Let $\{E_\lambda\}$ be an approximate identity for $K(H)$  and 
$H_\lambda=\overline{E_\lambda(H)}.$ 
Then $K(H_\lambda)=\overline{E_\lambda K(H)E_\lambda}$
is $\sigma$-unital.   By Proposition 3.2 of \cite{BL}, 
$H_\lambda$ is countably generated. 

Denote by $P_\lambda: H^\sim\to H_\lambda^\sim $ the projection given by \ref{ext}
and $\Psi_\lambda: K(H_\lambda)^{**}\to B(H_\lambda^\sim)$ be the map given by Proposition \ref{PBL}.

To see $\Psi$ is injective, let $T\in K(H)^{**}_+\setminus \{0\}.$
It follows from Lemma \ref{Telambda} that $E_\lambda TE_\lambda\not=0$ for all $\lambda\ge \lambda_0$
and  some $\lambda_0.$
Since $H_\lambda$ is countably generated, by Lemma \ref{LMT-2}, 
$\Psi_\lambda(E_\lambda TE_\lambda)\not=0$ (for $\lambda\ge \lambda_0$).
By Proposition \ref{Ladd}, 
\beq
\Psi(E_\lambda T E_\lambda)|_{H_\lambda ^\sim}=\Psi_\lambda(E_\lambda T E_\lambda).
\eneq
It follows   that $\Psi(E_\lambda T E_\lambda)|_{H_\lambda ^\sim}\not=0$
for all $\lambda\ge \lambda_0.$  For $\lambda\ge \lambda_0,$ there are $x, y\in H_\lambda$
such that
\beq
\la \Psi(T)(E_\lambda(x)), E_\lambda(y)\ra=\la \Psi(E_\lambda T E_\lambda)(x), y\ra\not=0.
\eneq
Hence $\Psi(T)\not=0.$ This shows that $\Psi$ is injective.

To see that $\Psi$ is surjective, 
let $L\in B(H^\sim).$ 
Since, by Proposition \ref{PBL}, $\Psi(K(H)^{**})$ is weak*-closed in 
$W^*$-algebra $B(H^\sim),$ it suffices to show the following:
for any $\ep>0,$ any finite subsets $X, Y\subset H^\sim,$ and 
finite subset ${\cal F}\subset A^*,$ there exists 
$T\in K(H)^{**}$ such that
\beq\label{MT-101}
|f(\la \Psi(T)(x), y\ra)-f(L(x), y\ra)|<\ep\rforal x\in X,\,\, y\in Y\andeqn f\in {\cal F}
\eneq
(see the last part of \ref{DHsim}).
We now fix $\ep,$ $X, Y$ and ${\cal F}.$
By Proposition \ref{PBL} (since $E_\lambda\nearrow 1_{K(H)^{**}}$),   
\beq
\lim_\lambda f(\la x, \Psi(E_\lambda)(y)\ra)=\lim_\lambda f(\la \Psi(E_\lambda)(x), y\ra)=f(\la x, y\ra)
\eneq
for all $x, y\in H^\sim$ and $f\in A^*.$
It follows that there is $\lambda_0$ such that, for all $\lambda\ge \lambda_0,$ 
\beq
&&|f(\la \Psi(E_\lambda)(x), L^*(y)\ra)-f(\la x, L^*(y)\ra)|<\ep/2,\\
{\rm or}\,\,\,&& |f(\la L\Psi(E_\lambda)(x), y\ra)-f(\la L(x), y\ra)|<\ep/2
\eneq
 for all $x\in X,$ $y\in Y$ and $f\in {\cal F}.$
 (The proof would be shorter if we know \\
$\lim_\lambda f(\la \Psi(E_\lambda)L\Psi(E_\lambda)(x), y\ra)=f(\la L(x),y\ra).$)
 However, we may also assume that, for fixed $\lambda_0,$ 
 there is $\lambda_1\ge \lambda_0$ such that
 \beq
 &&|f(\la L\Psi(E_{\lambda_0})(x), \Psi(E_\lambda)(y)\ra)-f(\la L\Psi(E_{\lambda_0})(x), y\ra)|<\ep/2,\\
{\rm or}\,\,\, &&|f(\la \Psi(E_\lambda)L\Psi(E_{\lambda_0})(x), y\ra)-f(\la L\Psi(E_{\lambda_0})(x), y\ra)|<\ep/2
 \eneq
 for all $x\in X,$ $y\in Y$ and $f\in {\cal F},$ and $\lambda\ge \lambda_1.$ 
 It follows that, for all $x\in X,$ $y\in Y$ and $f\in {\cal F},$  if $\lambda\ge \lambda_1,$
 \beq
 &&\hspace{-0.6in}|f(\la L(x), y\ra)-f(\la \Psi(E_\lambda)L\Psi(E_{\lambda_0})(x), y\ra)|\\
 &&\le |f(\la L(x), y\ra)-f(\la L\Psi(E_{\lambda_0})(x), y\ra)|\\
&&\hspace{0.4in}+ |f(\la L\Psi(E_{\lambda_0})(x), y\ra)-f(\la \Psi(E_\lambda)L\Psi(E_{\lambda_0})(x), y \ra)|\\\label{MT-1003}
&&<\ep/2+\ep/2=\ep.
 \eneq
 
 Fix $\lambda\ge \lambda_1\ge \lambda_0.$
 Then $H_\lambda=\overline{E_\lambda(H)}\supset H_{\lambda_0}.$
 We also note that $\Psi(E_\lambda)L\Psi(E_{\lambda_0})|_{H_\lambda^\sim}\in B(H_\lambda).$
 Since $H_\lambda$ is countably generated, by Lemma \ref{LMT-2}, there is $T_\lambda\in K(H_\lambda)^{**}$
 such that 
 \beq\label{MT-1001}
 \Psi_\lambda(T_\lambda)=\Psi(E_\lambda)L\Psi(E_{\lambda_0})|_{H_\lambda^\sim}.
 \eneq
 However, by Proposition \ref{Ladd}, 
 \beq\label{MT-1002}
P_\lambda\Psi(T_\lambda)P_\lambda|_{H_\lambda^\sim}= \Psi(T_\lambda)|_{H_\lambda^\sim}=\Psi_\lambda(T_\lambda).
 \eneq
 Fix $\lambda\ge \lambda_1\ge \lambda_0.$ Then, for any $x\in X, y\in Y$ and $f\in A^{**},$
 by \eqref{MT-1002}, \eqref{MT-1001} and \eqref{MT-1003},
 \beq\nonumber
\hspace{-0.5in} |f(\la \Psi(T_\lambda)(x), (y)\ra)-f(\la L(x), y\ra)|
 &=&|f(\la \Psi(T_\lambda)P_\lambda(x), P_\lambda(y)\ra)-f(\la L(x), y\ra)|\\\nonumber
 &=& |f(\la \Psi(E_\lambda)L\Psi(E_{\lambda_0})P_\lambda(x), P_\lambda(y)\ra)-f(\la L(x), y\ra)|\\\nonumber
 &=& |f(\la P_\lambda\Psi(E_\lambda)L\Psi(E_{\lambda_0})P_\lambda(x), y\ra)-f(\la L(x), y\ra)|\\\nonumber
 &=&|f(\la \Psi(E_\lambda)L\Psi(E_{\lambda_0})(x), y\ra)-f(\la L(x), y\ra)|<\ep.
 \eneq 
 As mentioned above, this implies that $\Psi$ is surjective.
 
 \end{proof}
 
 \begin{cor}
 Let $A$ be a $W^*$-algebra and $H$ be a Hilbert $A$-module.
 Then $F: K(H)^{**}\to B(H^\sharp),$  the map given by Proposition \ref{s2-adp}, is a surjective map.
 \end{cor}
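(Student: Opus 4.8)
The plan is to deduce the Corollary from Theorem \ref{MMT} by exhibiting $B(H^\sharp)$ as a central direct summand of $B(H^\sim)$ and identifying $F$ with the corresponding corner of the isomorphism $\Psi$. Since $A$ is a $W^*$-algebra it is a weak*-closed two-sided ideal of its bidual, so there is a central projection $z\in A^{**}$ such that the canonical embedding $A\hookrightarrow A^{**}$ followed by multiplication by $z$ is a $W^*$-isomorphism of $A$ onto $zA^{**}$ (the normal/singular splitting of $A^*$; see, e.g., \cite{Pedbook}). Because $z$ is central, right multiplication $R_z\colon\xi\mapsto\xi z$ on $H^\sim=(H\hspace{-0.03in}\bullet\hspace{-0.03in}A^{**})^\sharp$ is a self-adjoint idempotent in $L(H^\sim)=B(H^\sim)$ which commutes with every $A^{**}$-module map on $H^\sim$; hence $R_z$ is a central projection of the $W^*$-algebra $B(H^\sim)$, and setting $H^\sim z:=R_z(H^\sim)$ the corner $R_zB(H^\sim)=R_zB(H^\sim)R_z\cong B(H^\sim z)$ is a direct summand of $B(H^\sim)$.

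Next I would identify $H^\sim z$ with $H^\sharp$. The submodule $(H\hspace{-0.03in}\bullet\hspace{-0.03in}A^{**})z$ of $H\hspace{-0.03in}\bullet\hspace{-0.03in}A^{**}$ is isometrically isomorphic to $H$ as a Hilbert $zA^{**}\cong A$-module, via $h\mapsto \iota(h)z$ (one checks $\langle\iota(h)z,\iota(h')z\rangle=\langle h,h'\rangle z$ and uses the density of $(H\otimes A^{**})/N$ in $H\hspace{-0.03in}\bullet\hspace{-0.03in}A^{**}$). Taking $zA^{**}$-duals and using that $z$ is complemented, $H^\sim z=\big((H\hspace{-0.03in}\bullet\hspace{-0.03in}A^{**})z\big)^\sharp$ is the self-dual Hilbert $zA^{**}$-module generated by $H$; but $A$ is a $W^*$-algebra, so $H^\sharp$ is already self-dual (Theorem 3.2 of \cite{Pa}), whence $H^\sim z\cong H^\sharp$ as Hilbert $A$-modules, compatibly with the inclusion $H\subset H^\sharp$ and the embedding $h\mapsto\iota(h)z$ of $H$ into $H^\sim z$. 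Consequently $B(H^\sim z)\cong B(H^\sharp)$, and composing we obtain a surjective normal $*$-homomorphism $q\colon B(H^\sim)\to B(H^\sharp)$, $T\mapsto R_zTR_z$ read through these identifications.

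Finally I would verify that $F=q\circ\Psi$. Both sides are normal, hence weak*-continuous, $*$-homomorphisms $K(H)^{**}\to B(H^\sharp)$, so by uniqueness of the normal extension of a $*$-homomorphism it is enough to check that they agree on the weak*-dense subalgebra $K(H)$, and for that it suffices to compare them on a rank-one map $\theta_{x,y}\in F(H)$. On one side $F_0(\theta_{x,y})$ sends $\zeta\in H^\sharp$ to $x\langle y,\zeta\rangle$ (Definition \ref{DFW}); on the other, $R_z\tilde\Psi_0(\theta_{x,y})R_z$ is the rank-one operator $\theta_{\iota(x)z,\,\iota(y)z}$ on $H^\sim z$, which under $H^\sim z\cong H^\sharp$ (with $\iota(x)z\leftrightarrow x$ and $\iota(y)z\leftrightarrow y$) is precisely $\zeta\mapsto x\langle y,\zeta\rangle$. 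Hence $F=q\circ\Psi$; since $\Psi$ is surjective by Theorem \ref{MMT} and $q$ is surjective, $F$ is surjective.

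The main difficulty is the bookkeeping in the second step: making it precise that compressing $H^\sim$ by the central projection $z$ returns exactly the $A$-dual $H^\sharp$, rather than some larger self-dual completion, and that the various embeddings of $H$ match up so that the rank-one computation closing the argument is unambiguous. Everything else is a formal consequence of Theorem \ref{MMT} together with the centrality of $z$.
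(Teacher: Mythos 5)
Your argument is correct in substance, but it reaches the conclusion by a genuinely different route from the paper's. The paper goes \emph{upward}: using Corollary 4.3 of \cite{Pa} it observes $H^\sharp\subset H^\sim$ and then, via Proposition \ref{Psmall} and two applications of Lemma \ref{ext}, shows $(H^\sharp\bullet A^{**})^\sharp=H^\sim$; this allows a second invocation of Theorem \ref{MMT} (for the module $H^\sharp$) to embed $B(H^\sharp)$ into $B(H^\sim)$, the choice of $a\in K(H)^{**}$ with $\tilde\Psi(a)=\tilde\Psi_{H^\sharp}(T)$, and finally the identification $F(a)=T$ by a weak*-approximation argument on $H$ together with the fact that $A_*$ separates the points of $A$. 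You instead go \emph{downward}, compressing $B(H^\sim)$ by the central projection $R_z$ attached to the normal part of $A^{**}$ and identifying the resulting direct summand with $B(H^\sharp)$, so that $F=q\circ\Psi$ follows from normality once the two maps are matched on rank-one operators. Your route makes the surjectivity of $F$ a formal consequence of the surjectivity of $\Psi$ composed with a quotient map and avoids the second use of Theorem \ref{MMT}; its cost is exactly the bookkeeping you flag: one must check that $h\mapsto\iota(h)z$ is onto $(H\bullet A^{**})z$ (which uses that $a\mapsto az$ maps $A$ \emph{onto} $zA^{**}$), and that the restriction of the inner product of $H^\sim$ to $H^\sim z$ coincides with Paschke's inner product on $H^\sharp$ under the identification $H^\sim z\cong B(H,zA^{**})\cong H^\sharp$ --- without this, $R_zB(H^\sim)R_z$ is identified with $B(H^\sharp)$ only as a Banach algebra and the rank-one comparison is not yet meaningful. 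One small imprecision to repair: the canonical copy of $A$ in $A^{**}$ is \emph{not} a two-sided ideal of $A^{**}$; what is true, and what you actually use, is that the normal/singular decomposition of $A^*$ produces a central projection $z\in A^{**}$ for which $a\mapsto az$ is a normal $*$-isomorphism of $A$ onto $zA^{**}$.
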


 \begin{proof}
 Consider the pair $A$ and $A^{**}$ and $H^\sim =(H\hspace{-0.03in}\bullet \hspace{-0.03in} A^{**})^\sharp.$
 By Corollary 4.3 of \cite{Pa}, $H^\sim=B(H, A^{**}),$ the $A^{**}$-module of all bounded $A^{**}$-valued $A$-module maps 
 from $H$ into $A^{**}.$ It follows that $H^\sharp\subset H^\sim$ as an $A$-submodule. 
 It then follows from Proposition \ref{Psmall} that $H^\sharp\hspace{-0.03in}\bullet \hspace{-0.03in} A^{**}\subset H^\sim$
 as Hilbert $A^{**}$-modules.  Then, by applying Lemma \ref{ext}, 
 $$
 (H^\sharp\hspace{-0.03in}\bullet \hspace{-0.03in} A^{**})^\sharp\subset  H^\sim.
 $$
 However, $H\hspace{-0.03in}\bullet \hspace{-0.03in} A^{**}\subset H^\sharp\hspace{-0.03in}\bullet \hspace{-0.03in} A^{**}.$ By applying Lemma \ref{ext} again, we obtain 
 $$
H^\sim= (H\hspace{-0.03in}\bullet \hspace{-0.03in} A^{**})^\sharp\subset (H^\sharp\hspace{-0.03in}\bullet \hspace{-0.03in} A^{**})^\sharp.
 $$
 Hence $ (H^\sharp\hspace{-0.03in}\bullet \hspace{-0.03in} A^{**})^\sharp= H^\sim.$
 Denote by $\tilde \Psi: K(H)^{**}\to B(H^\sim)$ the isomorphism given by Theorem \ref{MMT} and 
 by $\tilde \Psi_{H^\sharp}: B(H^\sharp)\to B((H^\sharp\hspace{-0.01in}\bullet \hspace{-0.01in} A^{**})^\sharp)=B(H^\sim)$
  the map given by Theorem \ref{MMT}, respectively. 
  
  Now let  $T\in B(H^\sharp).$  Then, by applying Theorem \ref{MMT},  we obtain  $a\in K(H)^{**}$ 
  such that $\tilde \Psi(a)=\tilde \Psi_{H^\sharp}(T).$ 
  It follows that 
  \beq
  \tilde \Psi(a)|_{H^\sharp}=T.
  \eneq
  Since $a\in K(H)^{**},$ there exists a net $\{a_\af\}$ in $K(H)$ such that 
  $a_\af\to a$ in the weak*-topology.
  Therefore, by Proposition \ref{PBL}, for any $f\in A^*$ and  any $\xi, \zeta \in H^\sim,$
  \beq
  \lim_\af f(\la (\tilde\Psi(a)-\tilde\Psi(a_\af))(\xi), \zeta\ra)=0.
  \eneq
  On the other hand, by Proposition \ref{s2-adp}, for any $g\in A_*$   and any $x, y\in H,$
  \beq
  \lim_\af g(\la (F(a)-a_\af)(x), y\ra)=0.
  \eneq
  Hence 
  \beq
  g(\la (F(a)-\tilde\Psi(a))(x),y\ra)=0\rforal x, y\in H\andeqn g\in A_*.
  \eneq
  Since $\tilde \Psi(a)|_{H^\sharp}=T,$ 
  we actually have 
  \beq\label{gggg}
  g(\la (F(a)-T)(x),y\ra)=0\rforal x,\, y\in H\andeqn g\in A_*.
  \eneq
Note that  $F(a),\, T\in B(H^\sharp).$   So $F(a)(x), T(x)\in H^\sharp$  for all $x\in H.$ 
It follows that 
\beq
\la (F(a)-T)(x),y\ra\in A\rforal x,\,y\in H.
\eneq
By \eqref{gggg}, 
$$
\la (F(a)-T)(x), y\ra=0\rforal x, y\in H.
$$
Hence $F(a)=T.$ In other words, $F$ is surjective.
 \end{proof}

 \section{A Kaplansky density theorem in Hilbert modules}
 
 As mentioned in the introduction,  in this section we study the density 
 of $H$ in $H\hspace{-0.03in}\bullet\hspace{-0.03in} A^{**}.$ 

 \begin{df}\label{DTs}
 Let  $X$ be a Hilbert space and $A\subset B(X)$ be a \SCA\, of $B(H).$
 Let $M={\bar A}^{^{SOT}},$  the strong operator closure of $A,$ 
  and $H$ be a Hilbert $A$-module. 
  Recall, by Proposition \ref{Psmall}, $H\hspace{-0.03in}\bullet\hspace{-0.03in} M$ is the smallest 
  Hilbert $M$-module containing $H$ as Hilbert $A$-module.
  We consider  the question  how large $H$ in $H\hspace{-0.03in}\bullet\hspace{-0.03in} M$ as a submodule. 
  
 Let $\ep>0,$ 
 and $V$ be a finite subset of $X.$
  For each $\xi \in H\hspace{-0.03in}\bullet\hspace{-0.03in} M,$ define  
 \beq
 N_{\xi, \ep, V}=\{z\in  H\hspace{-0.03in}\bullet\hspace{-0.03in} M: \|\la \xi-z,\xi-z\ra(v)\|<\ep,\,\,\, v\in V\}.
 \eneq
 Let ${\cal T}_s$ be the topology generated by 
 $N_{\xi, \ep,  V}$ for all $\xi\in H\hspace{-0.03in}\bullet\hspace{-0.03in} M,$ $\ep\in \R_+\setminus \{0\},$
 any finite subset $Y\in H\hspace{-0.03in}\bullet\hspace{-0.03in} M$ and any finite subset $V\subset X.$
In other words, in ${\cal T}_s,$ a net $\{z_\af\}$ converges to $\xi$ in $H\hspace{-0.03in}\bullet\hspace{-0.03in} M$
if and only if 
\beq
\lim_\af \|\la \xi-z_\af, \xi-z_\af\ra(v)\|=0\rforal v\in X.
\eneq

In the special case that $X=H_U$
is the Hilbert space corresponding to the universal representation $\pi_U$ of $A$ 
and $M=A^{**},$ we use ${\cal T}_{su}$ 
for the topology 
  generated by 
 $N_{\xi, \ep, V}$ for all $\xi\in H\hspace{-0.03in}\bullet\hspace{-0.03in} A^{**},$ $\ep\in \R_+\setminus \{0\},$
 and any finite subset $V\subset H_U.$
 \end{df}
 
 It is easy to see that $H$ is dense in $H\hspace{-0.03in}\bullet\hspace{-0.03in} M$ in topology ${\cal T}_s.$
 However, to be more useful, we will show in Theorem \ref{LP1} that 
 the unit ball of $H$ is dense in the unit ball of $H\hspace{-0.03in}\bullet\hspace{-0.03in} M$ in ${\cal T}_s,$
 a Kaplansky style density theorem.
 
 \begin{lem}\label{4LL+1}
 Suppose that $x\in H\hspace{-0.03in}\bullet\hspace{-0.03in} M$ and $\{x_\af\}\subset H\hspace{-0.03in}\bullet\hspace{-0.03in} M$ is a bounded net.
 Then 
 $x_\af\to x$ in ${\cal T}_s$ if and only if, for any $v\in X,$  
 \beq
\lim_\af \sup\{\|\la y, x_\af-x\ra(v)\|: y\in H\hspace{-0.03in}\bullet\hspace{-0.03in} M,\,\,\|y\|\le 1\}=0.
 \eneq
Moreover,  if $x_\af\to x$ in ${\cal T}_s,$
then, for any $f\in M_*,$ 
\beq
\lim_\af \sup\{|f(\la y, x_\af-x\ra)|: y\in H\hspace{-0.03in}\bullet\hspace{-0.03in} M, \,\,\|y\|\le 1\}=0.
\eneq
 \end{lem}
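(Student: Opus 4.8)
The plan is to set $z_\af:=x_\af-x,$ a bounded net in $H\hspace{-0.03in}\bullet\hspace{-0.03in}M$ with $R:=\sup_\af\|z_\af\|<\infty,$ and to record that, by the description of ${\cal T}_s$ in Definition \ref{DTs}, the condition ``$x_\af\to x$ in ${\cal T}_s$'' means exactly that $\|\la z_\af,z_\af\ra(v)\|\to 0$ for every $v\in X.$ Everything will then be deduced from the Cauchy--Schwarz inequality for Hilbert modules (see \cite{Pa}),
\beq
\la u,w\ra^*\la u,w\ra\le \|\la u,u\ra\|\,\la w,w\ra\qquad(u,w\in H\hspace{-0.03in}\bullet\hspace{-0.03in}M),
\eneq
read as an inequality between positive elements of $M\subset B(X),$ together with the standard description of the predual $M_*.$

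For the forward implication of the ``iff,'' fix $v\in X$ and $y\in H\hspace{-0.03in}\bullet\hspace{-0.03in}M$ with $\|y\|\le 1.$ Applying Cauchy--Schwarz with $u=y$ and $w=z_\af$ gives $\la y,z_\af\ra^*\la y,z_\af\ra\le\|\la y,y\ra\|\,\la z_\af,z_\af\ra\le\la z_\af,z_\af\ra$ in $B(X);$ pairing both sides with $v$ then yields
\beq
\|\la y,z_\af\ra(v)\|^2=\la\la y,z_\af\ra(v),\la y,z_\af\ra(v)\ra_X\le\la\la z_\af,z_\af\ra(v),v\ra_X\le\|\la z_\af,z_\af\ra(v)\|\,\|v\|.
\eneq
The right-hand side does not depend on $y$ and tends to $0$ by hypothesis, so $\sup\{\|\la y,z_\af\ra(v)\|:y\in H\hspace{-0.03in}\bullet\hspace{-0.03in}M,\ \|y\|\le1\}\to0.$ Conversely, suppose this supremum tends to $0$ for every $v.$ For $z_\af\ne 0$ the element $z_\af/\|z_\af\|$ lies in the unit ball of $H\hspace{-0.03in}\bullet\hspace{-0.03in}M$ and $\la z_\af,z_\af\ra=\|z_\af\|\,\la z_\af/\|z_\af\|,z_\af\ra,$ so $\|\la z_\af,z_\af\ra(v)\|\le R\,\sup\{\|\la y,z_\af\ra(v)\|:\|y\|\le1\}\to0;$ hence $x_\af\to x$ in ${\cal T}_s.$ This proves the first assertion (the boundedness of the net being used only in the converse).

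For the ``Moreover'' part I would reduce to vector functionals. The linear span of the functionals $\omega_{v,w}:T\mapsto\la Tv,w\ra_X$ $(v,w\in X)$ is norm dense in $M_*.$ Given $f\in M_*$ and $\ep>0,$ choose $g=\sum_{n=1}^N\omega_{v_n,w_n}$ with $\|f-g\|<\ep.$ Since $\|\la y,z_\af\ra\|\le\|z_\af\|\le R$ whenever $\|y\|\le1,$
\beq
|f(\la y,z_\af\ra)|\le\sum_{n=1}^N\|\la y,z_\af\ra(v_n)\|\,\|w_n\|+\ep R\qquad(\|y\|\le1).
\eneq
By the first part, $\sup\{\|\la y,z_\af\ra(v_n)\|:\|y\|\le1\}\to0$ as $\af\to\infty$ for each fixed $n,$ so taking the supremum over the unit ball and then $\limsup_\af$ gives $\limsup_\af\sup\{|f(\la y,z_\af\ra)|:\|y\|\le1\}\le\ep R;$ letting $\ep\to0$ completes the proof.

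The argument is essentially routine. The only points needing a little care are (i) using the correct orientation of the Cauchy--Schwarz inequality and checking that it is legitimately manipulated as an operator inequality on $X,$ so that pairing with $v\in X$ is order-preserving and recovers $\|\la y,z_\af\ra(v)\|^2$ on the left, and (ii) invoking the norm density of the vector functionals in $M_*.$ Neither is a real obstacle, and the boundedness hypothesis enters only to bound $\|z_\af\|$ in the converse of the first assertion and in the ``Moreover'' estimate.
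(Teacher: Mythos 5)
Your proof is correct. For the ``if and only if'' assertion your argument coincides with the paper's: the forward direction is the Paschke--Cauchy--Schwarz inequality $\la z_\af,y\ra\la y,z_\af\ra\le\|y\|^2\la z_\af,z_\af\ra$ paired against $v\in X,$ and the converse is the normalization trick $\la z_\af,z_\af\ra=\|z_\af\|\la z_\af/\|z_\af\|,z_\af\ra$ using the boundedness of the net. Where you diverge is the ``Moreover'' part. The paper first upgrades the strong operator convergence $\la z_\af,z_\af\ra\to 0$ to weak*-convergence (SOT implies WOT, and a bounded net converging to $0$ in WOT converges $\sigma$-weakly), obtaining $f(\la z_\af,z_\af\ra)\to 0$ for all $f\in M_*,$ and then applies the Cauchy--Bunyakovsky--Schwarz inequality for the pseudo inner product $f(\la\cdot,\cdot\ra)$ when $f$ is a positive normal functional to get the uniform estimate over the unit ball. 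You instead approximate a general $f\in M_*$ in norm by a finite sum of vector functionals $\omega_{v,w}$ and feed the already-established uniform convergence of $\sup_{\|y\|\le1}\|\la y,z_\af\ra(v)\|$ directly into that approximation, using the bound $\|\la y,z_\af\ra\|\le R$ to control the error term. Both routes are valid; the paper's buys the intermediate fact $f(\la z_\af,z_\af\ra)\to 0$ (which it reuses elsewhere in similar arguments) at the cost of a topology-comparison step and a decomposition of $f$ into positive parts, while yours stays entirely at the level of vector functionals and avoids any appeal to positivity of $f,$ relying instead on the norm density of the span of $\{\omega_{v,w}\}$ in $M_*.$
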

 
 \begin{proof}
 Suppose that $x_\af\to x$ in ${\cal T}_s.$
 We have (see (ii) of Proposition 2.3 of \cite{Pa}), for any $y\in H\hspace{-0.03in}\bullet\hspace{-0.03in} M,$  
 \beq
\la x_\af-x, y\ra \cdot  \la y, x_\af-x\ra\le \|y\|^2 \la x_\af-x,x_\af-x\ra.
 \eneq
 Then, for any $v\in X,$ and any $y\in  H\hspace{-0.03in}\bullet\hspace{-0.03in} M$ with $\|y\|\le 1,$ 
 \beq
 \|\la y, x_\af-x\ra(v)\|^2&=&
\la  \la x_\af-x, y\ra \cdot \la y, x_\af-x\ra v, v\ra_X \\
 &\le & \|y\|^2\la \la x_\af-x, x_\af-x\ra v, v\ra_X\\
 &\le & \|\la x_\af-x, x_\af-x\ra v\|\|v\|\to 0
 \eneq
 (where $\la \cdot, \cdot\ra_X$ is the inner product in the Hilbert space $X$).
 Conversely, let $K=\sup_\af\{\|x_\af\|+\|x\|\}+1.$ 
 Then 
 \beq
 \|\la x_\af-x, x_\af-x\ra (v)\|\le K\sup\{ \|\la y, x_\af-x\ra (v)\|: y\in H\hspace{-0.03in}\bullet\hspace{-0.03in} M,\,\, \|y\|\le 1\}\to 0
 \eneq
  
  For the ``Moreover" part of the lemma, 
  suppose that
  $\la x_\af-x, x_\af-x\ra\to 0$ in  the strong operator topology. 
  Then it converges in the weak operator topology. However $\{\la x_\af-x, x_\af -x\ra \}$ is bounded, 
  this also implies that it converges to zero in $\sigma$-weak topology and in weak*-topology.
  In other words, 
  \beq
  \lim_\af f(\la x_\af-x, x_\af-x\ra)=0\rforal f\in M_*. 
  \eneq
  Let $f\in M_*$ be a positive normal functional.
  Then,  $f(\la \cdot, \cdot\ra)$ defines a pseudo inner product on $H\hspace{-0.03in}\bullet\hspace{-0.03in} M.$
  Hence,
   for any $y\in H\hspace{-0.03in}\bullet\hspace{-0.03in} M,$
  we have, by Cauchy-Bunyakovsky-Schwarz inequality,  
  \beq
  |f(\la y, x_\af-x\ra)|^2\le f(\la y, y\ra) f(\la x_\af-x, x_\af-x\ra)\le \|f\|^2\|y\|^2 f(\la x_\af-x,x_\af-x\ra).
  \eneq 
  Thus
  \beq
  \lim_\af \sup\{|f(\la y, x_\af-x\ra)|: y\in H\hspace{-0.03in}\bullet\hspace{-0.03in} M,\,\, \|y\|\le 1\}=0.
  \eneq
  
 \end{proof}
 
 \begin{lem}\label{4LL1}
 Let $X$ be a Hilbert space, $A\subset B(X)$ be a \SCA\, and 
 $M=\overline{A}^{{SOT}}$ such that ${\rm id}_X\in M.$  
 Then the unit ball of $H_A$ is dense in  the unit ball  of 
 $H_A\hspace{-0.03in}\bullet\hspace{-0.03in} M$ in  ${\cal T}_{s}.$
\end{lem}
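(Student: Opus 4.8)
The plan is to reduce the statement to the classical Kaplansky density theorem applied to a matrix algebra. It suffices to show: for every $\xi\in H_A\hspace{-0.03in}\bullet\hspace{-0.03in} M$ with $\|\xi\|\le 1$, every $\ep>0$ and every finite $V\subset X$, there is $x\in H_A$ with $\|x\|\le 1$ such that $\|\la\xi-x,\xi-x\ra(v)\|<\ep$ for all $v\in V$. First I would reduce to finitely supported $\xi$. Writing $\xi=\{b_n\}$ (each $b_n\in\overline{AM}$ by Proposition \ref{HAB}(1)) and $\xi^{(N)}=(b_1,\dots,b_N,0,0,\dots)$, one has $\|\xi^{(N)}\|^2=\|\sum_{n\le N}b_n^*b_n\|\le\|\xi\|^2$ and $\|\xi-\xi^{(N)}\|\to 0$ since $\sum b_n^*b_n$ converges in norm; because norm convergence implies ${\cal T}_s$ convergence, and $\eta\mapsto\|\la\eta,\eta\ra^{1/2}(v)\|$ is a seminorm on bounded subsets (Cauchy--Schwarz for the positive form $m\mapsto\la mv,v\ra_X$; cf.\ Lemma \ref{4LL+1}), a triangle-inequality estimate reduces everything to approximating a fixed finitely supported $\xi^{(N)}$, with $\|\xi^{(N)}\|\le 1$, inside the unit ball of $H_A$.

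Fix such an $N$ and set $\mathbf b=(b_1,\dots,b_N)$. I would pass to $M_N(B(X))$, identified with $B(X^{(N)})$ for $X^{(N)}$ the $N$-fold Hilbert-space direct sum of $X$. Let $\tilde{\mathbf b}\in M_N(B(X))$ be the matrix with first column $(b_1,\dots,b_N)^{\mathrm T}$ and all other entries $0$; then $\|\tilde{\mathbf b}\|^2=\|\tilde{\mathbf b}^*\tilde{\mathbf b}\|=\|\sum_j b_j^*b_j\|=\|\xi^{(N)}\|^2\le 1$, and $\tilde{\mathbf b}\in M_N(M)$. Since $M=\overline A^{SOT}$ with $\mathrm{id}_X\in M$, an entrywise strongly convergent net of matrices converges strongly in $B(X^{(N)})$, so $M_N(M)=\overline{M_N(A)}^{SOT}$ is a von Neumann algebra on $X^{(N)}$ containing $\mathrm{id}_{X^{(N)}}$. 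Applying the classical Kaplansky density theorem to the \SCA\, $M_N(A)\subset M_N(M)$ produces a net $\{d_\af\}\subset M_N(A)$ with $\|d_\af\|\le\|\tilde{\mathbf b}\|\le 1$ and $d_\af\to\tilde{\mathbf b}$ in the strong operator topology.

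Next I would extract the first column. Let $p\in B(X^{(N)})$ be the orthogonal projection onto the first coordinate copy of $X$. Right multiplication by $p$ is strong-operator continuous, so $d_\af p\to\tilde{\mathbf b}p=\tilde{\mathbf b}$ strongly, $\|d_\af p\|\le 1$, and $d_\af p$ is precisely the matrix whose only nonzero column is the first column $((d_\af)_{1,1},\dots,(d_\af)_{N,1})^{\mathrm T}$ of $d_\af$, whose entries lie in $A$. Put $\mathbf a_\af=((d_\af)_{1,1},\dots,(d_\af)_{N,1})\in A^{(N)}\subset H_A$, so $\|\mathbf a_\af\|^2=\|\sum_j(d_\af)_{j,1}^*(d_\af)_{j,1}\|=\|d_\af p\|^2\le 1$. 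For $v\in X$ write $\tilde v=(v,0,\dots,0)\in X^{(N)}$; since $d_\af p\to\tilde{\mathbf b}$ strongly and $\|(\tilde{\mathbf b}-d_\af p)^*\|\le 2$, we get
\beq
(\tilde{\mathbf b}-d_\af p)^*(\tilde{\mathbf b}-d_\af p)\,\tilde v\longrightarrow 0
\eneq
in norm. But the left-hand vector equals $(\la\xi^{(N)}-\mathbf a_\af,\xi^{(N)}-\mathbf a_\af\ra v,0,\dots,0)$, so $\|\la\xi^{(N)}-\mathbf a_\af,\xi^{(N)}-\mathbf a_\af\ra(v)\|\to 0$ for all $v\in X$, i.e.\ $\mathbf a_\af\to\xi^{(N)}$ in ${\cal T}_s$ through the unit ball of $H_A$. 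Combined with the first paragraph, this produces the desired $x$.

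The main obstacle is keeping track of the norm bound: one must verify that $\|x\|\le 1$ survives both the truncation $\xi\mapsto\xi^{(N)}$ and the passage from the Kaplansky net $d_\af$ to its first column $\mathbf a_\af$, and one must confirm that $M_N(M)$ is genuinely the von Neumann algebra generated by $M_N(A)$ so that the classical density theorem applies verbatim. Everything else --- the dictionary between column operators on $X^{(N)}$ and vectors of $H_M$, and the strong-operator continuity of right multiplication --- is routine.
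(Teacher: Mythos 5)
Your proposal is correct and is essentially the paper's own argument: truncate $\xi$ using the norm convergence of $\sum_n b_n^*b_n$, realize the truncated column as a matrix in $M_N(M)$, apply the classical Kaplansky density theorem to $M_N(A)\subset M_N(M)$, and cut down to the first column with the coordinate projection $q=\mathrm{diag}(1,0,\dots,0)$ so as to preserve both the norm bound $\|\sum_i a_i^*a_i\|\le\|\sum_i b_i^*b_i\|$ and the strong approximation. The only cosmetic differences are that you phrase the approximation as a net and isolate the identification $M_N(M)=\overline{M_N(A)}^{SOT}$ explicitly, whereas the paper fixes $\ep$ and $V$, produces a single $L\in M_n(A)$, and absorbs the tail $\sum_{i>n}b_i^*b_i$ directly into the final estimate.
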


\begin{proof}
Let $\xi\in H_A\hspace{-0.03in}\bullet\hspace{-0.03in} M$ with $\|\xi\|\le 1.$
We will show that  there is a net $\{x_\af\}\in H$ such that 
$\|x_\af\|\le \|\xi\|$ and $\lim_\af \|\la x_\af-\xi, x_\af-\xi\ra (v)\|=0$
for all $v\in X.$   From the inequality
$$
\|\la x_\af-\xi, x_\af-\xi\ra (v)\|\le \|\la x_\af-\xi,x_\af, \xi\ra^{1/2}\|\|\la x_\af-\xi,x_\af, \xi\ra^{1/2}(v)\|
\le 2\|\|\la x_\af-\xi,x_\af, \xi\ra^{1/2}(v)\|,
$$
we conclude that it is enough to show that there is  net $\{x_\af\}\in H$ such that 
$\|x_\af\|\le \|\xi\|$ and $\lim_\af \|\la x_\af-\xi, x_\af-\xi\ra^{1/2} v\|=0$
for all $v\in X.$ 
%
Therefore 
it suffices to show that, for any $\ep>0$ 
and 
any finite subset $V\subset X,$ there exists $z\in H$ with $\|z\|\le 1$ such 
that
\beq
\|(\la \xi-z, \xi-z\ra)^{1/2}(v)\|<\ep\rforal v\in V.
\eneq
To simplify notation, we may also assume 
that $\|v\|\le 1$ for all $v\in V.$

Denote by $R={\overline{AM}},$ the closed right ideal of $M$ generated by $A.$
Note, by Proposition \ref{HAB}, 
\beq
H_A\hspace{-0.03in}\bullet\hspace{-0.03in} M=\{\{b_n\}\in H_B: b_n\in R\}
.
\eneq
We write $\xi=\{b_n\}\in H_A\hspace{-0.03in}\bullet\hspace{-0.03in} M.$ 
There exists $n_0\in \N$ such that, for all $n\ge n_0,$ 
\beq
\|\sum_{k=n}^\infty b_n^*b_n\|<\ep/2.
\eneq
Fix an integer $n\ge n_0.$
Let  $P_n: H_A\hspace{-0.03in}\bullet\hspace{-0.03in} M\to R^{(n)}=\{(c_1, c_2,...,c_n): c_i\in R\}$ be the projection.
Put 
\beq
S=\begin{pmatrix} b_1 & 0& 0\cdots & 0\\
                                   b_2 & 0& 0\cdots &0\\
                                   \vdots && &\vdots\\
                                   b_n & 0 & 0\dots & 0\end{pmatrix}.
                                   \eneq

             For any $v\in V,$ 
              put 
             \beq
             u_v=\begin{pmatrix} v\\
                      0\\
                      \vdots\\
                      0\end{pmatrix}. 
                              \eneq
     By the Kaplansky density theorem,
     there is $L\in M_n(A)$ such that
     \beq\label{4LL-101}
     \|L\|\le \|S\|\andeqn 
     \|L(u_v)-S(u_v)\|<\ep/2
     \eneq
     for all $v\in V.$
     Hence, denoting by $\la \cdot,\cdot\ra_X$  the inner product in $X,$ 
     \beq\label{4LL-101+}
     \la(L-S)^*(L-S)u_v, u_v\ra_X<\ep/2\rforal v\in V.
     \eneq
     Define $q={\rm diag}(1,0,...,0)\in M_n(M).$ 
     Then $S=Sq.$ Replacing $L$ by $Lq,$ we may write 
     \beq
     L=\begin{pmatrix} a_1 & 0& 0 \cdots  & 0\\
                                    a_2 & 0& 0\cdots &0\\
                                   \vdots&&& \vdots\\
                                   a_n & 0 & 0\dots & 0\end{pmatrix},
     \eneq
     where $a_i\in A,$ $i=1,2,...,n.$
     Then 
     \beq\label{SLL-102}
     \|\sum_{i=1}^n a_i^*a_i\|=\|L^*L\|=\|L\|^2\le \|S\|^2=\|\sum_{i=1}^n b_i^*b_i\|\le \|\xi\|^2.
     \eneq
       It follows from \eqref {4LL-101+} that
       \beq
       \la \sum_{i=1}^n(b_i-a_i)^*(b_i -a_i)(v), v\ra_X<\ep/2.
       \eneq  
       Put $x=(a_1, a_2,...,a_n, 0,0, \cdots)\in H_A.$ 
       Then, by \eqref{SLL-102}, $\|x\|\le \|\xi\|,$ and 
       \beq
      \hspace{-0.4in} \la \la \xi-x, \xi-x\ra(v), v\ra_X &=&\la \sum_{i=1}^n(b_i-a_i)^*(b_i-a_i)(v),v\ra_X+\la \sum_{i=n+1}^\infty b_i^*b_i(v),v\ra_X\\
       &<& \ep/2+\|\sum_{i=n+1}^\infty b_i^*b_i\|<\ep/2+\ep/2=\ep.
       \eneq
           In other words, for any $v\in V,$ 
           \beq
           \|(\la \xi-x_\af, \xi-x_\af)^{1/2}(v)\|=\la\la \xi-x, \xi-x\ra(v), v\ra_X<\ep.
           \eneq
           The lemma then follows.
           \end{proof}

 \begin{thm}\label{LP1}
 Let $X$ be a Hilbert space, $A\subset B(X)$ be a \SCA\, and 
 $M=\overline{A}^{{SOT}}$ and ${\rm id}_X\in M.$
 Let $H$ be a Hilbert $A$-module.
 Then the unit ball of $H$ is dense in  the unit ball  of 
 $H\hspace{-0.03in}\bullet\hspace{-0.03in} M$ in  ${\cal T}_{s}.$
\end{thm}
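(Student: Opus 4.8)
The plan is to bootstrap from the case $H=H_A$ already settled in Lemma \ref{4LL1}: first localize an arbitrary vector of $H\hspace{-0.03in}\bullet\hspace{-0.03in} M$ inside a countably generated submodule, and then use Kasparov's stabilization theorem to realize that submodule as a complemented submodule of $H_A$. To set things up, for $v$ in the Hilbert space $X$ let $\omega_v$ denote the normal positive functional $T\mapsto\la Tv,v\ra$ on $M$; then $p_v(w):=\omega_v(\la w,w\ra)^{1/2}=\|\la w,w\ra^{1/2}(v)\|$ is a genuine seminorm on $H\hspace{-0.03in}\bullet\hspace{-0.03in} M$ (Cauchy--Schwarz for the positive semi-definite form $\omega_v(\la\cdot,\cdot\ra)$, exactly the estimate used in the ``Moreover'' part of Lemma \ref{4LL+1}). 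Since $\|\la w,w\ra(v)\|\le\|w\|\,p_v(w)$, on norm-bounded sets ${\cal T}_s$ coincides with the topology defined by the family $\{p_v:v\in X\}$. Hence it suffices to prove: for every $\xi\in H\hspace{-0.03in}\bullet\hspace{-0.03in} M$, every $\ep>0$ and every finite $V\subset X$ there is $z\in H$ with $\|z\|\le\|\xi\|$ and $p_v(\xi-z)<\ep$ for all $v\in V$; directing over the pairs $(\ep,V)$ then yields the required net inside the unit ball of $H$.

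Fix such a $\xi$ (we may assume $\xi\ne 0$). Choose $\xi_0\in(H\otimes M)/N$ with $\|\xi_0-\xi\|$ as small as we please; replacing $\xi_0$ by $\min\{1,\|\xi\|/\|\xi_0\|\}\,\xi_0$ we may also assume $\|\xi_0\|\le\|\xi\|$, still with $\|\xi_0-\xi\|$ small. Writing $\xi_0=\sum_{i=1}^k x_i\hspace{-0.03in}\bullet\hspace{-0.03in} b_i$ with $x_i\in H$ and $b_i\in M$, let $H_0:=\overline{\sum_i x_iA}$, a Hilbert $A$-submodule of $H$ generated by a finite, hence countable, set. By Proposition \ref{Psmall}, $H_0\hspace{-0.03in}\bullet\hspace{-0.03in} M$ embeds isometrically as a submodule of $H\hspace{-0.03in}\bullet\hspace{-0.03in} M$, with matching inner products and hence matching topology ${\cal T}_s$, and $\xi_0$ lies in (the image of) $H_0\hspace{-0.03in}\bullet\hspace{-0.03in} M$. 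So it is enough to find $z\in H_0\subset H$ with $\|z\|\le\|\xi_0\|$ and $p_v(\xi_0-z)$ small for $v\in V$, i.e. to approximate $\xi_0$ in the unit ball of the countably generated module $H_0$.

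For this I invoke Kasparov's stabilization theorem (Theorem 2 of \cite{Ka}): since $H_0$ is countably generated, $H_A\cong H_0\oplus H_0^{\perp}$, so there is a projection $P\in L(H_A)$ with $P(H_A)=H_0$. By Lemma \ref{Lelambda}, $\Psi_0$ sends approximate identities of $K(H_A)$ to approximate identities of $K(H_A\hspace{-0.03in}\bullet\hspace{-0.03in} M)$, so it extends to a $*$-homomorphism $L(H_A)=M(K(H_A))\to L(H_A\hspace{-0.03in}\bullet\hspace{-0.03in} M)$; by Propositions \ref{PTid} and \ref{PPsi}, $\Psi_0(P)(x\hspace{-0.03in}\bullet\hspace{-0.03in} b)=P(x)\hspace{-0.03in}\bullet\hspace{-0.03in} b$. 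Hence $Q:=\Psi_0(P)$ is a projection in $L(H_A\hspace{-0.03in}\bullet\hspace{-0.03in} M)$ with range (the image of) $H_0\hspace{-0.03in}\bullet\hspace{-0.03in} M$ and $Q\xi_0=\xi_0$ (cf. the proof of Proposition \ref{Ladd}). Applying Lemma \ref{4LL1} inside $H_A\hspace{-0.03in}\bullet\hspace{-0.03in} M$ (whose ${\cal T}_s$ restricts to that of $H_0\hspace{-0.03in}\bullet\hspace{-0.03in} M$) gives $y\in H_A$ with $\|y\|\le\|\xi_0\|$ and $p_v(\xi_0-y)$ small for $v\in V$. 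Put $z:=Qy\in H_0\subset H$; then $\|z\|\le\|y\|\le\|\xi_0\|\le\|\xi\|$, and since $Q$ is a projection $\la Q(y-\xi_0),Q(y-\xi_0)\ra\le\la y-\xi_0,y-\xi_0\ra$, so $p_v(z-\xi_0)\le p_v(y-\xi_0)$ for every $v$. Finally $p_v(\xi-z)\le p_v(\xi-\xi_0)+p_v(\xi_0-z)\le\|\xi-\xi_0\|\,\|v\|+p_v(y-\xi_0)<\ep$ for $v\in V$ once $\|\xi-\xi_0\|$ and the tolerance in Lemma \ref{4LL1} are chosen small enough. This produces the desired $z$, which proves the theorem.

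I expect the main obstacle to be bookkeeping rather than a new idea: one must verify that the embedding $H_0\hspace{-0.03in}\bullet\hspace{-0.03in} M\hookrightarrow H_A\hspace{-0.03in}\bullet\hspace{-0.03in} M$ coming from Kasparov's identification is compatible with the intrinsic embedding $H_0\hspace{-0.03in}\bullet\hspace{-0.03in} M\hookrightarrow H\hspace{-0.03in}\bullet\hspace{-0.03in} M$, and that the three copies of ${\cal T}_s$ (on $H_0\hspace{-0.03in}\bullet\hspace{-0.03in} M$, on $H_A\hspace{-0.03in}\bullet\hspace{-0.03in} M$, and on $H\hspace{-0.03in}\bullet\hspace{-0.03in} M$) all agree under these identifications. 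This is precisely what Proposition \ref{Psmall} (functoriality of $(-)\hspace{-0.03in}\bullet\hspace{-0.03in} M$ in the submodule) and the formula $\Psi_0(T)(x\hspace{-0.03in}\bullet\hspace{-0.03in} b)=T(x)\hspace{-0.03in}\bullet\hspace{-0.03in} b$ of Proposition \ref{PPsi} are there to handle; everything else is an $\ep/2$ estimate together with the assembly of a net.
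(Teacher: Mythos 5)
Your argument is correct, and it reaches the conclusion by a route that differs from the paper's in one structural respect. The paper splits the proof into two stages: it first treats countably generated $H$ (embedding $H$ into $H_A$ via Kasparov, pulling the approximant of Lemma \ref{4LL1} back through the projection $Q=\Psi_0(P)$, and controlling the error with the pairing estimates of Lemma \ref{4LL+1}), and then handles general $H$ by choosing an approximate identity $\{E_\lambda\}$ for $K(H)$, using Lemma \ref{Lelambda} to get $\|\Psi_0(E_\lambda)(\xi)-\xi\|<\ep/4$ with $\Psi_0(E_\lambda)(\xi)$ landing in the countably generated $H_\lambda\hspace{-0.03in}\bullet\hspace{-0.03in} M.$ You collapse these two stages into one: since $(H\otimes M)/N$ is norm-dense in $H\hspace{-0.03in}\bullet\hspace{-0.03in} M,$ a norm-approximant $\xi_0=\sum_i x_i\hspace{-0.03in}\bullet\hspace{-0.03in} b_i$ already lives in $H_0\hspace{-0.03in}\bullet\hspace{-0.03in} M$ for the finitely generated submodule $H_0=\overline{\sum_i x_iA},$ so Kasparov plus Lemma \ref{4LL1} can be applied at once, and the approximate-identity step becomes unnecessary. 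The remaining ingredients are shared: both proofs rest on Lemma \ref{4LL1} for $H_A,$ on the identity $\Psi_0(P)(x\hspace{-0.03in}\bullet\hspace{-0.03in} b)=P(x)\hspace{-0.03in}\bullet\hspace{-0.03in} b,$ and on the fact that cutting by the projection does not increase the relevant quantity (you phrase this as $\la Qw,Qw\ra\le\la w,w\ra$ applied to the seminorms $p_v$; the paper phrases it as $\la y,Q(\xi)-Q(z)\ra=\la Q(y),\xi-z\ra$). Your systematic use of the seminorms $p_v(w)=\omega_v(\la w,w\ra)^{1/2},$ which are subadditive and dominated by $\|w\|\,\|v\|,$ lets you run all the triangle inequalities in one place and also makes the compatibility of the three copies of ${\cal T}_s$ automatic, since $p_v$ depends only on the inner product, which Proposition \ref{Psmall} preserves. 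What the paper's longer route buys in exchange is the intermediate statement for countably generated $H$ and the quantitative control via $E_\lambda,$ both of which are reused elsewhere (e.g.\ in the proofs of Theorem \ref{MMT} and Theorem \ref{S4-mt}); as a self-contained proof of Theorem \ref{LP1} alone, your version is leaner. The only points worth making explicit in a final write-up are (i) that Lemma \ref{4LL1} (whose proof gives $\|x\|\le\|\xi\|,$ not merely $\|x\|\le 1$) supplies $y$ with $\|y\|\le\|\xi_0\|,$ and (ii) that the extension of $\Psi_0$ to $L(H_A)=M(K(H_A))$ is a $*$-homomorphism, so that $\Psi_0(P)$ is genuinely a self-adjoint idempotent; both are implicit in \ref{DB(H)}, \ref{PPsi} and in the paper's own use of $\Psi_0(Q)$ in this proof.
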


\begin{proof}
Let $\xi\in H\hspace{-0.03in}\bullet\hspace{-0.03in} M$ with $\|\xi\|\le 1.$

Let us first assume that $H$ is countably generated $A$-module.
By Lemma \ref{4LL+1}, it  suffices to show that, for any $\ep>0$  and
any finite subset $V\subset X,$ there exists $z\in H$ with $\|z\|\le 1$ such 
that
\beq
\|\la y,  \xi-z\ra)(v)\|<\ep\rforal y\in H \,\,{\rm with}\,\, \|y\|\le1\andeqn v\in V.
\eneq
To simplify notation, we may also assume 
that $\|v\|\le 1$ for all $v\in V.$

By Kasparov's absorbing theorem (Theorem 2 of \cite{Ka}),
we may write $H_A=H\oplus H^\perp.$ 
It follows that $$H_A\hspace{-0.03in}\bullet\hspace{-0.03in} M=H\hspace{-0.03in}\bullet\hspace{-0.03in} M\oplus H^\perp \hspace{-0.03in}\bullet\hspace{-0.03in} M.$$ 
Define $Q: H_A\to H$ to be the projection.
Then $Q\in L(H_A)=M(K(H_A)).$  We identify $Q$ with $\Psi_0(Q)$
in the sense that $Q\in L(H_M)$ which extends $Q|_{H_A}.$ 
In particular, $H\hspace{-0.03in}\bullet\hspace{-0.03in} M=Q(H_A\hspace{-0.03in}\bullet\hspace{-0.03in} M).$ 

By applying Lemma \ref{4LL1} and Lemma \ref{4LL+1}, we obtain $z\in H_A$ with $\|z\|\le \|\xi\|$
such that
\beq
\|\la y, \xi-z\ra(v)\|<\ep\rforal y\in H\hspace{-0.03in}\bullet\hspace{-0.03in} M,\,\,\|y\|\le 1\andeqn v\in  V.
\eneq
Note $Q(\xi)=\xi$ and $Q(y)=y$ for all $y\in H.$ Put $x=Q(z)\in H.$ 
We have 
\beq 
\hspace{-0.2in} \|\la y, \xi-x\ra(v)\|=\|\la y,Q( \xi)-Q(z)) \ra(v)\|=\|\la Q(y), \xi-z\ra (v)\|=\|\la y, \xi-z\ra(v)\|<\ep.
 \eneq
This proves the case that $H$ is countably generated.

Next we let $H$ be a general Hilbert $A$-module.  We will show that,
for any $\ep>0$  and
any finite subset $V\subset X,$ there exists $z\in H$ with $\|z\|\le 1$ such 
that
\beq
\|\la \xi-z,  \xi-z\ra)(v)\|<\ep\rforal  v\in V.
\eneq
Again, we may also assume 
that $\|v\|\le 1$ for all $v\in V.$

Let $\{E_\lambda\}$ be an approximate identity for $K(H).$
Then, as in the proof of Theorem \ref{MMT}, $H_\lambda=\overline{E_\lambda(H)}$ is countably generated
for each $\lambda.$ 
It follows from Lemma \ref{Lelambda} that there is $\lambda$ such that
\beq\label{LP1-201}
\|\Psi_0(E_\lambda)(\xi)-\xi\|<\ep/4.
\eneq
Fix such  a $\lambda.$   Note that, by Proposition \ref{PTid},  $\Psi_0(E_\lambda)(\xi)\in H_\lambda \hspace{-0.03in}\bullet\hspace{-0.03in} M\subset 
H\hspace{-0.03in}\bullet\hspace{-0.03in} M.$ 
Since $H_\lambda$ is countably generated, by the first part of the proof, 
we obtain $x\in H_\lambda$ with $\|x\|\le \|\Psi_0(E_\lambda)(\xi)\|\le \|\xi\|$ such 
that
\beq\label{LP1-202}
\sup\{\|\la y, \Psi_0(E_\lambda)(\xi)-x\ra (v)\|: y\in H\hspace{-0.03in}\bullet\hspace{-0.03in} M,\,\,\|y\|\le 1\}<\ep/4.
\eneq
Then, applying \eqref{LP1-201} and  then \eqref{LP1-202},
\beq
\hspace{-0.4in}\|\la \xi-x, \xi-x\ra(v)\| &\le &\|\la\xi-x, \xi-\Psi_0(E_\lambda)(\xi)\ra(v)\|+
\|\la \xi-x, \Psi_0(E_\lambda)(\xi)-x\ra(v)\|\\
&<& 2\| \xi-\Psi_0(E_\lambda)(\xi)\|
+2\|\la {\xi-x\over{2}}, \Psi_0(E_\lambda(\xi)-x\ra(v)\|\\
&<& \ep/2+\ep/2=\ep.
\eneq
\end{proof}
 We then obtain the following corollary as a Kaplansky density theorem:
 
  \begin{thm}\label{P1}
 Let $A$ be a \CA\, and $H$ be a Hilbert $A$-module.
 Then the unit ball of $H$ is dense in  the unit ball  of 
 $H\hspace{-0.03in}\bullet\hspace{-0.03in} A^{**}$ in  ${\cal T}_{su}.$
\end{thm}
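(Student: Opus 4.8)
The plan is to reduce Theorem \ref{P1} to Theorem \ref{LP1} by choosing the right concrete representation of $A$. Recall that $A^{**}$ is the von Neumann algebra generated by $A$ in its universal representation $\pi_U$ on $H_U$; that is, writing $A\subset B(H_U)$ via $\pi_U$, we have $A^{**}=\overline{A}^{SOT}$ inside $B(H_U)$, and $1_{A^{**}}={\rm id}_{H_U}$ in the sense of Definition \ref{MMSOT} (here we use that $A$ acts nondegenerately on $H_U$, so that an approximate identity of $A$ converges strongly to ${\rm id}_{H_U}$). Thus the pair $(A, A^{**})$ is exactly an instance of the pair $(A, M)$ with $M=\overline{A}^{SOT}$, ${\rm id}_X\in M$, appearing in Theorem \ref{LP1}, with $X=H_U$. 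Moreover, the topology ${\cal T}_{su}$ of Definition \ref{DTs} is by definition the topology ${\cal T}_s$ built from this particular $X=H_U$ and $M=A^{**}$: a bounded net $\{z_\af\}$ in $H\hspace{-0.03in}\bullet\hspace{-0.03in} A^{**}$ converges to $\xi$ in ${\cal T}_{su}$ iff $\|\pi_U(\la \xi-z_\af,\xi-z_\af\ra)(v)\|\to 0$ for all $v\in H_U$.

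So the main step is simply to invoke Theorem \ref{LP1} with $X=H_U$, $A\subset B(H_U)$ the universal representation, and $M=A^{**}=\overline{A}^{SOT}$. That theorem states that the unit ball of $H$ is dense in the unit ball of $H\hspace{-0.03in}\bullet\hspace{-0.03in} M=H\hspace{-0.03in}\bullet\hspace{-0.03in} A^{**}$ in ${\cal T}_s$, which is precisely ${\cal T}_{su}$ in this case. Concretely: given $\xi\in H\hspace{-0.03in}\bullet\hspace{-0.03in} A^{**}$ with $\|\xi\|\le 1$, Theorem \ref{LP1} produces, for each $\ep>0$ and each finite subset $V\subset H_U$, an element $z\in H$ with $\|z\|\le 1$ and $\|\la \xi-z,\xi-z\ra(v)\|<\ep$ for all $v\in V$; organizing these $z$'s over the directed set of pairs $(\ep, V)$ gives a net $\{x_\af\}$ in the unit ball of $H$ with $\lim_\af\|\la \xi-x_\af,\xi-x_\af\ra(v)\|=0$ for every $v\in H_U$, i.e.\ $x_\af\to\xi$ in ${\cal T}_{su}$. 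This establishes the density of the unit ball of $H$ in the unit ball of $H\hspace{-0.03in}\bullet\hspace{-0.03in} A^{**}$.

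There is essentially no obstacle here: the content is all in Theorem \ref{LP1} (and behind it Lemma \ref{4LL1}, which in turn rests on Proposition \ref{HAB} and the classical Kaplansky density theorem for $M_n(A)\subset M_n(M)$), together with Kasparov's absorbing theorem used to pass from $H_A$ to a general countably generated $H$ and then an approximate-identity argument for general $H$. The only point worth spelling out in the write-up is the verification that the pair $(A,A^{**})$ really does fit the hypotheses of Theorem \ref{LP1} — namely that, in the universal representation, $\overline{A}^{SOT}=A^{**}$ and ${\rm id}_{H_U}=1_{A^{**}}\in A^{**}$, which is standard (see Definition \ref{MMSOT}) — and the tautological identification of ${\cal T}_{su}$ with the relevant instance of ${\cal T}_s$. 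Hence the theorem follows at once from Theorem \ref{LP1}.
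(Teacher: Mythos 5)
Your proposal is correct and is exactly the paper's intended argument: Theorem \ref{P1} is stated as a corollary of Theorem \ref{LP1}, obtained by taking $X=H_U$ the universal representation space, $M=\overline{A}^{SOT}=A^{**}$ with ${\rm id}_{H_U}=1_{A^{**}}$, and observing that ${\cal T}_{su}$ is by definition the corresponding instance of ${\cal T}_s$. Your added verification that the pair $(A,A^{**})$ satisfies the hypotheses of Theorem \ref{LP1} is the only content, and it is handled correctly.
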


\section{Closeness of $H$}

Let $H$ be a Hilbert $A$-module., 
Then, by Theorem 6.1 of \cite{BL},  the unit ball  of $H$ is $A$-weakly dense (see Definition 3.3 of \cite{BL}) in the unit ball of $H^\sharp,$ 
i.e., for any $f\in H^\sharp,$ there is a net $\{x_\af\}$ in $H$ with 
$\|x_\af\|\le \|f\|$ such that
$\lim_\af \|\la f-x_\af, y\ra\|=0$ for all $y\in H.$  In the case that $A$ is a $W^*$-algebra, 
$H^\sharp$ is a Hilbert $A$-module. One may ask whether 
one can find the net $\{x_\af\},$ in addition, $\lim_\af \|\la f-x_\af, \xi\ra\|=0$ for all $\xi\in H^\sharp?$


We begin with the following example.
\begin{exm}\label{Ex2}
Let $M$ be a $W^*$-algebra which contains a self-adjoint element $a$ with infinite spectrum.
Then, by the spectral theory, one obtains a sequence of mutually orthogonal non-zero projections 
$p_1, p_2,...,p_n,....$ 
Let $H=H_M$ and 
let $\xi=\{p_n\}\in H_M^\sharp.$ 
Note that $\|\xi\|=\|\sum_{n=1}^\infty p_n\|=1$
(the convergence is in the strong operator topology and weak*-topology of $M$).
We claim that there is {\it no} net $\{x_\af\}$ in $H_M$ 
such that
\beq
\lim_\af \|\la \xi-x_\af, \xi\ra\|=0.
\eneq
Otherwise, 
there would be  $x\in H_M$ such that
\beq\label{Ex-101}
\|\la \xi-x, \xi\ra\|<1/4.
\eneq
Since $x=\{a_n\}\in H_M,$ there is $N\in \N$ such that
\beq
\|\sum_{N+1}a_n^*a_n\|<(1/16(1+\|x\|))^2.
\eneq
Choose $q=\sum_{n=N+1}^\infty p_n\in  M.$ 
Define $P_N: H_M^\sharp\to M^{(N)}=\{(b_1, b_2,...,b_N): b_i\in M\}$ to be 
the projection. 
Then
\beq
\|\la \xi-P(x), \xi\ra\| &\le &\|\la \xi-x, \xi\ra \|+\|\la (1-P_N)(x), \xi\ra\|\\
&<& 1/4+\|(1-P_N)(x)\|\|\xi\|<1/4+1/16=5/16.
\eneq
On the other hand 
\beq
5/16&\ge& \|\la \xi-P(x), \xi\ra\|\ge \|\la \xi-P(x), \xi\ra q\|\\
  &&=\|(\sum_{N+1}^\infty p_n-\sum_{i=1}^N (p_i-a_i)^* p_i)q\|\\
&=&\|\sum_{N+1}^\infty p_n q\|=1.
\eneq
A contradiction.  In  other words, the question at the beginning of this section is negative.
This also follows from Corollary \ref{TTcc} below. However, we think that the example above 
might also be  helpful.
%
\end{exm}

\begin{lem}\label{Lclose1}
Let $A$ be a \CA. 
Suppose that  $\xi\in H_A^\sharp$ and $\{x_\af\}$ is a bounded 
net such that
\beq
\lim_\af\|\xi(x)-x_\af(x)\|=0\tforal x\in H_A
\eneq
and   $\xi(x_\af):=\la \xi, x_\af\ra $ converges in norm. Then  $\xi\in H_A$ and $\la \xi, \xi\ra=\lim_\af\la \xi, x_\af\ra.$

\end{lem}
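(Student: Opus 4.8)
The plan is to split the statement into two parts. First I would identify $c:=\lim_\af \la\xi,x_\af\ra$ with the ``formal'' inner product $\la\xi,\xi\ra$, i.e. with the $\sigma$‑weakly convergent sum $\sum_k a_k^*a_k\in A^{**}$ (here $\xi=\{a_k\}$); this is essentially weak‑* bookkeeping and simultaneously gives the second assertion of the lemma. The harder part is then to use the extra information ``$c\in A$'' to conclude $\xi\in H_A$.

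For the first part, write $x_\af=\{b_k^{(\af)}\}$, let $P_n$ be the projection onto the first $n$ coordinates, and put $\sigma_n=\sum_{k\le n}a_k^*a_k=\la P_n\xi,\xi\ra\in A$. Since $P_n\xi\in H_A$, the hypothesis gives $\la P_n\xi,x_\af\ra\to\sigma_n$ in norm for each fixed $n$. For a positive $f\in A^*$ the series $\sum_k f(a_k^*a_k)$ converges (its partial sums are bounded by $\|f\|\|\xi\|^2$; cf. the estimates in \ref{DAsharp}), so applying the Cauchy--Schwarz inequality to the pre‑inner‑product $f(\la\cdot,\cdot\ra)$ on $H_A\hspace{-0.03in}\bullet\hspace{-0.03in}A^{**}$ yields
$$|f(\la(1-P_n)\xi,x_\af\ra)|^2\le f(\la(1-P_n)\xi,(1-P_n)\xi\ra)\,f(\la x_\af,x_\af\ra)\le M^2\|f\|\!\!\sum_{k>n}f(a_k^*a_k),$$
with $M=\sup_\af\|x_\af\|$; the right side tends to $0$ as $n\to\infty$, uniformly in $\af$. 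Combining this with $\la P_n\xi,x_\af\ra\to\sigma_n$ and $f(\sigma_n)\to f(\sum_k a_k^*a_k)$, I get $f(\la\xi,x_\af\ra)\to f(\sum_k a_k^*a_k)$ for every positive $f$, hence for every $f\in A^*$. Since also $\la\xi,x_\af\ra\to c$ in norm and $A^*$ separates $A^{**}$, it follows that $\la\xi,\xi\ra=c$, in particular $c\in A$; and once $\xi\in H_A$ is known, this is exactly the second conclusion.

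For the second part — $\xi\in H_A$ — the key point I would use is that $\la\xi,x_\af\ra=\xi(x_\af)$ lies in the range $\xi(H_A)$, which is a right ideal of $A$, so $c=\la\xi,\xi\ra$ lies in the norm‑closed right ideal $J:=\overline{\xi(H_A)}$. Letting $B$ be the hereditary $C^*$‑subalgebra $J\cap J^*$ of $A$ (so that $c\in B$, and $B$ has an approximate identity $\{e_\lambda\}$), one has $ce_\lambda,\ e_\lambda c,\ e_\lambda c e_\lambda\to c$ in norm, hence
$$\la \xi-\xi e_\lambda,\ \xi-\xi e_\lambda\ra=c-ce_\lambda-e_\lambda c+e_\lambda c e_\lambda\longrightarrow 0$$
in norm, i.e. $\xi=\lim_\lambda \xi e_\lambda$ in $H^\sim$. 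It then remains to check that $\xi e_\lambda\in H_A$ for each $\lambda$, whence $\xi\in\overline{H_A}=H_A$; for this one approximates $e_\lambda$ in norm by elements of $\xi(H_A)^*\xi(H_A)$ and shows that each $\xi\,\xi(y)^*\xi(z)$ ($y,z\in H_A$) actually lies in $H_A$, controlling its coordinate tails $\xi(z)^*\xi(y)\big(\sum_{k>n}a_k^*a_k\big)\xi(y)^*\xi(z)$ by Cauchy--Schwarz bounds of the type $\xi(y)\la\xi,\xi\ra\xi(y)^*\le\|y\|^2\|c\|\,\xi(y)\xi(y)^*$ together with the membership $c\in J$. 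I expect this last verification — reconciling the module structure of $H^\sim$ with the algebra structure of $A$ so as to land $\xi e_\lambda$ back inside $H_A$ — to be the main obstacle; everything else (the limit computations, the weak‑* manipulations, and, once $\xi\in H_A$, re‑deriving $\la\xi,\xi\ra=c$ by applying the pointwise hypothesis with $x=\xi$) is routine.
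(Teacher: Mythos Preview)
Your first part is fine and matches the paper: using the projections $P_n$ and Cauchy--Schwarz you correctly identify the norm limit $c=\lim_\af\la\xi,x_\af\ra\in A$ with the weak-* sum $\sum_k a_k^*a_k=\la\xi,\xi\ra$ in $A^{**}$.

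The second part, however, has a real gap --- and you sense it yourself. To show $\xi e_\lambda\in H_A$ (or $\xi\cdot\xi(y)^*\xi(z)\in H_A$) you must prove that the tails $d^*\big(\sum_{k>n}a_k^*a_k\big)d$ tend to $0$ \emph{in norm} for suitable $d\in A$. But $d^*r_n d\le\|d\|^2 r_n$ with $r_n:=\sum_{k>n}a_k^*a_k$, so any such estimate reduces to $\|r_n\|\to 0$, which is exactly the statement $\xi\in H_A$ that you are trying to prove. Your proposed inequality $\xi(y)\la\xi,\xi\ra\xi(y)^*\le\|y\|^2\|c\|\,\xi(y)\xi(y)^*$ (besides the spurious $\|y\|^2$) only gives a uniform bound, not decay; and ``$c\in J$'' alone does not force the partial sums to converge in norm. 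I do not see how to close this loop without essentially reproving the norm convergence of $\sum a_k^*a_k$.

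The paper avoids the whole detour with a one-line application of Dini's theorem. Your first part already gives, for every state $f$ of $A$, that $f\big(\sum_{k\le n}a_k^*a_k\big)\nearrow f(c)$. The functions $\tau\mapsto\tau\big(\sum_{k\le n}a_k^*a_k\big)$ are continuous on the weak-* compact state space $S(A)$, they increase, and their pointwise limit $\tau\mapsto\tau(c)$ is continuous because $c\in A$. Dini therefore yields uniform convergence on $S(A)$, i.e.\ $\big\|\sum_{k\le n}a_k^*a_k-c\big\|\to 0$. This is precisely $\xi\in H_A$ and $\la\xi,\xi\ra=c$, and no hereditary-subalgebra or approximate-identity machinery is needed.
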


\begin{proof}
Write $\xi=\{b_n\}$ and $x_\af=\{a_{\af, n}\},$
where $\{b_n\}\in H_A^\sharp,$  $a_{\af, n}\in A$
and, for each $\af, $ $\{a_{\af , n}\}\in H_A.$

Put 
\beq
M=1+\sup\{\|x_\af\|: \af\}+\|\xi\|<\infty\andeqn a=\lim_\af\la x_\af, \xi\ra.
\eneq
Note $\xi(x_\af)=\la \xi,x_\af\ra\in A$ for all $\af.$ Hence $a\in A.$ 

Put $P_n: H_A^\sharp\to H_n:=A^{(n)}$ be the projection to the first $n$ copies of 
$A.$ Then $P_n\xi\in H_n\subset H_A.$
It follows that
\beq\label{Lclose1-101}
\lim_\af\la x_\af, P_n(\xi)\ra=\la \xi, P_n(\xi)\ra =\sum_{j=1}^n b_j^*b_j.
\eneq
Fix $f\in A^*.$ Let $\ep>0.$
By Lemma \ref{L21}, since $\{x_\af\}$ is bounded, there is an integer $N\in \N$ such that, for all $n\ge N,$
\beq
|f(\la x_\af, \xi\ra)-f(x_\af, P_n(\xi)\ra)|<\ep/3\rforal \af.
\eneq
Fix any $n\ge N.$
Choose $\af_0$ such that, for all $\af\ge \af_0,$
\beq
&&\|\la x_\af, P_n(\xi)\ra-\sum_{j=1}^n b_j^*b_j\|<\ep/3(1+\|f\|)\andeqn\\
&&\|\la  x_\af, \xi\ra-a\|<\ep/3(1+\|f\|).
\eneq
It follows that, for  all $n\ge N,$ 
\beq
|f(a)-f(\sum_{j=1}^n b_j^*b_j)| &\le& |f(a-\la x_\af, \xi\ra)|+|f(\la x_\af, \xi\ra)-f(x_\af, P_n(\xi)\ra)|\\
&&+\|f\|\|\la x_\af, P_n(\xi)\ra-\sum_{j=1}^n b_j^*b_j\|<\ep/3+\ep/3+\ep/3=\ep.
\eneq
Hence on the state space $S(A)$ of $A,$
\beq
\lim_{n\to\infty} f(\sum_{j=1}^n b_j^*b_j)=f(a).
\eneq
On the compact space $S(A)$ (in the weak *-topology), $\hat{a}(f)=f(a)$ for all $f\in S(A)$ is a continuous 
function, and $\widehat{\sum_{j=1}^n b_j^*b_j}$ is increasing.
By the Dini theorem, $\widehat{\sum_{j=1}^n b_j^*b_j}$ converges uniformly to $\hat{a}$ 
on $S(A).$   It follows that
\beq
\sum_{j=1}^n b_j^*b_j\to a
\eneq
in norm.   This implies that $\xi=\{b_n\}\in H_A$ and $\la \xi, \xi\ra=a=\lim_\af \la \xi, x_\af\ra.$
%
\end{proof}

\begin{prop}\label{PKHsharp}
Let $A$ be a \CA\, and $H$ be a Hilbert $A$-module.
Then, for any $T\in K(H),$  one has
$\Psi_0(T)(H^\sharp)\subset H,$
where $\Psi_0$ is Definition \ref{Dmap2}.

\end{prop}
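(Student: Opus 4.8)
The plan is to reduce to the case of a rank-one (or finite-rank) module map, where the statement can be checked by direct computation, and then pass to the norm closure $K(H)$ using the fact that $\|\Psi_0(T)\| = \|T\|$ together with the completeness of $H$ (as a closed submodule of $H^\sharp$, viewed inside $H^\sim$). First I would recall that for $T = \theta_{x,y} \in F(H)$ and any $f \in H^\sharp$, the element $\Psi_0(\theta_{x,y})(f)$ equals $\iota(x)\la \iota(y), f\ra$ by the definition in \ref{Dmap2}, where we view $f$ as an element of $H^\sim$ via $\iota$. Now $\la \iota(y), f\ra = \la y, f\ra_{H^\sim}$, and since $f \in H^\sharp$ and $y \in H$, this inner product lies in $A$ (indeed, under the identification $\iota: H^\sharp \to B(H,A^{**})$ and the extension of $\iota$ in \ref{Dmap1}, $\la y, f\ra$ is just $f(y)^* \in A$, or $f(y)$, up to the conjugation convention). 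Hence $\Psi_0(\theta_{x,y})(f) = x\hspace{-0.03in}\bullet\hspace{-0.03in}\, a$ for some $a \in A$, which is $\iota(x \cdot a) \in \iota(H) = H$. By linearity, $\Psi_0(S)(H^\sharp) \subset H$ for all $S \in F(H)$.

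Next I would extend this to all of $K(H)$. Fix $T \in K(H)$ and $f \in H^\sharp$ with $\|f\| \le 1$; we want $\Psi_0(T)(f) \in H$. Choose $S_n \in F(H)$ with $\|T - S_n\| \to 0$. Then $\Psi_0(S_n)(f) \in H$ and, since $\Psi_0$ is an isometry on $K(H)$ (\ref{Dmap2}) and $\Psi_0(T)$ is a bounded module map on $H^\sim$, we get
\[
\|\Psi_0(T)(f) - \Psi_0(S_n)(f)\| \le \|\Psi_0(T) - \Psi_0(S_n)\|\,\|f\| = \|T - S_n\| \to 0.
\]
So $\Psi_0(S_n)(f)$ is a Cauchy sequence in $H^\sim$ converging to $\Psi_0(T)(f)$; since each $\Psi_0(S_n)(f) \in H$ and $H$ is complete (hence closed) in $H^\sim$, the limit $\Psi_0(T)(f)$ lies in $H$. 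This is the whole argument; the only point requiring a little care is the first step's bookkeeping with the conjugation conventions relating $H^\sharp$, its image under $\iota$ in $H^\sim$, and the $A^{**}$-valued inner product — specifically verifying that $\la \iota(y), \iota(f)\ra \in A$ (not merely in $A^{**}$) when $y \in H$ and $f \in H^\sharp$. This follows because, by the very definition of $\iota$ on $H^\sharp$ in \ref{Dmap1} and the identification of $H^\sim$ with $B(H,A^{**})$, the pairing $\la y, f\ra_{H^\sim}$ reproduces the value $f(y) \in A$ up to adjoint.

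I expect the main (really the only) obstacle to be precisely this conjugation/identification bookkeeping: one must be careful that "$f$ viewed in $H^\sim$" pairs with $\iota(y)$ for $y \in H$ to give back an element of $A \subset A^{**}$, so that $x \hspace{-0.03in}\bullet\hspace{-0.03in}\, \la \iota(y), f\ra = \iota(x \cdot \la y, f\ra) \in \iota(H)$. Once that identity is recorded (it is essentially immediate from \ref{Dmap1} and Theorem 4.2 of \cite{Pa}), the rank-one case and the density/completeness argument are routine. No appeal to the deeper results of the paper (Theorem \ref{MMT}, etc.) is needed — only the definitions in \ref{Dmap1}, \ref{Dmap2} and the isometry property of $\Psi_0$ on $K(H)$.
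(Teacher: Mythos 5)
Your proposal is correct and follows essentially the same route as the paper: the paper computes $\Psi_0(\sum_i\theta_{x_i,y_i})(\xi)=\sum_i x_i\la y_i,\xi\ra=\sum_i \xi(y_i)^*x_i\in H$ for finite-rank maps and then invokes density of $F(H)$ in $K(H)$. Your write-up merely makes explicit the closing step the paper leaves implicit (the isometry of $\Psi_0$ plus closedness of $H$ in $H^\sim$), and your bookkeeping of $\la \iota(y),f\ra=f(y)^*\in A$ matches the paper's computation.
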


\begin{proof}
Suppose that $T\in F(H)$ 
and 
$
T=\sum_{i=1}^m \theta_{x_i, y_i}
$
for some $x_i, y_i\in H,$ $i=1,2,...,m.$ 
Then, for any $\xi\in H^\sharp,$
\beq
\Psi_0(T)(\xi)=\sum_{i=1}^m x_i\la y_i, \xi\ra=\sum_{i=1}^m \xi(y_i)^*x_i\in H.
\eneq
Since $F(H)$ is dense in $K(H),$ 
this implies that $\Psi_0(T)(H^\sharp)\subset H.$
\end{proof}

\begin{lem}\label{Lfuniform}
Let $A$ be a \CA, $H$ be a Hilbert $A$-module and $\{E_\lambda\}$ 
an approximate identity for $K(H).$ 
Then, for any $\xi\in H^\sim$ and any $f\in A^*,$
\beq
\lim_\af \sup\{f(\la \xi-\Psi_0(E_\lambda)(\xi), y\ra)|: y\in H^\sim, \|y\|\le 1\}=0.
\eneq
\end{lem}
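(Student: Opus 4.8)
The plan is to turn the desired uniform estimate into a single scalar limit supplied by Proposition \ref{PBL}, the uniformity over the unit ball of $H^\sim$ being extracted by a Cauchy--Bunyakovsky--Schwarz argument together with an operator inequality.

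First I would normalize: replace $\{E_\lambda\}$ by a positive, increasing approximate identity of norm $\le 1$ for $K(H),$ so that by \ref{MMSOT} one has $E_\lambda\nearrow 1_{K(H)^{**}}$ in the weak*-topology of $K(H)^{**}.$ Write $T_\lambda:=\tilde\Psi_0(E_\lambda)=\Psi(E_\lambda)\in K(H^\sim)\subset B(H^\sim)$ (the two agree since $E_\lambda\in K(H)\subset B(H)$ and $\Psi|_{B(H)}=\tilde\Psi_0$). Because $\tilde\Psi_0|_{K(H)}$ is a $*$-\hm\, of \CA s into $B(H^\sim)=L(H^\sim),$ we get $T_\lambda=T_\lambda^*$ and $0\le T_\lambda\le {\rm id}_{H^\sim};$ hence $R_\lambda:={\rm id}_{H^\sim}-T_\lambda\in L(H^\sim)$ is positive, self-adjoint, $\le {\rm id}_{H^\sim},$ and $R_\lambda-R_\lambda^2=R_\lambda({\rm id}_{H^\sim}-R_\lambda)\ge 0,$ i.e.\ $R_\lambda^2\le R_\lambda.$ Since $\xi={\rm id}_{H^\sim}(\xi),$ the quantity to be estimated is $\sup\{|f(\la R_\lambda(\xi),y\ra)|: y\in H^\sim,\ \|y\|\le 1\}.$

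Next, for a positive $f\in A^*$ (identified with its unique normal positive extension to $A^{**}$), the pairing $(u,v)\mapsto f(\la u,v\ra)$ is a positive semi-definite sesquilinear form on $H^\sim,$ so Cauchy--Bunyakovsky--Schwarz gives, for $\|y\|\le 1,$
$|f(\la R_\lambda(\xi),y\ra)|^2\le f(\la R_\lambda(\xi),R_\lambda(\xi)\ra)\,f(\la y,y\ra)\le \|f\|\,f(\la R_\lambda(\xi),R_\lambda(\xi)\ra).$
Now $\la R_\lambda(\xi),R_\lambda(\xi)\ra=\la \xi,R_\lambda^2(\xi)\ra,$ and since $R_\lambda-R_\lambda^2\ge 0$ in $L(H^\sim),$ writing $R_\lambda-R_\lambda^2=C^*C$ with $C\in L(H^\sim)$ we get $\la\xi,(R_\lambda-R_\lambda^2)(\xi)\ra=\la C(\xi),C(\xi)\ra\ge 0$ in $A^{**},$ so $\la\xi,R_\lambda^2(\xi)\ra\le\la\xi,R_\lambda(\xi)\ra;$ applying the normal positive $f$ and using $T_\lambda=T_\lambda^*$ yields $f(\la R_\lambda(\xi),R_\lambda(\xi)\ra)\le f(\la\xi,R_\lambda(\xi)\ra)=f(\la\xi,\xi\ra)-f(\la T_\lambda(\xi),\xi\ra).$ Finally, Proposition \ref{PBL} applied to $E_\lambda\to 1_{K(H)^{**}}$ in the weak*-topology, combined with $\Psi(1_{K(H)^{**}})=\tilde\Psi_0({\rm id}_H)={\rm id}_{H^\sim}$ (Proposition \ref{PPsi}), gives $f(\la T_\lambda(\xi),\xi\ra)=f(\la\Psi(E_\lambda)(\xi),\xi\ra)\to f(\la\xi,\xi\ra).$ Hence $f(\la R_\lambda(\xi),R_\lambda(\xi)\ra)\to 0,$ and the displayed inequalities give the claimed uniform limit for every positive $f\in A^*;$ for general $f$ one writes $f$ as a linear combination of four positive functionals and uses subadditivity of the supremum.

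The main obstacle is precisely passing from the weak*-convergence provided by Proposition \ref{PBL} to a \emph{uniform} estimate over $\{y:\|y\|\le1\}$ — an issue because weak* limits are not uniform. The Cauchy--Schwarz step resolves it: the key mechanism is the operator inequality $R_\lambda^2\le R_\lambda,$ which collapses $\sup_y|f(\la R_\lambda(\xi),y\ra)|$ into (a constant multiple of) the square root of the single scalar $f(\la\xi,\xi\ra)-f(\la T_\lambda(\xi),\xi\ra).$ Everything else — the normalization of $\{E_\lambda\},$ the identification $\tilde\Psi_0(E_\lambda)=\Psi(E_\lambda),$ and the positivity bookkeeping — is routine.
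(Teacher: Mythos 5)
Your argument is correct, and its skeleton is the same as the paper's: both proofs collapse the supremum over the unit ball via the Cauchy--Bunyakovsky--Schwarz inequality for the form $(u,v)\mapsto f(\la u,v\ra)$, reducing everything to showing that the single scalar $f(\la \xi-\Psi_0(E_\lambda)(\xi),\,\xi-\Psi_0(E_\lambda)(\xi)\ra)$ tends to zero. Where you diverge is in how that scalar limit is obtained. The paper passes to the universal representation of $K(H\hspace{-0.03in}\bullet\hspace{-0.03in} A^{**})$, argues that $(1-\Psi_0(E_\lambda))^2\to 0$ in the strong operator topology (hence, being bounded, in the weak*-topology), and then invokes Proposition \ref{s2-adp} to transfer this to the inner product. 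You instead use the operator inequality $R_\lambda^2\le R_\lambda$ for $R_\lambda={\rm id}_{H^\sim}-\tilde\Psi_0(E_\lambda)$ (valid since $0\le \tilde\Psi_0(E_\lambda)\le {\rm id}_{H^\sim}$) to dominate $f(\la R_\lambda(\xi),R_\lambda(\xi)\ra)$ by $f(\la\xi,\xi\ra)-f(\la\tilde\Psi_0(E_\lambda)(\xi),\xi\ra)$, and then need only the first-order weak*-convergence $E_\lambda\to 1_{K(H)^{**}}$ supplied by Proposition \ref{PBL} together with $\Psi(1_{K(H)^{**}})={\rm id}_{H^\sim}$ from Proposition \ref{PPsi}. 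This buys you something: weak*-convergence is not multiplicative, so the paper must make the detour through the strong operator topology precisely to handle the square, whereas your inequality sidesteps the square entirely. Two minor remarks: the positivity and contractivity of $E_\lambda$ that you "normalize" into place is already the standing convention for approximate identities in this paper (it is used the same way in Lemma \ref{Telambda}), so no genuine replacement of the net is needed; and the monotonicity $E_\lambda\nearrow 1$ is likewise not essential, since $f(E_\lambda)\to\|f\|$ for every positive $f$ gives the weak*-convergence for an arbitrary approximate identity.
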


\begin{proof}
By Lemma \ref{Lelambda}, $\{\Psi_0(E_\lambda)\}$ is an approximate identity for 
 $K(H\hspace{-0.03in}\bullet\hspace{-0.03in} A^{**}).$ 
 In the universal representation of  $K(H\hspace{-0.03in}\bullet\hspace{-0.03in} A^{**}),$ 
 $1-\Psi_0(E_\lambda)$ converges to zero in the strong operator topology. 
 Note that $\|1-\Psi_0(E_\lambda)\|\le 1.$ 
 Therefore $(1-\Psi_0(E_\lambda))(1-\Psi_0(E_\lambda))$ also converges to zero in the strong operator topology. Hence it converges to zero in the weak operator topology. Since $\{(1-\Psi_0(E_\lambda))^2\}$
 is bounded, it also converges to zero 
 in the weak*-topology of $K(H\hspace{-0.03in}\bullet\hspace{-0.03in} A^{**}).$ Recall that $(H\hspace{-0.03in}\bullet\hspace{-0.03in} A^{**})^\sharp=H^\sim.$
 It follows from Proposition 
 \ref{s2-adp}, for any $\xi\in H^\sim,$
 \beq\nonumber
\hspace{-0.2in} \lim_\af |f(\la \xi-\Psi_0(E_\lambda)(\xi), \xi-\Psi_0(E_\lambda)(\xi)\ra)|&=&
\lim_\af |f(\la \xi-F\circ \Psi_0(E_\lambda)(\xi), \xi-F\circ \Psi_0(E_\lambda)(\xi)\ra)\\
 &=&\lim_\af
| f(\la (1-F\circ \Psi_0(E_\lambda))^2(\xi), \xi\ra)|=0.
 \eneq
Suppose that $y\in H^\sim$ and $\|y\|\le 1.$
Then, for any positive linear functional $f\in A^*,$
\beq
f(\la \xi-\Psi_0(E_\lambda)(\xi), y\ra)^2&\le& f(\la \xi-\Psi_0(E_\lambda)(\xi), \xi-\Psi_0(E_\lambda)(\xi)\ra)
f(y^*y)\\
&\le &\|f\|f(\la \xi-\Psi_0(E_\lambda)(\xi), \xi-\Psi_0(E_\lambda)(\xi)\ra).
\eneq
It follows that, for any $f\in A^*,$ 
\beq
\lim_\af\sup\{f(\la \xi-\Psi_0(E_\lambda)(\xi), y\ra)|: y\in H^\sim, \|y\|\le 1\}=0.
\eneq
\end{proof}

\begin{thm}\label{Tclosed}
Let $A$ be a \CA\, and $H$ be a Hilbert  $A$-module.
Suppose that $\xi\in H^\sharp$ and there is a bounded net 
$\{x_\af\}$ in $H$ such that
\beq
\lim_\af\| \xi(x)-\la x_\af, x\ra\| =0\tforal x\in  H
\eneq
and $\xi(x_\af):=\la \xi, x_\af\ra $ converges in norm. Then $\xi\in H$ and $\la \xi, \xi\ra=\lim_\af\la \xi,x_\af\ra\in A.$
\end{thm}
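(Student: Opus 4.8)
The plan is to fix an increasing approximate identity $\{E_\lambda\}$ for $K(H)$ and to work with the elements $\xi_\lambda:=\Psi_0(E_\lambda)(\xi)$, where $\Psi_0$ is the map of Definition~\ref{Dmap2} acting on $H^\sim\supset H^\sharp$. By Proposition~\ref{PKHsharp} each $\xi_\lambda$ already lies in $H$; by Proposition~\ref{PPsi}, $\Psi_0(E_\lambda)$ restricts to $E_\lambda$ on $H$, is self‑adjoint in $L(H^\sim)$, and satisfies $0\le\Psi_0(E_\lambda)\le\mathrm{id}_{H^\sim}$. Consequently $\la\xi_\lambda,x_\af\ra=\la\xi,E_\lambda x_\af\ra\in A$, and $b_\lambda:=\la\xi,\xi_\lambda\ra=\la\xi,\Psi_0(E_\lambda)\xi\ra$ is a self‑adjoint element of $A$. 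Write $a:=\lim_\af\la\xi,x_\af\ra=\lim_\af\xi(x_\af)\in A$. I must prove (I) $\la\xi,\xi\ra=a$, and (II) $\xi\in H$; the displayed identity of the statement then follows.

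For (I) I would use an iterated‑limit (Moore--Osgood) argument. Fix a positive $f\in A^*$ and set $g(\lambda,\af):=f(\la\xi,E_\lambda x_\af\ra)=f(\la\xi_\lambda,x_\af\ra)$. For each fixed $\af$, $E_\lambda x_\af\to x_\af$ in norm (Lemma~3.1 of \cite{BL}), so $g(\lambda,\af)\to f(\xi(x_\af))=f(\la\xi,x_\af\ra)$, and this convergence is \emph{uniform in $\af$}: indeed
\[
|g(\lambda,\af)-f(\la\xi,x_\af\ra)|=|f(\la\xi-\xi_\lambda,x_\af\ra)|\le f(\la\xi-\xi_\lambda,\xi-\xi_\lambda\ra)^{1/2}\,f(\la x_\af,x_\af\ra)^{1/2}
\]
by Cauchy--Schwarz for the positive form $f(\la\cdot,\cdot\ra)$ on $H^\sim$, the second factor is at most $(\|f\|M^2)^{1/2}$ with $M:=\sup_\af\|x_\af\|+\|\xi\|+1$, and $f(\la\xi-\xi_\lambda,\xi-\xi_\lambda\ra)\to0$ by Lemma~\ref{Lfuniform}. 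For each fixed $\lambda$, $g(\lambda,\af)\to f(\la\xi,\xi_\lambda\ra)=f(b_\lambda)$ by the first hypothesis (using $\xi_\lambda\in H$). Hence by Moore--Osgood $\lim_\af f(\la\xi,x_\af\ra)=\lim_\lambda f(b_\lambda)$; the left side is $f(a)$ by the second hypothesis, and the right side is $f(\la\xi,\xi\ra)$ by Lemma~\ref{Lfuniform} (applied with test vector $\xi$). As positive functionals span $A^*$, this gives $\la\xi,\xi\ra=a$.

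For (II): the net $\{b_\lambda\}$ is increasing and norm‑bounded (by $\|\xi\|^2$) in $A_+$, and $f(b_\lambda)\to f(a)$ for every positive $f\in A^*$; regarding the $b_\lambda$ and $a$ as continuous functions on the compact state space $S(A)$, Dini's theorem yields $\|b_\lambda-a\|\to0$. Since $0\le\mathrm{id}-\Psi_0(E_\lambda)\le\mathrm{id}$ in $L(H^\sim)$ we have $(\mathrm{id}-\Psi_0(E_\lambda))^2\le\mathrm{id}-\Psi_0(E_\lambda)$, whence
\[
0\le\la\xi-\xi_\lambda,\xi-\xi_\lambda\ra\le\la\xi,(\mathrm{id}-\Psi_0(E_\lambda))\xi\ra=\la\xi,\xi\ra-b_\lambda=a-b_\lambda ,
\]
so $\|\xi-\xi_\lambda\|^2\le\|a-b_\lambda\|\to0$. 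Thus $\xi_\lambda\to\xi$ in norm; since each $\xi_\lambda\in H$ and $H$ is complete, hence closed in $H^\sim$, we get $\xi\in H$, and then $\la\xi,\xi\ra=a=\lim_\af\la\xi,x_\af\ra\in A$ by (I).

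The main obstacle is step (I): identifying the (a priori $A^{**}$‑valued) quantity $\la\xi,\xi\ra$ with the element $a\in A$, and in particular producing the uniform‑in‑$\af$ estimate needed to interchange the two iterated limits. This is exactly where Lemma~\ref{Lfuniform} — uniform convergence of $\xi-\Psi_0(E_\lambda)\xi$ on bounded subsets of $H^\sim$ — is indispensable, together with the fact that $\Psi_0(E_\lambda)\xi$ already lies in $H$. (When $H$ is countably generated one can alternatively embed $H$ as an orthogonal summand of $H_A$ by Kasparov's absorption theorem, extend $\xi$ by $0$ on $H^\perp$, invoke Lemma~\ref{Lclose1} in $H_A$, and observe that the $H^\perp$‑component of the resulting element is orthogonal to all of $H^\perp$, hence zero; the general case still needs the approximate‑identity reduction above.)
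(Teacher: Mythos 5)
Your proposal is correct. The first half --- the identity $\la\xi,\xi\ra=a$ --- is essentially the paper's own ``Claim'': both arguments rest on Proposition~\ref{PKHsharp} (so that $\Psi_0(E_\lambda)\xi\in H$), on Lemma~\ref{Lfuniform}, and on an interchange of the two limits (your Moore--Osgood formulation and the paper's $\ep/3$ estimate are the same computation). Where you genuinely depart from the paper is in deducing $\xi\in H$. The paper first proves the theorem for $H=H_A$ (Lemma~\ref{Lclose1}, a coordinatewise Dini argument on the partial sums $\sum_j b_j^*b_j$), passes to countably generated $H$ by Kasparov absorption, and in the general case extracts a sequence from the net, restricts $\xi$ to the countably generated submodule $H_0$ that sequence generates, and finishes with the projection $P\colon H^\sim\to H_0^\sim$ of Lemma~\ref{ext} together with the Claim to show $(1-P)\xi=0$. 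You instead apply Dini directly to the increasing, norm-bounded net $b_\lambda=\la\xi,\Psi_0(E_\lambda)\xi\ra$ in $A_+$ (whose values under positive functionals converge to those of $a$ by the Claim and Lemma~\ref{Lfuniform}) to get $\|a-b_\lambda\|\to0$, and then the operator inequality $(\mathrm{id}-\Psi_0(E_\lambda))^2\le\mathrm{id}-\Psi_0(E_\lambda)$ in $L(H^\sim)$ gives $\|\xi-\Psi_0(E_\lambda)\xi\|^2\le\|a-b_\lambda\|\to0$, so $\xi$ is a norm limit of elements of $H$ and hence lies in $H$. This is shorter, bypasses Kasparov absorption, Lemma~\ref{Lclose1} and the subsequence/projection step entirely, and yields the extra information that $\Psi_0(E_\lambda)\xi\to\xi$ in norm. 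Two small points to tidy up: the monotonicity of $b_\lambda$ does require the approximate identity to be chosen increasing (which you state), and when $A$ is non-unital the Dini argument should be run on the weak*-compact quasi-state space rather than on $S(A)$ --- the same adjustment is implicitly needed in the paper's Lemma~\ref{Lclose1}, so this is cosmetic.
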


\begin{proof}

First let us assume $H$ is countably generated. 
Then, by Kasparov's absorbing theorem (Theorem 2 of \cite{Ka}),
we may write $H_A=H\oplus H^\perp.$
Then $\xi\in H^\sharp\subset H_A^\sharp.$  
By applying Lemma \ref{Lclose1}, we obtain that 
\beq
\xi\in H_A\andeqn \la \xi, \xi\ra=\lim_\af\la \xi, x_\af\ra.
\eneq

Since $\xi(x_\af)\in A,$  then $a=\la \xi, \xi\ra\in A.$
Let $P: H_A\to H$ be the projection.
Then $P\in L(H_A).$  Put $\eta=P(\xi)\in H.$ 
Note that $\la P(\xi)-\xi, x\ra=0$ for all $x\in H$ and $\xi\in H^\sharp.$
Hence $\xi=\eta.$ 
Therefore this case follows.

In what follows we will work in $H^\sim$ and use the inner product in $H^\sim$
whenever it is convenient.

In general,  let $a=\lim_\af\la\xi, x_\af\ra.$   Since $\la \xi, x_\af\ra=\xi(x_\af)\in A$
for all $\af,$ we have $a\in  A.$ 

Claim:  $a=\la \xi, \xi\ra$ (in the inner product of $H^\sim$).

Let $\{E_\lambda\}$ be an approximate identity for $K(H).$
Let $\ep>0$ and $f\in A^*$ with $\|f\|\le 1.$ 
By applying Lemma \ref{Lfuniform}, have (since $\{\|\xi-x_\af\|\}$ is bounded)
\beq\label{34-1}
\lim_\lambda \big(\sup_\af\{ |f(\la \xi-\Psi_0(E_\lambda)(\xi),\xi- x_\af\ra )|\}\big)=0.
\eneq
Thus, by applying Lemma \ref{Lfuniform} and  \eqref{34-1}, we obtain $\lambda_0$ such that, for all $\lambda\ge \lambda_0,$
\beq\label{920-1}
&&|f(\la \xi-\Psi_0(E_\lambda)(\xi), \xi)|<\ep/3\andeqn\\\label{920-2}
&&|f(\la \xi-\Psi_0(E_\lambda)(\xi), \xi-x_\af\ra )|<\ep/3\rforal \af.
\eneq
Recall that, by Proposition \ref{PKHsharp}, $\Psi_0(E_\lambda)(\xi)\in H.$
Fix any $\lambda\ge \lambda_0,$ choose $\af_0$ such that, for any $\af\ge \af_0,$ 
\beq\label{920-3}
\|\la \xi, x_\af\ra -a\|<\ep/3\andeqn |f(\la \Psi_0(E_\lambda)(\xi), \xi-x_\af\ra)|<\ep/3.
\eneq
Now, by the first inequality of \eqref{920-3}, \eqref{920-2}  and  then the second inequality of \eqref{920-3},
\beq
\hspace{-0.3in}|f(\la \xi,\xi\ra-a)|&< &\ep/3+|f(\la \xi, \xi\ra -\la \xi, x_\af\ra)|\\
&=&\ep/3+|f(\la \xi, \xi-x_\af\ra)|\\
&\le & \ep/3+|f(\la \xi-\Psi_0(E_\lambda)(\xi), \xi-x_\af\ra)|+
|f(\la \Psi_0(E_\lambda)(\xi), \xi-x_\af\ra)|\\
&<& \ep/3+\ep/3+\ep/3=\ep.
\eneq
Since this holds for any $\ep,$ we conclude that
\beq
f(\la \xi, \xi\ra )=f(a)\rforal f\in A^*.
\eneq
By the Hahn-Banach theorem, we obtain that $\la \xi, \xi\ra=a.$
The claim is proved.

There exists $x_1\in \{x_\af\}$ such that
\beq
\|\la \xi-x_\af, x_1\ra\| <1/2\andeqn \|\la x_1,\xi\ra-a\|<1/2.
\eneq
Suppose that we have 
found $x_1, x_2, ..., x_n$ such that 
\beq
\|\la \xi-x_j, x_i\ra\|<1/2^j\andeqn \|\la x_j, \xi\ra-a\|<1/2^j,\,\,i=1,2,..., j-1,
\eneq
and $j=1,2,..., n.$   Then choose $x_{n+1}\in \{x_\af\}$ such that
\beq
\|\la \xi-x_{n+1}, x_i\ra\|<1/2^{n+1}\andeqn \|\la x_{n+1}, \xi\ra-a\|<1/2^{n+1},\,\,i=1,2,..., n.
\eneq
Thus, by the induction, we obtain a subsequence $\{x_n\}$  in $\{x_\af\}$ such that
\beq
 \lim_{n\to\infty}\|\la x_n,\xi\ra-a\|=0 \andeqn \lim_{n\to\infty}\|\la \xi-x_n, x_i\ra\|=0\rforal i\in \N.
\eneq
Denote by $H_0$ the Hilbert $A$-submodule generated by $\{x_1, x_2,..., x_n,...,\}.$
In particular, $x_n\in H_0,$ $n\in \N.$ 
Let $\eta=\xi|_{H_0}.$

Now $H_0$ is countably generated and $x_n\in H_0$ such that
\beq
\lim_{n\to \infty}\|\eta(x_n)-a\|=\lim_{n\to\infty}\|\xi(x_n)-a\|=0.
\eneq
Moreover, if $y=\sum_{i=1}^m x_i\cdot a_i,$ where $a_i\in A,$
then
\beq 
\lim_{n\to\infty}\|\eta(y)-\la x_n, y\ra\|=0.
\eneq
Since $\{x_n\}$ is bounded (since $\{x_\af\}$ is bounded),  this implies that
\beq
\lim_{n\to\infty}\|\eta(y)-\la x_n, y\ra\|=0\rforal y\in H_0.
\eneq
Applying what has been proved, we conclude that $\eta\in H_0$ and 
$\lim_{n\to\infty}\la \eta,x_n\ra=\la \eta, \eta\ra=a .$

Considering 
Hilbert $A^{**}$-modules $H_0\hspace{-0.0in}\bullet \hspace{-0.02in} A^{**}\subset H\hspace{-0.0in}\bullet \hspace{-0.02in} A^{**}.$ 
By Lemma \ref{ext},  we obtain  a projection  $P: H^\sim \to H_0^\sim$ 
such that $P|_{H_0\hspace{-0.01in}\bullet \hspace{-0.01in} A^{**}}={\rm id}_{H_0\hspace{-0.01in}\bullet \hspace{-0.01in} A^{**}}.$
Then $\eta=P(\xi).$
Hence, by the claim, 
\beq\nonumber
\hspace{-0.3in}\|(1-P)\xi\|^2&=&\|\la (1-P)(\xi), (1-P)(\xi)\|\le \|\la (1-P)(\xi), \xi\ra\|+\|\la (1-P)(\xi), P(\xi)\ra\|\\
&=&\|\la \xi, \xi\ra-\la P(\xi), \xi\ra\|+0=\|a-\la P(\xi), P(\xi)\ra\|=\|a-\la \eta, \eta\ra\|=0.
\eneq 
In other words, $P(\xi)=\eta=\xi.$  The theorem follows.
\end{proof}

\begin{df}\label{Def}
Let $A$ be a \CA\, and $H$ be a Hilbert $A$-module.  
Then $H^\sharp\subset H^\sim.$

For each $\xi\in H^\sharp,$ $\ep>0$ and   a finite subset $Y\subset H^\sharp,$ 
define
\beq
O_{\xi, \ep, Y}=\{\zeta\in H^\sharp: \| \la \xi-\zeta, y\ra\|<\ep: y\in H^{\sharp}\},
\eneq
where the inner product is taken from $H^\sharp,$ if $H^\sharp$ is a Hilbert 
$A$-module, or from $H^\sim$ (with value in $A^{**}$).

Denote by ${\cal T}_{NW}$ the topology in $H^\sharp$ generated by
$O_{\xi, \ep, Y}$
for all $\xi\in H^\sharp,$ $\ep\in \R_+\setminus \{0\},$ and finite subsets $Y\subset H^\sharp.$
Note that a net $\{\zeta_\af\}$ converges to $\xi$ in $H^\sharp$ in ${\cal T}_{NW},$
if and only if 
\beq
\lim_\af \|\la \xi-\zeta_\af, y\ra\|=0
\eneq
for all $y\in H^\sharp.$
where the inner product is the one defined above.
\end{df}

\begin{cor}\label{TTcc}
Let $A$ be  a \CA\, and 
$H$ be 
 a Hilbert  $A$-module.
 Then, with ${\cal T}_{NW},$ the unit ball of $H$ is closed in  $H^\sharp.$
%
\end{cor}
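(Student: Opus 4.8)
The plan is to deduce Corollary \ref{TTcc} directly from Theorem \ref{Tclosed}. By definition of the topology ${\cal T}_{NW}$, a net $\{x_\af\}$ in the unit ball of $H$ converges to some $\xi\in H^\sharp$ iff $\lim_\af\|\la \xi-x_\af, y\ra\|=0$ for every $y\in H^\sharp$, where the inner product is taken in $H^\sim$ (with values in $A^{**}$). To show the unit ball of $H$ is closed in $H^\sharp$ with this topology, I would take such a convergent net $\{x_\af\}$ with $\|x_\af\|\le 1$ and a limit $\xi\in H^\sharp$, and show $\xi$ lies in (the image of) $H$ with $\|\xi\|\le 1$.

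First I would extract, from the hypothesis that $x_\af\to\xi$ in ${\cal T}_{NW}$, the two inputs needed by Theorem \ref{Tclosed}. Taking $y=x$ for $x\in H\subset H^\sharp$ gives $\lim_\af\|\la \xi-x_\af, x\ra\|=0$, i.e. $\lim_\af\|\xi(x)-\la x_\af, x\ra\|=0$ for all $x\in H$ (using that for $x\in H$ one has $\la\xi,x\ra=\xi(x)$ and that the $H^\sim$-inner product restricts correctly on $H$). Taking $y=\xi\in H^\sharp$ gives $\lim_\af\|\la \xi-x_\af,\xi\ra\|=0$, so $\la x_\af,\xi\ra\to\la\xi,\xi\ra$; in particular $\xi(x_\af)=\la\xi,x_\af\ra=\la x_\af,\xi\ra^*$ converges in norm (to $\la\xi,\xi\ra^*$). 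The net $\{x_\af\}$ is bounded by $1$. Thus all hypotheses of Theorem \ref{Tclosed} are met, and we conclude $\xi\in H$ and $\la\xi,\xi\ra=\lim_\af\la\xi,x_\af\ra\in A$.

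It remains to check the norm bound $\|\xi\|\le 1$. Since $\la\xi,\xi\ra=\lim_\af\la\xi,x_\af\ra$ in norm, and by the Cauchy--Bunyakovsky--Schwarz inequality for Hilbert modules (Proposition 2.3 of \cite{Pa}) we have $\|\la\xi,x_\af\ra\|\le\|\xi\|\,\|x_\af\|\le\|\xi\|$, it follows that $\|\xi\|^2=\|\la\xi,\xi\ra\|=\lim_\af\|\la\xi,x_\af\ra\|\le\|\xi\|$, hence $\|\xi\|\le 1$. Therefore $\xi$ belongs to the unit ball of $H$, proving closedness.

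The only mild subtlety — and the step I expect to require the most care — is the bookkeeping about \emph{which} inner product is being used: $H^\sharp$ may not itself be a Hilbert module (unless $A$ is a $W^*$-algebra or monotone complete), so the pairing $\la\cdot,\cdot\ra$ in Definition \ref{Def} is the $A^{**}$-valued one inherited from $H^\sim$, and one must make sure that the hypotheses fed into Theorem \ref{Tclosed} are exactly the $A$-valued statements that theorem requires. This is handled by restricting test vectors $y$ to $H$ and to $\xi$ itself (both of which make $\la\xi-x_\af,y\ra$ lie in $A$ in the limit, since $\xi(x_\af)\in A$ and $\xi\in H^\sharp$), so no genuine difficulty arises; the proof is essentially a direct invocation of Theorem \ref{Tclosed} together with the elementary norm estimate above.
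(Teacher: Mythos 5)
Your proof is correct and follows essentially the same route as the paper: test the ${\cal T}_{NW}$-convergence against elements of $H$ and against $\xi$ itself to verify the hypotheses of Theorem \ref{Tclosed}, then invoke that theorem. Your additional verification that $\|\xi\|\le 1$ (via the Cauchy--Schwarz estimate $\|\la\xi,x_\af\ra\|\le\|\xi\|$) is a detail the paper's proof leaves implicit, and it is a worthwhile inclusion since the statement concerns the unit ball.
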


\begin{proof}
Let $\xi\in H^\sharp.$  Suppose that there is a net $\{x_\af\}$ in $H$ with 
$\|x_\af\|\le 1$ such that
\beq
\lim_\af \|\la \xi-x_\af, \eta\ra\|=0\rforal \eta\in  H^\sharp,
\eneq
where the inner product is in $H^\sim.$
Then, for each $x\in H,$ $\lim_\af\|\la \xi-x_\af, x\ra\|=0$
and $\{ \xi(x_\af)\}=\{\la \xi, x_\af\ra\}$ converges in norm to $\la \xi, \xi\ra.$ By Theorem \ref{Tclosed}, 
$\xi\in H.$ 
\end{proof}

\begin{cor}\label{TTCC-m}
Let $A$ be a monotone complete \CA\, and $H$ be a Hilbert $A$-module.
Then the unit ball of $H$ is closed in $H^\sharp$ in the topology ${\cal T}_{NW},$ where 
we view $H^\sharp$ as a self-dual Hilbert $A$-module. 
\end{cor}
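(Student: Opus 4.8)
The plan is to reduce to Theorem~\ref{Tclosed}, exactly as in the proof of Corollary~\ref{TTcc}; the only difference is that here ${\cal T}_{NW}$ is built from the $A$-valued inner product on the self-dual Hilbert $A$-module $H^\sharp$ provided by Lemma~3.7 of \cite{Lininj} (available because $A$ is monotone complete), rather than from the $A^{**}$-valued inner product inherited from $H^\sim$. So I would fix $\xi\in H^\sharp$ together with a net $\{x_\af\}$ in the unit ball of $H$ with $x_\af\to\xi$ in ${\cal T}_{NW}$; by Definition~\ref{Def} this says $\lim_\af\|\la\xi-x_\af,\eta\ra\|=0$ for every $\eta\in H^\sharp$, where $\la\cdot,\cdot\ra$ now denotes the inner product of the self-dual module $H^\sharp$.

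First I would feed into this convergence the two natural families of test vectors. Taking $\eta$ to be (the image in $H^\sharp$ of) an arbitrary $x\in H$, and using the identity $\la\tau,x\ra=\tau(x)$ for $\tau\in H^\sharp$ — together with the fact that the embedding $H\hookrightarrow H^\sharp$ is inner-product preserving, so that $\la x_\af,x\ra$ computed in $H^\sharp$ coincides with the $A$-valued inner product of $H$ — one gets $\la\xi-x_\af,x\ra=\xi(x)-\la x_\af,x\ra$, hence $\lim_\af\|\xi(x)-\la x_\af,x\ra\|=0$ for all $x\in H$. Taking instead $\eta=\xi$, one gets $\la x_\af,\xi\ra\to\la\xi,\xi\ra$ in norm, so that $\xi(x_\af)=\la\xi,x_\af\ra=\la x_\af,\xi\ra^*$ converges in norm (its limit being $\la\xi,\xi\ra$, which lies in $A$ by self-duality of $H^\sharp$).

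These are precisely the hypotheses of Theorem~\ref{Tclosed} (the net $\{x_\af\}$ is bounded, being contained in the unit ball), so that theorem yields $\xi\in H$. To conclude that $\xi$ lies in the unit ball, I would estimate $\|\xi\|^2=\|\la\xi,\xi\ra\|=\lim_\af\|\xi(x_\af)\|\le\sup_\af\|\xi\|\,\|x_\af\|\le\|\xi\|$, whence $\|\xi\|\le 1$. I do not expect a genuine obstacle here — it is a corollary — but the one point deserving attention is the bookkeeping of the several inner-product identifications involved (the $A$-valued inner product of $H^\sharp$, its compatibility with evaluation of functionals via $\la\tau,x\ra=\tau(x)$, and the isometric inner-product-preserving embedding $H\hookrightarrow H^\sharp$), so that the two test-vector computations genuinely produce the two hypotheses of Theorem~\ref{Tclosed}.
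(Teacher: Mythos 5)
Your proposal is correct and follows exactly the route the paper intends: the paper states Corollary \ref{TTCC-m} without proof, as an immediate adaptation of the proof of Corollary \ref{TTcc}, namely testing the ${\cal T}_{NW}$-convergence against $\eta=x\in H$ and $\eta=\xi$ to produce the two hypotheses of Theorem \ref{Tclosed}. Your bookkeeping of the inner-product identifications ($\la\tau,x\ra=\tau(x)$ and the isometric embedding $H\hookrightarrow H^\sharp$) is exactly what is needed, and your closing estimate $\|\xi\|^2\le\|\xi\|$ supplies the unit-ball bound that the paper leaves implicit.
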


\begin{lem}\label{LHdenseinHM}
 Let $X$ be a Hilbert space, $A\subset B(X)$ be a \SCA\, and 
 $M=\overline{A}^{{SOT}}$ and ${\rm id}_X\in M.$
 Let $H$ be a  
 Hilbert $A$-module. Suppose that $\xi\in H\hspace{-0.03in}\bullet \hspace{-0.03in} M$
 and $\la \xi, x\ra\in A$ for all $x\in H.$ 
 %
 Then $\xi\in H.$ 
\end{lem}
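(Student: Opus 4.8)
The plan is to argue in three stages, in parallel with the proofs of Theorems \ref{Tclosed} and \ref{LP1}: first the standard module $H=H_A$, then a countably generated $H$ via Kasparov's absorbing theorem, and finally a general $H$ by means of an approximate identity for $K(H)$.

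\emph{The case $H=H_A$.} By (1) of Proposition \ref{HAB}, $H_A\bullet M=\{\{b_n\}\in H_M: b_n\in R\}$, where $R=\overline{AM}$; moreover, under this identification the embedding $H_A\subset H_A\bullet M$ of \ref{Psmall} is just the inclusion $H_A\subset H_M$, and the inner product of $H_A\bullet M$ is the restriction of that of $H_M$. Write $\xi=\{b_n\}$ with $b_n\in R$. For a fixed $n$ and $a\in A$, applying the hypothesis to the element $x\in H_A$ having $a$ in its $n$-th coordinate and $0$ elsewhere gives $b_n^*a=\la \xi,x\ra\in A$, hence $Ab_n\subset A$. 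If $\{e_\af\}$ is an approximate identity for $A$, then $e_\af b_n\in Ab_n\subset A$ for every $\af$; and since $e_\af$ is a left approximate unit for $R$ in norm (because $e_\af(am)=(e_\af a)m\to am$ in norm for $a\in A$, $m\in M$, and such products span a dense subspace of $R$), we obtain $b_n=\lim_\af e_\af b_n\in A$. Finally, $\xi=\{b_n\}\in H_A\bullet M\subset H_M$ forces $\sum_{n}b_n^*b_n$ to converge in norm, so $\xi=\{b_n\}\in H_A$.

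\emph{The general case.} If $H$ is countably generated, Kasparov's absorbing theorem (Theorem 2 of \cite{Ka}) gives $H_A=H\oplus H^\perp$, hence $H_A\bullet M=(H\bullet M)\oplus(H^\perp\bullet M)$. For $\xi\in H\bullet M$ and $x=x_1+x_2\in H\oplus H^\perp=H_A$ we have $\la \xi,x\ra=\la \xi,x_1\ra$ (the second summand is orthogonal to $H\bullet M$) with $x_1\in H$, so $\la \xi,x\ra\in A$ for all $x\in H_A$; by the first stage $\xi\in H_A$, and applying the projection $Q:H_A\to H$, which fixes $\xi$ since $\xi\in H\bullet M=Q(H_A\bullet M)$, we get $\xi\in H$. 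For an arbitrary Hilbert $A$-module $H$, choose an approximate identity $\{E_\lambda\}$ for $K(H)$ consisting of positive elements and put $H_\lambda=\overline{E_\lambda(H)}$, which is countably generated by Proposition 3.2 of \cite{BL}. By Proposition \ref{PTid}, $\Psi_0(E_\lambda)(x\bullet b)=E_\lambda(x)\bullet b$, so $\Psi_0(E_\lambda)(\xi)\in H_\lambda\bullet M$; and since $\Psi_0(E_\lambda)$ is a positive element of $L(H\bullet M)$ with $\Psi_0(E_\lambda)|_H=E_\lambda$, for $y\in H_\lambda\subset H$ we have $\la \Psi_0(E_\lambda)(\xi),y\ra=\la \xi,\Psi_0(E_\lambda)(y)\ra=\la \xi,E_\lambda(y)\ra\in A$ because $E_\lambda(y)\in H$. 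By the countably generated case $\Psi_0(E_\lambda)(\xi)\in H_\lambda\subset H$; since $\Psi_0(E_\lambda)(\xi)\to\xi$ in norm by Lemma \ref{Lelambda} and $H$ is norm-closed in $H\bullet M$, we conclude $\xi\in H$.

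The main difficulty I anticipate is bookkeeping rather than a single delicate estimate: at each reduction one must check that the hypothesis ``$\la \xi,x\ra\in A$ for all $x\in H$'' is inherited — by the ambient module $H_A$, and by the truncations $\Psi_0(E_\lambda)(\xi)$ on $H_\lambda$ — and that the description of $H_A\bullet M$ inside $H_M$ in Proposition \ref{HAB}(1) is compatible with the embedding $H_A\hookrightarrow H_A\bullet M$. The one genuinely new observation, which makes the $H_A$ case short, is that every element of $H_A\bullet M$ already has norm-convergent $\sum_n b_n^*b_n$, so the hypothesis only has to supply membership of each coordinate $b_n$ in $A$.
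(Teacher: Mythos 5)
Your proof is correct, and its first two stages --- the coordinate argument for $H_A$ and the reduction of a countably generated $H$ to $H_A$ via Kasparov's absorbing theorem --- coincide with the paper's proof. (The paper compresses the verification that each $b_n\in A$ into the single remark that $\xi\in H_A^\sharp$; your argument that $Ab_n\subset A$ and $e_\af b_n\to b_n$ for an approximate identity $\{e_\af\}$ of $A$ acting as a left approximate unit on $R=\overline{AM}$ is exactly the detail behind that remark, and your observation that $\sum_n b_n^*b_n$ already converges in norm because $\xi\in H_A\hspace{-0.03in}\bullet\hspace{-0.03in} M\subset H_M$ is the point the paper uses implicitly.) The two proofs diverge only in the passage from countably generated modules to general ones. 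The paper chooses norm-approximants $\sum_i x_{n,i}\hspace{-0.03in}\bullet\hspace{-0.03in} b_{n,i}$ of $\xi$, lets $H_0$ be the countably generated submodule generated by all the $x_{n,i}$, observes that $\xi\in H_0\hspace{-0.03in}\bullet\hspace{-0.03in} M$ and that the hypothesis restricts to $H_0$, and applies the countably generated case once. You instead truncate $\xi$ by $\Psi_0(E_\lambda)$ for an approximate identity $\{E_\lambda\}$ of $K(H)$, apply the countably generated case to each $H_\lambda=\overline{E_\lambda(H)}$, and conclude from the norm convergence $\Psi_0(E_\lambda)(\xi)\to\xi$ of Lemma \ref{Lelambda} together with the closedness of $H$ in $H\hspace{-0.03in}\bullet\hspace{-0.03in} M$. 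Both reductions are valid: the paper's is lighter, needing only that $H_0\hspace{-0.03in}\bullet\hspace{-0.03in} M$ is a closed submodule containing the approximants, whereas yours mirrors the approximate-identity technique of Theorem \ref{LP1} and additionally uses the self-adjointness of $\Psi_0(E_\lambda)$ to check that the hypothesis $\la\,\cdot\,,y\ra\in A$ is inherited by the truncations --- a verification you carry out correctly.
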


\begin{proof}
First let us consider the case that $H=H_A.$
Then, by Proposition \ref{HAB}, 
\beq
H_A\hspace{-0.03in}\bullet \hspace{-0.03in} M=\{\{a_n\}: a_n\in \overline{AM}\andeqn
\sum_{k=1}^n a_k^*a_k\,\,{\rm converges\,\, in\,\, norm}\}.
\eneq
Write $\xi=\{b_n\}\in H_A\hspace{-0.03in}\bullet \hspace{-0.03in} M.$
The condition that $\la \xi, x\ra \in A$ for all $x\in H_A$ implies that
$\xi\in H_A^\sharp.$
It follows that $b_n\in A.$ Hence $\xi\in H_A.$

Next, let us assume that $H$ is countably generated. 
Let $\xi\in H\hspace{-0.03in}\bullet \hspace{-0.03in} M$ and $\la \xi, x\ra\in A$ 
for all $x\in H.$
By Kasparov's absorbing theorem, we may write $H_A=H\oplus H^\perp.$
It follows from what has been proved that $\xi\in H_A.$ 
Let $P: H_A\to H$ be the projection. Then $P(\xi)\in H.$
However, $\la \xi-P(\xi), x\ra=0$ for all $x\in H.$ 
For any $y\in H^\perp,$ Since $\xi\in H\hspace{-0.03in}\bullet \hspace{-0.03in} M,$
$\la \xi, y\ra=0$ for all $y\in H.$ Hence $\xi=P(\xi)\in H.$ 

In general, since $\xi\in H\hspace{-0.03in}\bullet \hspace{-0.03in} M,$
there are $x_{n,i}\in H,$ $i=1,2,...,k(n),$ $b_{n,i}\in M,$ $i=1,2,...,k(n),$  $n\in \N$
such that
\beq
\lim_{n\to\infty}\|\xi-\sum_{i=1}^{k(n)} x_{n,i}\hspace{-0.01in} \bullet \hspace{-0.01in}b_{n,i}\|=0.
\eneq

Let $H_0$ be the Hilbert $A$-submodule generated by $\{x_{n,i}: 1\le i\le k(n),\, n\in \N\}.$
Then $\xi\in H_0\hspace{-0.02in} \bullet \hspace{-0.02in} M$ and $\xi|_{H_0}\in H_0^\sharp$
as $\la \xi, h\ra\in A$ for all $h\in H_0\subset H.$
From what has just  been proved, $\xi\in H_0\subset H.$ 
\end{proof}

We end this section with the following result.

\begin{thm}\label{S5MT}
Let $A$ be a \CA\, and $H$ be a Hilbert $A$-module.
Then the unit ball of $H$ is closed in $H^\sim$ in the  topology ${\cal T}_{NW}$ of $H^\sim=(H\hspace{-0.03in}\bullet \hspace{-0.03in} A^{**})^\sharp.$ 
\end{thm}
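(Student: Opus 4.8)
The plan is to reduce the general statement to Theorem \ref{Tclosed}, which already handles closedness of the unit ball of $H$ in $H^\sharp$ with respect to ${\cal T}_{NW}$, by passing through the intermediate module $H\hspace{-0.03in}\bullet\hspace{-0.03in} A^{**}$. Suppose $\xi\in H^\sim=(H\hspace{-0.03in}\bullet\hspace{-0.03in} A^{**})^\sharp$ and there is a net $\{x_\af\}$ in the unit ball of $H$ with $\lim_\af\|\la \xi-x_\af,\zeta\ra\|=0$ for all $\zeta\in H^\sim$, where the inner product is the $A^{**}$-valued one on $H^\sim$. First I would record the easy consequences: taking $\zeta=x\in H$ gives $\lim_\af\|\xi(x)-\la x_\af,x\ra\|=0$ for all $x\in H$, so in particular $\xi|_H$ is a bounded $A$-module map $H\to A^{**}$, i.e.\ an element of $H^\sim$ in the $B(H,A^{**})$-picture — but crucially $\xi(x)=\lim_\af\la x_\af,x\ra$ is a norm-limit of elements of $A$. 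This does \emph{not} immediately put $\xi$ in $H^\sharp$; that is exactly the point where more care is needed.

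The key step is to show $\la \xi,\eta\ra\in A$ for all $\eta\in H\hspace{-0.03in}\bullet\hspace{-0.03in} A^{**}$, so that Lemma \ref{LHdenseinHM} applies with $X=H_U$, $M=A^{**}$. For $\eta=x\hspace{-0.03in}\bullet\hspace{-0.03in} b$ with $x\in H$, $b\in A^{**}$, one has $\la \xi,\eta\ra=\la\xi,x\ra b=\xi(x)b$; since $\xi(x)\in A$ but $b\in A^{**}$, this lands in $A^{**}$ in general, not $A$, so a direct computation is not enough. Instead I would use the net: for $\eta\in H\hspace{-0.03in}\bullet\hspace{-0.03in} A^{**}$ and any positive $f\in A^*$, the Cauchy–Schwarz estimate $|f(\la\xi-x_\af,\eta\ra)|^2\le \|f\|\,\|\eta\|^2\, f(\la\xi-x_\af,\xi-x_\af\ra)$ together with the hypothesis (applied with $\zeta=\xi-x_\af$, using boundedness of the net and Theorem \ref{Tclosed}-style reasoning, or directly with $\zeta=\xi$ and the triangle inequality) forces $f(\la x_\af,\eta\ra)\to f(\la\xi,\eta\ra)$; but $\la x_\af,\eta\ra\in A$ for each $\af$ is \emph{not} true either, since $\eta\in H\hspace{-0.03in}\bullet\hspace{-0.03in} A^{**}$. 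So the honest route is: apply Theorem \ref{Tclosed} to the countably-generated submodule machinery exactly as in its proof. Concretely, restrict $\xi$ to the Hilbert $A$-submodule $H_0\subset H$ generated by a suitable countable subnet $\{x_n\}$ of $\{x_\af\}$ chosen (by the diagonal argument in the proof of Theorem \ref{Tclosed}) so that $\|\la\xi-x_n,x_i\ra\|\to 0$ for each $i$ and $\la x_n,\xi\ra$ converges in norm to $\la\xi,\xi\ra$ (the latter limit being in $A^{**}$ a priori). Then $\xi|_{H_0}\in H_0^\sharp$-or-$H_0^\sim$, $\{x_n\}\subset H_0$, and one is in position to invoke Theorem \ref{Tclosed} for the pair $(H_0, A)$ — provided one first knows $\xi|_{H_0}\in H_0^\sharp$, i.e.\ $\la\xi,h\ra\in A$ for $h\in H_0$. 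That last point follows because for $h\in H$ we already have $\la\xi,h\ra=\xi(h)\in A$, and $H_0\subset H$.

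So the clean structure is: (i) from the ${\cal T}_{NW}$-convergence in $H^\sim$, deduce $\lim_\af\|\xi(x)-\la x_\af,x\ra\|=0$ for all $x\in H$ and that $\la x_\af,\xi\ra$ converges in norm; (ii) extract a countable subnet and form $H_0$ as above, with $\xi|_{H_0}\in H_0^\sharp$; (iii) apply Theorem \ref{Tclosed} to $(H_0,A)$ to conclude $\xi|_{H_0}\in H_0$ and $\la\xi,\xi\ra\in A$; (iv) use Lemma \ref{ext} to get the projection $P\colon H^\sim\to H_0^\sim$ fixing $H_0\hspace{-0.03in}\bullet\hspace{-0.03in} A^{**}$, so $P(\xi)=\xi|_{H_0}\in H_0\subset H$; (v) finally run the orthogonality computation from the end of the proof of Theorem \ref{Tclosed} — $\|(1-P)\xi\|^2\le\|\la(1-P)\xi,\xi\ra\|+\|\la(1-P)\xi,P\xi\ra\|=\|\la\xi,\xi\ra-\la P\xi,\xi\ra\|=\|\la\xi,\xi\ra-\la P\xi,P\xi\ra\|=0$, using that $\la\xi,\xi\ra=\la P\xi,P\xi\ra$ in $A$ — to get $\xi=P(\xi)\in H$, with $\|\xi\|\le 1$ since the unit ball is ${\cal T}_{NW}$-closed in $A$-norm via lower semicontinuity of the norm. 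The main obstacle I anticipate is step (iii)'s hypothesis-checking: one must be careful that the norm-convergence $\la x_\af,x\ra\to\xi(x)$ in $A$ and the norm-convergence of $\la x_\af,\xi\ra$ are genuinely in $A$ (not merely $A^{**}$) before Theorem \ref{Tclosed} can be applied, and this is precisely guaranteed because $x_\af, x\in H$ so $\la x_\af,x\ra\in A$ and $\xi(x_\af)=\la\xi,x_\af\ra\in A$ — the whole point being that $\xi\in H^\sim$ takes values in $A^{**}$ but its pairings against genuine elements of $H$ stay in $A$. Everything else is bookkeeping already carried out in the proofs of Theorems \ref{Tclosed} and \ref{LP1}.
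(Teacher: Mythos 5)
Your argument is correct, but it takes a genuinely different (and longer) route than the paper's. The paper's proof is two applications of results already in hand: since $A^{**}$ is monotone complete and $H^\sim=(H\hspace{-0.03in}\bullet\hspace{-0.03in} A^{**})^\sharp,$ Corollary \ref{TTCC-m} applied to the pair $(A^{**}, H\hspace{-0.03in}\bullet\hspace{-0.03in} A^{**})$ puts $\xi$ in the unit ball of $H\hspace{-0.03in}\bullet\hspace{-0.03in} A^{**};$ then $\la \xi, y\ra=\lim_\af\la x_\af,y\ra\in A$ for $y\in H,$ and Lemma \ref{LHdenseinHM} gives $\xi\in H.$ You flirt with Lemma \ref{LHdenseinHM} but abandon it for the wrong reason: its hypothesis is $\la\xi,x\ra\in A$ only for $x\in H$ (which you have), not for all $\eta\in H\hspace{-0.03in}\bullet\hspace{-0.03in} A^{**}$ (which is indeed false in general); the hypothesis you are actually missing is $\xi\in H\hspace{-0.03in}\bullet\hspace{-0.03in} A^{**},$ and that is precisely what Corollary \ref{TTCC-m} supplies. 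Your workaround --- rerunning the countable-reduction-plus-orthogonality scheme of Theorem \ref{Tclosed} directly inside $H^\sim$ --- does go through: the ${\cal T}_{NW}$ hypothesis with $\zeta=\xi$ makes $\la\xi,\xi\ra=\lim_\af\la x_\af,\xi\ra\in A$ immediate (so the delicate ``Claim'' in the proof of Theorem \ref{Tclosed} is not needed here), the restriction $\xi|_{H_0}$ lies in $H_0^\sharp$ because $\la\xi,h\ra\in A$ for all $h\in H,$ Theorem \ref{Tclosed} then gives $\eta=\xi|_{H_0}\in H_0$ with $\la\eta,\eta\ra=\la\xi,\xi\ra,$ and the projection from Lemma \ref{ext} together with your orthogonality computation forces $\xi=P(\xi)=\eta\in H.$ What the paper's route buys is brevity and a clean separation of the two issues (closedness at the level of the monotone complete algebra $A^{**},$ then descent from $H\hspace{-0.03in}\bullet\hspace{-0.03in} A^{**}$ to $H$); what yours buys is independence from Lemma \ref{LHdenseinHM} and Corollary \ref{TTCC-m}, at the cost of repeating the machinery of Theorem \ref{Tclosed}.
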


\begin{proof}
Let  $\{x_\af\}$ be a net in the unit ball of $H$ and $\xi\in H^\sim$
such that
\beq
\lim_\af \|\la \xi-x_\af, \zeta\ra\|=0\rforal \zeta\in H^\sim.
\eneq
Since $H^\sim= (H\hspace{-0.03in}\bullet \hspace{-0.03in} A^{**})^\sharp$ 
and $H\subset H\hspace{-0.03in}\bullet \hspace{-0.02in} A^{**},$ by applying Corollary \ref{TTCC-m}, 
we conclude that $\xi\in H\hspace{-0.03in}\bullet \hspace{-0.03in} A^{**}.$

We also  have, for all $y\in H,$
\beq
\lim_\af \|\la \xi-x_\af, y\ra\|=0.
\eneq
Since $\la x_\af, y\ra\in A,$ it follows that $\la \xi, y\ra\in A.$
By Lemma \ref{LHdenseinHM}, $\xi\in H.$
\end{proof}

\section{A Kaplansky style density theorem in the self-dual Hilbert modules}

In the last section, we show that   $H$ is closed in  $H^\sharp$ and in $H^\sim$ 
in the topology ${\cal T}_{NW}$ of $H^\sharp,$  and  that of $H^\sim,$  respectively.
In this section, however,  we will show that $H$ is dense in $H^\sim$ in a weaker topology.
In fact, 
by Theorem \ref{P1}, it is easy to show that $H$ is dense in $H^\sharp$ in ${\cal T}_0,$ the topology 
defined below (see \ref{DT0}).  A similar question is whether one can replace $x$ in \eqref{dt0}
by any element in $H^\sharp.$ 

\begin{df}\label{DT0}
Let $A$ be a $W^*$-algebra and $H$ be a Hilbert $A$-module.

 Let $\ep>0$ and $Y\subset H$ and ${\cal F}\subset A_*$ be finite subsets.
 Let $\xi\in H^\sharp.$ 
 Denote 
 \beq\label{dt0}
 O_{\xi, \ep, Y, {\cal F}}=\{\zeta\in H^\sharp: |f(\la \xi-\zeta, x\ra)|<\ep,\,\, x\in Y,\,\, f\in {\cal F}\}\subset H^\sharp.
 \eneq
 Let ${\cal T}_0$  be the topology of $H^\sharp$ generated by the subsets $O_{\xi, \ep, Y, {\cal F}}.$

  Let $\ep>0$ and $Y\subset H^\sharp $ and ${\cal F}\subset A_*$ be finite subsets.
 Let $\xi\in H^\sharp.$ 
 Denote 
 \beq
 O_{\xi, \ep, Y, {\cal F}}=\{\zeta\in H^\sharp: |f(\la \xi-\zeta, x\ra)|<\ep,\,\, x\in Y\}\subset H^\sharp.
 \eneq
 Let ${\cal T}_w$  be the topology of $H^\sharp$ generated by the subsets $O_{\xi, \ep, Y, {\cal F}}.$

 In fact,  by Proposition 3.8  of \cite{Pa} and the definition before it, ${\cal T}_w$ is the weak*-topology of $H^\sharp$ as 
 a conjugate space. So a natural question is whether $H$ is dense in
 $H^\sharp$ in ${\cal T}_w.$ To be more useful (perhaps may not be used twice on Sundays-- -cf. 2.3.4. of \cite{Pedbook}),
we will also prove a Kaplansky style  density theorem as Theorem \ref{S4-mt}. 

Let us also consider another topology.
Let $\ep>0,$ $\xi\in H^\sharp$   and ${\cal F}\subset A_*$ be a finite subset.
Denote by
\beq
O_{\ep, \xi, {\cal F}}=\{\zeta\in H^\sharp: |f(\la \xi-\zeta, \xi-\zeta\ra )|<\ep,\,\, f\in {\cal F}\}.
\eneq
Let ${\cal T}_{ws}$ be the topology generated by $O_{\ep, \xi, {\cal F}}$
for all $\ep>0,$ $\xi\in H^\sharp$ and finite subsets ${\cal F}\subset A_*.$
Note that ${\cal T}_{ws}$ is stronger than ${\cal T}_w$ which is stronger than 
${\cal T}_0.$
 \end{df}

 \begin{lem}\label{S3-L1}
 Let $X$ be a Hilbert space and $A\subset B(X)$ be 
 a \SCA.  Suppose that $M=\overline{A}^{SOT}$ with ${\rm id}_X\in M$ 
and $b=\{b_k\}\in H_{M}^\sharp.$
 There is a net $a_\af=\{(a_{1,\af},a_{2, \af},...,a_{n, \af},...\}\in H_A$  such that
  \beq\label{S3-L1-01}
&& \|\sum_{j=1}^\infty a_{j, \af}^*a_{j, \af}\|^{1/2}\le \|b\|\tand\\\label{S3-L1-02}
 &&\lim_\af f(\sum_{j=1}^\infty (b_j-a_{j, \af})^*(b_j-a_{j, \af}))=0
 \eneq
 for all $f\in M_*.$
 \end{lem}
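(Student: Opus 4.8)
The plan is to truncate $b=\{b_k\}\in H_M^\sharp$ to finitely many coordinates and then apply the Kaplansky density theorem coordinatewise, in the spirit of the proof of Lemma \ref{4LL1} but testing against normal functionals rather than against vectors. Recall from \ref{DAsharp} (applied to the $W^*$-algebra $M$) that $H_M^\sharp=\{\{c_n\}:\sup_n\|\sum_{k=1}^n c_k^*c_k\|<\infty\}$, that $\|b\|^2=\sup_n\|\sum_{k=1}^n b_k^*b_k\|$, and that $\la \{c_n\},\{d_n\}\ra=\sum_{n=1}^\infty c_n^*d_n$ with the series weak*-convergent in $M$; recall also, as in the proof of Lemma \ref{4LL+1}, that for a positive normal functional $f_0$ on $M$ the form $(c,d)\mapsto f_0(\la c,d\ra)$ is positive semidefinite on $H_M^\sharp$, so $c\mapsto f_0(\la c,c\ra)^{1/2}$ is a seminorm (Cauchy--Schwarz and the triangle inequality hold).

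For $n\in\N$ let $P_n\colon H_M^\sharp\to M^{(n)}$ be the projection; then $P_n(b)\in H_M\subset H_M^\sharp$, $\|P_n(b)\|\le\|b\|$, and $\la b-P_n(b),b-P_n(b)\ra=\sum_{k>n}b_k^*b_k$. For each positive normal functional $f_0$ the partial sums $\sum_{k\le n}f_0(b_k^*b_k)$ increase and are bounded by $\|f_0\|\,\|b\|^2$, so $f_0(\la b-P_n(b),b-P_n(b)\ra)=\sum_{k>n}f_0(b_k^*b_k)\to 0$. Next fix $n$. Since ${\rm id}_X\in M$, $A$ acts nondegenerately on $X$, hence $M_n(A)$ acts nondegenerately on $X^{(n)}$ and its strong closure is $M_n(A)''=M_n(M)$. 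Let $S\in M_n(M)$ have first column $(b_1,\dots,b_n)^T$ and all other columns zero, so $\|S\|=\|\sum_{j\le n}b_j^*b_j\|^{1/2}=\|P_n(b)\|\le\|b\|$. By the Kaplansky density theorem there is a net $\{d_\bt\}$ in $M_n(A)$ with $\|d_\bt\|\le\|S\|$ and $d_\bt\to S$ in the strong operator topology; putting $q={\rm diag}(1,0,\dots,0)$ and replacing $d_\bt$ by $d_\bt q$ (note $Sq=S$), we may assume $d_\bt$ has first column $(a_{1,\bt},\dots,a_{n,\bt})^T\in A^{(n)}$ and other columns zero, still with $d_\bt\to S$ strongly and $\|d_\bt\|\le\|b\|$. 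Then $x_{n,\bt}:=(a_{1,\bt},\dots,a_{n,\bt},0,0,\dots)\in H_A$ satisfies $\|x_{n,\bt}\|\le\|b\|$, and $(S-d_\bt)^*(S-d_\bt)\to 0$ in the weak operator topology (as $S-d_\bt\to 0$ strongly and $\{\|S-d_\bt\|\}$ is bounded); its $(1,1)$-entry is $\sum_{j\le n}(b_j-a_{j,\bt})^*(b_j-a_{j,\bt})=\la P_n(b)-x_{n,\bt},P_n(b)-x_{n,\bt}\ra$, which, being bounded, converges $\sigma$-weakly to $0$ in $M$; in particular $f(\la P_n(b)-x_{n,\bt},P_n(b)-x_{n,\bt}\ra)\to 0$ for every $f\in M_*$.

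To assemble the net, take $\Lambda=\{(\mathcal F,\ep):\mathcal F\subset M_*\text{ finite},\ \ep>0\}$ ordered by $(\mathcal F_1,\ep_1)\le(\mathcal F_2,\ep_2)$ iff $\mathcal F_1\subseteq\mathcal F_2$ and $\ep_1\ge\ep_2$. Given $\lambda=(\mathcal F,\ep)$, write each $g\in\mathcal F$ as $g=g_1-g_2+i(g_3-g_4)$ with $g_i\ge 0$ normal and set $f_0=\sum_{g\in\mathcal F}\sum_{i=1}^4 g_i$, so $|g(c)|\le 4f_0(c)$ for every $c\in M_+$ and $g\in\mathcal F$. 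Choose $n$ with $f_0(\la b-P_n(b),b-P_n(b)\ra)<\ep/16$ and then $\bt$ with $f_0(\la P_n(b)-x_{n,\bt},P_n(b)-x_{n,\bt}\ra)<\ep/16$, and put $x_\lambda:=x_{n,\bt}\in H_A$, so $\|x_\lambda\|\le\|b\|$. By the triangle inequality for the seminorm $f_0(\la\cdot,\cdot\ra)^{1/2}$ we get $f_0(\la b-x_\lambda,b-x_\lambda\ra)<\ep/4$, hence $|g(\la b-x_\lambda,b-x_\lambda\ra)|<\ep$ for all $g\in\mathcal F$. Since $\la b-x_\lambda,b-x_\lambda\ra=\sum_{j=1}^\infty(b_j-a_{j,\lambda})^*(b_j-a_{j,\lambda})$ (with $a_{j,\lambda}=0$ beyond the support of $x_\lambda$), the net $\{x_\lambda\}_{\lambda\in\Lambda}$ satisfies \eqref{S3-L1-01}; and for fixed $f\in M_*$ and $\ep>0$, every $\lambda\ge(\{f\},\ep)$ yields $|f(\la b-x_\lambda,b-x_\lambda\ra)|<\ep$, which is \eqref{S3-L1-02}.

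The one genuinely delicate point is the truncation: unlike in Lemma \ref{4LL1}, where $\xi\in H_A\hspace{-0.03in}\bullet\hspace{-0.03in} M$ has $\sum_k b_k^*b_k$ norm-convergent, here this series converges only in the weak*-topology, so $f_0(\sum_{k>n}b_k^*b_k)\to 0$ holds for each fixed positive normal functional but not uniformly in $f_0$. This is exactly why the directed set must be indexed by finite subsets of $M_*$ and why at each stage one reduces to a single dominating positive normal functional; preserving the bound $\|x_\lambda\|\le\|b\|$ then only requires combining norm-nonincreasing truncation with the unit-ball form of the Kaplansky density theorem.
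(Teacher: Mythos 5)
Your proof is correct. It rests on the same central device as the paper's: encode the coordinate sequences as column operators, apply the unit-ball form of the Kaplansky density theorem to matrix algebras over $A,$ and then use the fact that a bounded net converging to $0$ in the strong (or weak) operator topology is $\sigma$-weakly null. Where you differ is in the treatment of the infinite tail of $b.$ The paper passes to $B(l^2(X)),$ first shows that the infinite column operator $\bar b$ lies in the strong operator closure of the algebra of finitely supported matrices over $A,$ and then applies Kaplansky once; the resulting single net satisfies $\la b-a_\af, b-a_\af\ra\to 0$ in the strong operator topology on $X,$ which is strictly stronger than what the lemma asserts. You instead truncate $b$ to $P_n(b)$ first --- a step that, as you correctly point out, converges only against each fixed positive normal functional and not in norm --- and then apply Kaplansky at each finite level $M_n(A)\subset M_n(M).$ This forces the re-indexing of the net by pairs $({\cal F},\ep)$ with ${\cal F}\subset M_*$ finite, together with the domination of ${\cal F}$ by a single positive normal functional $f_0$ and the triangle inequality for the seminorm $f_0(\la \cdot,\cdot\ra)^{1/2};$ in exchange, the whole argument stays inside finite matrix algebras acting on $X^{(n)}$ and delivers exactly the weak*-statement required. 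Both routes obtain the norm bound $\|a_\af\|\le \|b\|$ in the same way, by cutting the Kaplansky approximants down to their first column with the projection $q={\rm diag}(1,0,\dots,0)$ and noting that this does not increase the norm.
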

 
 \begin{proof}
 Let $Y=l^2(X),$ the Hilbert space 
 direct sum of countably many copies of $X.$ 
Let ${\bar b}=(c_{i,j})\in B(Y),$ 
where $c_{i,1}=b_i,$ $i\in \N,$ and $c_{i,j}=0,$ if $j\ge 2$ (see \eqref{bounded}). 
Denote by $P_n: Y\to  X^{(n)}$ the projection, where 
$X^{(n)}$ is the direct sum of (first) $n$ copies of $X.$
Let $\ep>0$ and $V\in L^2(X)$ be a finite subset.
Then there is $n_0\in \N$ such that
\beq
\|(1-P_{n_0})(v)\|<\ep/2(1+\|b\|)\rforal  v\in V.
\eneq
There is $d\in M_{n_0}(A)$ such that
\beq
\|({\bar b}-d)(P_{n_0}(v))\|<\ep/4\rforal v\in V.
\eneq
We have
\beq
\|({\bar b}-dP_{n_0})(v)\|&\le & \|({\bar b}-dP_{n_0})(1-P_{n_0})(v)\|+\|({\bar b}-d)P_{n_0}(v)\|\\
&=& \|{\bar b}(1-P_{n_0})(v)\|+\ep/4
<\ep\rforal v\in V.
\eneq 
Let $B$ be the self-adjoint algebra of those bounded operators on $Y$ which can be expressed as 
infinite matrices with entries in $A$ and all are zero except finitely many of them. Then, by what has been proved, we conclude that, in the strong operator topology (of $B(Y)$), 
operator ${\bar b}$ is in the closure of operators in 
$B$ in the strong operator topology.

Then, by the Kaplansky density theorem, there is a net $\{d_\af\}\in B$
with $\|d_\af\|\le \|{\bar b}\|$ such that
\beq
\lim_\af \|({\bar b}-d_\af)v\|=0\rforal v\in Y.
\eneq
Since $\{\|{\bar b}-d_\af\|\}$ is bounded, we also have
\beq
\lim_\af \|({\bar b}-d_\af)^*({\bar b} -d_\af)v\|=0\rforal v\in Y.
\eneq
We further note that
\beq
\|{\bar b}\|^2=\|({\bar b})^*{\bar b}\|=\|\sum_{j=1}^\infty b_j^*b_j\|\le \|b\|.
\eneq

Then 
\beq\label{S3-L-101}
\lim_\af \|({\bar b}-d_\af)^*({\bar b}-d_\af)P_1v\|=0\rforal v\in Y.
\eneq
Note ${\bar b}P_1={\bar b}.$ 
Let $d_\af'=d_\af P_1=(d_{i,j,\af}),$ 
where $d_{i, j, \af}=0$ if $j\ge 2.$   Put $a_{j, \af}=d_{1, j, \af},$ $j\in \N.$
Then, for all $n\in \N,$ 
\beq
\|\sum_{j=1}^n a_{j, \af}^*a_{j, \af}\|\le \|(d_\af')^*d_\af'\|=\|d_\af'\|^2\le \|{\bar b}\|^2\le \|b\|^2.
\eneq
Put $a_\af=\{a_{j, \af}\}.$ Since $d_\af\in B,$  for each $\af,$ there are only finitely many
of $a_{j, \af}$ which are not zero. Hence $a_\af\in  H_A.$  Then $\|a_\af\|\le \|b\|.$
Thus \eqref{S3-L1-01} holds.
On the other hand,  by \eqref{S3-L-101},
\beq\label{S3-L-102}
\lim_\af \|({\bar b}-d_\af')^*({\bar b}-d_\af')P_1v\|=0.
\eneq
Let $h\in X.$  By \eqref{S3-L-102},
\beq
\lim_\af \,\,\,\|\sum_{j=1}^\infty (b_j-a_{j,\af})^*(b_j-a_{j,\af})h\|=0.
\eneq
In other words, $\sum_{i=1}^\infty (b_j-a_{j,\af})^*(b_j-a_{j,\af})=\la b-a_\af, b-a_\af\ra \to 0$ in the strong operator topology.
However, 
\beq
\|\sum_{j=1}^n (b_j-a_{j,\af})^*(b_j-a_{j,\af})\|=\|({\bar b}-d_\af')\|^2\le (\|\bar b\|+\|d_\af\|)^2\le 4\|b\|^2.
\eneq
Therefore $\sum_{i=1}^n (b_j-a_{j,\af})^*(b_j-a_{j,\af})\to 0$ in the $\sigma$-weak operator topology
and hence in weak*-topology (see, for example, 4.6.13 of \cite{Pedb2}).  Therefore \eqref{S3-L1-02}  holds.
 \end{proof}

 \begin{lem}\label{S3-L2}
 Let $A\subset B(X)$ be a \SCA\,  
 and let $M={\bar A}^{SOT}$ with $1_X\in M.$
  Suppose that  $H$ is  a countably generated Hilbert $A$-module.
 Then $H$ is dense in $(H\hspace{-0.03in}\bullet\hspace{-0.03in} M)^\sharp$ in the following sense:
 For any  $\xi\in (H\hspace{-0.03in}\bullet\hspace{-0.03in} M)^\sharp,$ there is a net $x_\af\in H$ with 
 $\|x_\af\|\le \|\xi\|$ such that
 \beq\label{3SL-3-10001}
 \lim_\af \sup\{|f(\la \xi-x_\af, \zeta\ra)|: \zeta\in (H\hspace{-0.03in}\bullet\hspace{-0.03in} M)^\sharp, \|\zeta\|\le 1\}=0\rforal  f\in M_*.
 \eneq
 \end{lem}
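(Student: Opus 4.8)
The plan is to reduce to the standard module $H_A$ via Kasparov's absorbing theorem and then invoke Lemma \ref{S3-L1}. First I would use that $H$ is countably generated: by Kasparov's absorbing theorem (Theorem 2 of \cite{Ka}) we may write $H_A=H\oplus H^\perp$. Applying $\hspace{-0.03in}\bullet\hspace{-0.03in} M$ and Proposition \ref{Psmall}, this gives $H_A\hspace{-0.03in}\bullet\hspace{-0.03in} M=(H\hspace{-0.03in}\bullet\hspace{-0.03in} M)\oplus(H^\perp\hspace{-0.03in}\bullet\hspace{-0.03in} M)$, and then by Lemma \ref{ext} (since $M=\overline{A}^{SOT}$ is a von Neumann algebra, hence monotone complete) we get an orthogonal decomposition $(H_A\hspace{-0.03in}\bullet\hspace{-0.03in} M)^\sharp=(H\hspace{-0.03in}\bullet\hspace{-0.03in} M)^\sharp\oplus(H^\perp\hspace{-0.03in}\bullet\hspace{-0.03in} M)^\sharp$ with the corresponding projection $Q$ lying in $L((H_A\hspace{-0.03in}\bullet\hspace{-0.03in} M)^\sharp)$. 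Since by Proposition \ref{HAB}(2) we have $(H_A\hspace{-0.03in}\bullet\hspace{-0.03in} M)^\sharp=H_M^\sharp$, the problem is thereby reduced to the case $H=H_A$ together with a projection argument to come back.

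Next I would prove the statement for $H=H_A$. Given $\xi\in H_M^\sharp$, write $\xi=\{b_k\}$ in the coordinates of Paragraph \ref{DAsharp}. Lemma \ref{S3-L1} produces a net $x_\af=\{a_{j,\af}\}\in H_A$ with $\|x_\af\|\le\|\xi\|$ such that $\la\xi-x_\af,\xi-x_\af\ra=\sum_j(b_j-a_{j,\af})^*(b_j-a_{j,\af})\to 0$ in the weak*-topology of $M$, i.e. $f(\la\xi-x_\af,\xi-x_\af\ra)\to 0$ for every $f\in M_*$. Now for a positive normal functional $f\in M_*$, the form $f(\la\cdot,\cdot\ra)$ is a pseudo-inner-product on $H_M^\sharp$ (exactly as in the "Moreover" part of Lemma \ref{4LL+1}), so by Cauchy--Bunyakovsky--Schwarz, for any $\zeta\in H_M^\sharp$ with $\|\zeta\|\le 1$,
\begin{equation*}
|f(\la\xi-x_\af,\zeta\ra)|^2\le f(\la\xi-x_\af,\xi-x_\af\ra)\,f(\la\zeta,\zeta\ra)\le \|f\|\,f(\la\xi-x_\af,\xi-x_\af\ra).
\end{equation*}
Taking the supremum over the unit ball of $H_M^\sharp$ and then the limit over $\af$ gives \eqref{3SL-3-10001} for $f\ge 0$; since every $f\in M_*$ is a linear combination of four positive normal functionals, the general case follows.

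Finally I would transfer back to general countably generated $H$ using the projection $Q$ from the first paragraph. Given $\xi\in(H\hspace{-0.03in}\bullet\hspace{-0.03in} M)^\sharp$, view $\xi\in(H_A\hspace{-0.03in}\bullet\hspace{-0.03in} M)^\sharp$ via the orthogonal summand embedding, with $Q(\xi)=\xi$. Apply the $H_A$-case to obtain a net $z_\af\in H_A$ with $\|z_\af\|\le\|\xi\|$ and $\sup_{\|\zeta\|\le 1}|f(\la\xi-z_\af,\zeta\ra)|\to 0$ over all $\zeta\in(H_A\hspace{-0.03in}\bullet\hspace{-0.03in} M)^\sharp$. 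Put $x_\af=Q(z_\af)\in H$; then $\|x_\af\|\le\|z_\af\|\le\|\xi\|$, and since $Q$ is a self-adjoint projection fixing $(H\hspace{-0.03in}\bullet\hspace{-0.03in} M)^\sharp$, for any $\zeta\in(H\hspace{-0.03in}\bullet\hspace{-0.03in} M)^\sharp$ with $\|\zeta\|\le 1$ we have $\la\xi-x_\af,\zeta\ra=\la Q(\xi-z_\af),\zeta\ra=\la\xi-z_\af,Q(\zeta)\ra=\la\xi-z_\af,\zeta\ra$, so the supremum over the unit ball of $(H\hspace{-0.03in}\bullet\hspace{-0.03in} M)^\sharp$ is dominated by the one over the unit ball of $(H_A\hspace{-0.03in}\bullet\hspace{-0.03in} M)^\sharp$ and hence tends to $0$. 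The main obstacle is the first paragraph: getting the orthogonal decomposition of the duals and the identification $(H_A\hspace{-0.03in}\bullet\hspace{-0.03in} M)^\sharp=H_M^\sharp$ lined up correctly so that Lemma \ref{S3-L1} applies verbatim; once that bookkeeping is in place, the analytic content is just the Cauchy--Schwarz estimate above.
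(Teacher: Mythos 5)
Your proposal is correct and follows essentially the same route as the paper: prove the statement for $H_A$ by combining Lemma \ref{S3-L1} with the Cauchy--Bunyakovsky--Schwarz inequality for the pseudo inner product $f(\la\cdot,\cdot\ra)$, then transfer to a general countably generated $H$ via Kasparov's absorbing theorem and the projection onto $(H\hspace{-0.03in}\bullet\hspace{-0.03in} M)^\sharp$. The only difference is cosmetic (you set up the decomposition before the $H_A$ case rather than after), so nothing further is needed.
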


 \begin{proof}
Let us first prove this  for $H=H_A,$ even though when $A$ is not $\sigma$-unital,
 $H_A$ is not countably generated. 
 Lemma \ref{S3-L1} provides a net $\{x_\af\}$ in $H_A$ 
 with $\|x_\af\|\le \|\xi\|$ such that
 \beq
 \lim_\af f(\la \xi-x_\af, \xi-x_\af\ra)=0\rforal f\in M_*.
 \eneq
 Recall that, for any positive linear functional $f,$
 $H_M^\sharp\times H_M^\sharp\to \R$ defined 
 by $[x, y]_f=f(\la x, y\ra)$ (for all $x, y\in H_M^\sharp$) is a pseudo inner product.
 Therefore, by the Cauchy-Bunyakovsky-Schwarz inequality,
 \beq
 f(\la x, y\ra)^2\le f(\la x, x\ra)f(\la y, y\ra)\rforal x, y\in H_M^\sharp.
 \eneq
  It follows that, for any positive normal linear functional $f,$
 \beq\nonumber
\sup\{ |f(\la \xi-x_\af, \zeta \ra)|: \zeta\in H_M^\sharp, \|\zeta\|\le 1\}^2&\le& \sup\{ f(\la \zeta, \zeta\ra )f(\la \xi-x_\af, \xi-x_\af\ra)
:\zeta\in H_M^\sharp,\, \|\zeta\|\le 1\}\\
&=&\|f\|f(\la \xi-x_\af, \xi-x_\af\ra)\to 0.
 \eneq
 Thus we proved \eqref{3SL-3-10001} holds for $H=H_A.$

 Now let $H$ be a countably generated Hilbert $A$-module. 
 Then, by Kasparov's absorbing theorem, we may write $H_A=H\oplus H^\perp.$
 Hence $H_A\hspace{-0.03in}\bullet\hspace{-0.03in} M=H\hspace{-0.03in}\bullet\hspace{-0.03in} M\oplus (H^\perp \hspace{-0.03in}\bullet\hspace{-0.03in} M).$
 It follows that $H_M^\sharp=(H_A\hspace{-0.03in}\bullet\hspace{-0.03in} M)^\sharp=
(H\hspace{-0.03in}\bullet\hspace{-0.03in} M)^\sharp\oplus (H^\perp \hspace{-0.03in}\bullet\hspace{-0.03in} M)^\sharp.$
 Let $P: H_M^\sharp\to (H\hspace{-0.03in}\bullet\hspace{-0.03in} M)^\sharp$ be the projection such that $P|_H={\rm id}_H.$ 
 Then,   by what has been proved for $H_A,$ 
 there is   a net $y_\af\in H_A$ such 
 that $\|y_\af\|\le \|\xi\|$ and, for any $f\in M_*,$
 \beq
 \lim_\af \sup\{ f(\la \xi-y_\af,\zeta\ra): \zeta\in H_M^\sharp, \|\zeta\|\le 1\}=0.
 \eneq 
 Put $x_\af=P(y_\af)\in (H\hspace{-0.03in}\bullet\hspace{-0.03in} M)^\sharp.$ 
 Note that $P(\xi)=\xi.$ Then, for any $f\in M_*,$ 
 \beq\nonumber
 &&\hspace{-0.6in} \lim_\af \sup\{ f(\la \xi-x_\af,\zeta\ra): \zeta\in (H\hspace{-0.03in}\bullet\hspace{-0.03in} M)^\sharp, \|\zeta\|\le 1\}\\
  &=& \lim_\af \sup\{ f(\la \xi-x_\af, P(\zeta)\ra): \zeta\in (H\hspace{-0.03in}\bullet\hspace{-0.03in} M)^\sharp, \|\zeta\|\le 1\}\\\nonumber
 \hspace{-0.5in} &=& \lim_\af \sup\{ f(\la \xi-y_\af, \zeta\ra): \zeta\in (H\hspace{-0.03in}\bullet\hspace{-0.03in} M)^\sharp, \|\zeta\|\le 1\}\\\nonumber
 &\le &\lim_\af \sup\{ f(\la \xi-y_\af, \zeta\ra): \zeta\in H_M^\sharp, \|\zeta\|\le 1\}=0.
 \eneq
  This proves the lemma.
 \end{proof}

 \begin{thm}\label{S4-mt}
 Let $X$ be a Hilbert space, 
 $A\subset B(X)$ a \SCA\, and $M=\overline{A}^{SOT}$ 
 with $1_M={\rm id}_X,$ 
 and  let $H$ be a Hilbert $A$-module.
 Then the unit ball of $H$ is dense in the unit ball of $(H\hspace{-0.03in}\bullet\hspace{-0.03in} M)^\sharp$ in ${\cal T}_{ws}$ (of $(H\hspace{-0.03in}\bullet\hspace{-0.03in} M)^\sharp$).
 \end{thm}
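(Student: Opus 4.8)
The plan is a two‑stage approximation: first replace $\xi\in(H\hspace{-0.03in}\bullet\hspace{-0.03in} M)^\sharp$ by an element lying in $H\hspace{-0.03in}\bullet\hspace{-0.03in} M$ itself (not merely in its dual), and then approximate that element by a vector in the unit ball of $H$ using the density of $H$ in $H\hspace{-0.03in}\bullet\hspace{-0.03in} M$ already established in Theorem \ref{LP1}. First I would make the following reduction: fixing $\xi$ in the unit ball of $(H\hspace{-0.03in}\bullet\hspace{-0.03in} M)^\sharp$, $\ep>0$ and a finite set ${\cal F}\subset M_*$, it suffices to find $x\in H$ with $\|x\|\le1$ and $f(\la\xi-x,\xi-x\ra)<\ep$ for all $f\in{\cal F}$. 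Decomposing each $f$ into normal positive functionals, I may assume every $f\in{\cal F}$ is a normal positive functional with $\|f\|\le1$; for such $f$ the form $(\zeta,\zeta')\mapsto f(\la\zeta,\zeta'\ra)$ is a pseudo inner product on $(H\hspace{-0.03in}\bullet\hspace{-0.03in} M)^\sharp$, so $\zeta\mapsto f(\la\zeta,\zeta\ra)^{1/2}$ is a seminorm and I may use its triangle inequality.

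Stage 1 is an adaptation of Lemma \ref{Lfuniform} to the $W^*$-algebra $M$. Let $\{E_\lambda\}$ be an approximate identity for $K(H)$ with $\|E_\lambda\|\le1$ and let $\Psi_0\colon K(H)\to K(H\hspace{-0.03in}\bullet\hspace{-0.03in} M)$ be the homomorphism of Definition \ref{Dmap2}. By Lemma \ref{Lelambda}, $\{\Psi_0(E_\lambda)\}$ is an approximate identity for $K(H\hspace{-0.03in}\bullet\hspace{-0.03in} M)$, so $\Psi_0(E_\lambda)\nearrow1$ in $K(H\hspace{-0.03in}\bullet\hspace{-0.03in} M)^{**}$, and hence $1-\Psi_0(E_\lambda)\to0$ and $(1-\Psi_0(E_\lambda))^2\to0$ in the weak*-topology of $K(H\hspace{-0.03in}\bullet\hspace{-0.03in} M)^{**}$ (using $0\le(1-\Psi_0(E_\lambda))^2\le 1-\Psi_0(E_\lambda)$). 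Applying Proposition \ref{s2-adp} to $M$ and the Hilbert $M$-module $H\hspace{-0.03in}\bullet\hspace{-0.03in} M$ yields a unital normal homomorphism $F\colon K(H\hspace{-0.03in}\bullet\hspace{-0.03in} M)^{**}\to B((H\hspace{-0.03in}\bullet\hspace{-0.03in} M)^\sharp)$ with $F|_{K(H\hspace{-0.03in}\bullet\hspace{-0.03in} M)}=F_0$; put $\eta_\lambda:=F(\Psi_0(E_\lambda))(\xi)$. Then $\|\eta_\lambda\|\le\|\Psi_0(E_\lambda)\|\,\|\xi\|\le1$, and $\eta_\lambda\in H\hspace{-0.03in}\bullet\hspace{-0.03in} M$: this is Proposition \ref{PKHsharp} applied with $M$ and $H\hspace{-0.03in}\bullet\hspace{-0.03in} M$ in place of $A$ and $H$, and concretely follows from $F_0(\theta_{x,y})(\zeta)=x\la y,\zeta\ra\in H\hspace{-0.03in}\bullet\hspace{-0.03in} M$ together with the density of the finite rank operators in $K(H\hspace{-0.03in}\bullet\hspace{-0.03in} M)$. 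By the weak*-continuity statement in Proposition \ref{s2-adp}, $f(\la F((1-\Psi_0(E_\lambda))^2)(\xi),\xi\ra)\to0$ for every $f\in M_*$; since $F$ is a unital $*$-homomorphism, $F((1-\Psi_0(E_\lambda))^2)=(1-F(\Psi_0(E_\lambda)))^2$ with $1-F(\Psi_0(E_\lambda))$ self-adjoint, so $\la F((1-\Psi_0(E_\lambda))^2)(\xi),\xi\ra=\la\xi-\eta_\lambda,\xi-\eta_\lambda\ra$, and therefore $\lim_\lambda f(\la\xi-\eta_\lambda,\xi-\eta_\lambda\ra)=0$ for all $f\in M_*$. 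Now fix $\lambda$ with $f(\la\xi-\eta_\lambda,\xi-\eta_\lambda\ra)<\ep/4$ for all $f\in{\cal F}$.

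Stage 2 uses the density already available inside $H\hspace{-0.03in}\bullet\hspace{-0.03in} M$. Since $\eta_\lambda\in H\hspace{-0.03in}\bullet\hspace{-0.03in} M$ with $\|\eta_\lambda\|\le1$, Theorem \ref{LP1} provides a net $\{z_\beta\}$ in the unit ball of $H$ with $z_\beta\to\eta_\lambda$ in ${\cal T}_s$; since $\{z_\beta\}$ is bounded, the argument in the ``Moreover'' part of Lemma \ref{4LL+1} shows $\la\eta_\lambda-z_\beta,\eta_\lambda-z_\beta\ra\to0$ in the strong operator topology and hence, being bounded, in the weak*-topology of $M$, so $f(\la\eta_\lambda-z_\beta,\eta_\lambda-z_\beta\ra)\to0$ for all $f\in M_*$. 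Pick $\beta$ with $f(\la\eta_\lambda-z_\beta,\eta_\lambda-z_\beta\ra)<\ep/4$ for all $f\in{\cal F}$ and set $x:=z_\beta\in H$, so $\|x\|\le1$. Then for $f\in{\cal F}$,
$$f(\la\xi-x,\xi-x\ra)^{1/2}\le f(\la\xi-\eta_\lambda,\xi-\eta_\lambda\ra)^{1/2}+f(\la\eta_\lambda-x,\eta_\lambda-x\ra)^{1/2}<(\ep/4)^{1/2}+(\ep/4)^{1/2}=\ep^{1/2},$$
hence $f(\la\xi-x,\xi-x\ra)<\ep$, which gives the required $x$ and proves the theorem.

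The only delicate point is Stage 1: one has to see that the approximant $\eta_\lambda=F(\Psi_0(E_\lambda))(\xi)$ falls back into $H\hspace{-0.03in}\bullet\hspace{-0.03in} M$ (which is exactly what makes Theorem \ref{LP1} applicable), and one has to carry out the normality-plus-self-adjointness manipulation $F((1-\Psi_0(E_\lambda))^2)=(1-F(\Psi_0(E_\lambda)))^2$ that converts the approximate-identity convergence into the inner-product convergence $f(\la\xi-\eta_\lambda,\xi-\eta_\lambda\ra)\to0$. Everything else is the seminorm triangle inequality and the previously proved Kaplansky density of $H$ in $H\hspace{-0.03in}\bullet\hspace{-0.03in} M$.
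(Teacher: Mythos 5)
Your proof is correct, and while its first half coincides with the paper's, the second half takes a genuinely different route. Stage 1 (cutting down by $F(\Psi_0(E_\lambda))$ and using normality of $F$ together with $(1-F(\Psi_0(E_\lambda)))^2$ to get $f(\la \xi-\eta_\lambda,\xi-\eta_\lambda\ra)\to 0$ for $f\in M_*$) is exactly the paper's opening move. For Stage 2 the paper does \emph{not} return to $H\hspace{-0.03in}\bullet\hspace{-0.03in} M$: it keeps working in the dual module, passes to the countably generated submodule $H_\lambda=\overline{E_\lambda(H)}$, decomposes $(H\hspace{-0.03in}\bullet\hspace{-0.03in} M)^\sharp$ via Lemma \ref{ext}, and invokes Lemma \ref{S3-L2} --- a separate Kaplansky-type density statement for $H_\lambda$ inside $(H_\lambda\hspace{-0.03in}\bullet\hspace{-0.03in} M)^\sharp$ whose proof goes through the operator-matrix argument on $l^2(X)$ of Lemma \ref{S3-L1} and Kasparov absorption. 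You instead observe that the approximant $\eta_\lambda=F(\Psi_0(E_\lambda))(\xi)$ already lies in $H\hspace{-0.03in}\bullet\hspace{-0.03in} M$ (the $W^*$-version of Proposition \ref{PKHsharp}, which is legitimate since $F_0(\theta_{x,y})(\zeta)=x\la y,\zeta\ra\in H\hspace{-0.03in}\bullet\hspace{-0.03in} M$ and $H\hspace{-0.03in}\bullet\hspace{-0.03in} M$ is norm-closed in $(H\hspace{-0.03in}\bullet\hspace{-0.03in} M)^\sharp$), and then quote Theorem \ref{LP1} plus the ``Moreover'' part of Lemma \ref{4LL+1} to convert ${\cal T}_s$-convergence of a bounded net into weak*-convergence of the inner products; there is no circularity since Theorem \ref{LP1} is proved independently of Section 6. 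Your route is shorter and reuses the Section 4 density theorem, at the cost of producing only the seminorm estimate $f(\la\xi-x,\xi-x\ra)<\ep$; the paper's route yields the estimate $|f(\la\xi-x,y\ra)|<\ep$ uniformly over the unit ball of $y\in(H\hspace{-0.03in}\bullet\hspace{-0.03in} M)^\sharp$ and establishes Lemma \ref{S3-L2} as a standalone result --- but since the two estimates are equivalent up to constants by the Cauchy--Bunyakovsky--Schwarz inequality for $f(\la\cdot,\cdot\ra)$, nothing is lost for the theorem as stated.
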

 
 \begin{proof}
 Let $\xi\in (H\hspace{-0.03in}\bullet\hspace{-0.03in} M)^\sharp$ with $\|\xi\|\le 1.$  It suffices to show that,
 for any $\ep>0,$ any finite subset $Y\subset (H\hspace{-0.03in}\bullet\hspace{-0.03in} M)^\sharp$ and  any finite subset ${\cal F}\subset  M_*,$
 there is $x\in H$ such that
 \beq
\|x\|\le \|\xi\|\andeqn |f(\la \xi-x, y\ra)|<\ep\rforal y\in  (H\hspace{-0.03in}\bullet\hspace{-0.03in} M)^{\sharp}, \|y\|\le 1, \andeqn f\in {\cal F}.
 \eneq
 
 Let us fix $\ep$ and ${\cal F}.$
 Choose an approximate identity $\{E_\lambda\}$ for $K(H).$ 
 It follows that $E_\lambda\nearrow {\rm id}_H.$ Note that
 ${\rm id}_H\in M(K(H)).$ By the last part of Proposition \ref{PPsi}, 
 $\Psi_0({\rm id}_H)={\rm id}_{H\hspace{-0.01in}\bullet\hspace{-0.01in} M}.$  By 3.17 of \cite{Pa}, 
 $F\circ \Psi_0({\rm id}_H)={\rm id}_{(H\hspace{-0.01in}\bullet\hspace{-0.01in} M)^\sharp},$  where $F$ is the map given by Proposition \ref{s2-adp}.  
 Note also that, by Lemma \ref{Lelambda}, $\{\Psi_0(E_\lambda)\}$ is an approximate identity for 
 $K(H\hspace{-0.03in}\bullet\hspace{-0.03in} M).$ 
 In the universal representation of  $K(H\hspace{-0.03in}\bullet\hspace{-0.03in} M),$ 
 $1-\Psi_0(E_\lambda)$ converges to zero in the strong operator topology. 
 Note that $\|1-\Psi_0(E_\lambda)\|\le 1.$ 
 Therefore $(1-\Psi_0(E_\lambda))^*(1-\Psi_0(E_\lambda))$ also converges to zero in the strong operator topology. Hence (since \{$\|(1-\Psi_0(E_\lambda))^*(1-\Psi_0(E_\lambda))\|\}$ is bounded,) it converges to zero in the weak*-topology. 
 By Proposition \ref{s2-adp},
 we have, for all $f\in M_*,$
 \beq
 &&f(\la \xi-F\circ \Psi_0(E_\lambda)(\xi), \xi-F\circ \Psi_0(E_\lambda)(\xi)\ra)\\\label{LLL-100}
 &&=f(\la (1-F\circ \Psi_0(E_\lambda))^*(1-F\circ \Psi_0(E_\lambda))(\xi), \xi\ra)\to 0.
 \eneq
 Next let $g$ be a positive normal linear functional in $M_*.$
 Then, for any $y\in  (H\hspace{-0.03in}\bullet\hspace{-0.03in} M)^\sharp$ with $\|y\|\le1,$ 
 \beq
\hspace{-0.5in} |g(\la \xi-F\circ \Psi_0(E_\lambda)(\xi), y \ra)|^2 &\le & g(\la \xi-F\circ \Psi_0(E_\lambda)(\xi), \xi-F\circ \Psi_0(E_\lambda)(\xi)\ra) g(\la y, y\ra)\\
&\le &\|g\|\|y\|^2g(\la \xi-F\circ \Psi_0(E_\lambda)(\xi), \xi-F\circ \Psi_0(E_\lambda)(\xi)\ra).
 \eneq
 Hence, by \eqref{LLL-100},
 \beq
 \lim_\af  \big(\sup \{|g(\la \xi-F\circ \Psi_0(E_\lambda)(\xi), y \ra)|: y\in (H\hspace{-0.03in}\bullet\hspace{-0.03in} M)^\sharp,\, \|y\|\le 1\}\big)=0.
 \eneq
 It follows that, for any $f\in M_*,$
 \beq
 \lim_\af \big(\sup\{|f(\la \xi-F\circ \Psi_0(E_\lambda)(\xi), y \ra)|: y\in (H\hspace{-0.03in}\bullet\hspace{-0.03in} M)^\sharp,\, \|y\|\le 1\}\big)=0.
 \eneq

 Thus we obtain 
$\lambda_0$ such that, 
 for all $\lambda\ge \lambda_0,$ 
 \beq\label{S4-mt-101}
 |f(\la \xi-F\circ\Psi_0(E_\lambda)(\xi), y\ra)|<\ep/2\rforal y\in (H\hspace{-0.03in}\bullet\hspace{-0.03in} M)^{\sharp},\, \|y\|\le 1,\andeqn f\in {\cal F}.
 \eneq
  Let $H_\lambda=\overline{E_\lambda(H)}.$ As in the proof of Theorem \ref{MMT}, $H_\lambda$ is countably 
 generated. Moreover,  by Lemma \ref{ext}, 
 $$
 (H\hspace{-0.03in}\bullet\hspace{-0.03in} M)^{\sharp}=
 (H_\lambda\hspace{-0.03in}\bullet\hspace{-0.03in} M)^{\sharp} \oplus ( (H_\lambda\hspace{-0.03in}\bullet\hspace{-0.03in} M)^\sharp)^\perp.
 $$
 Let $P_\lambda: (H\hspace{-0.03in}\bullet\hspace{-0.03in}M)^{\sharp}\to 
 (H_\lambda\hspace{-0.03in}\bullet\hspace{-0.03in} M)^{\sharp}$ 
 be the projection.
 Note that $$\Psi_\lambda(E_\lambda)(\xi),\Psi_\lambda(E_\lambda)(y)\in P_\lambda((H\hspace{-0.03in}\bullet\hspace{-0.03in} M)^{\sharp})=(H_\lambda\hspace{-0.03in}\bullet\hspace{-0.03in} M)^{\sharp}$$ 
 for all $y\in (H\hspace{-0.03in}\bullet\hspace{-0.03in}M)^{\sharp}.$

 It follows from Lemma \ref{S3-L2} that there is $x\in H_\lambda$ with $\|x\|\le \|\Psi(E_\lambda)(\xi)\|\le \|\xi\|$
 such that
 \beq
 |f(\la \Psi(E_\lambda)(\xi)-x, P_\lambda(y)\ra)|<\ep/2\rforal y\in (H_\lambda\hspace{-0.03in}\bullet\hspace{-0.03in}M)^{\sharp}\andeqn \|y\|\le 1.
 \eneq
 Since $P_\lambda\Psi(E_\lambda)=\Psi(E_\lambda)$  and $x\in H_\lambda,$
 we have, for all  $y\in (H_\lambda\hspace{-0.03in}\bullet\hspace{-0.03in}M)^{\sharp}\andeqn \|y\|\le 1,$
 \beq
|f(\la \Psi(E_\lambda)(\xi)-x, y\ra)|&=&|f(\la P_\lambda\Psi(E_\lambda)(\xi)-P_\lambda(x), y\ra)|\\
&=&|f(\la \Psi(E_\lambda)(\xi)-x, P_\lambda(y)\ra)|<\ep/2. 
 \eneq
 Thus, for all $y\in (H\hspace{-0.03in}\bullet\hspace{-0.03in}M)^{\sharp}$  with 
 $\|y\|\le 1$ and $f\in {\cal F},$ 
 \beq
 |f(\la \xi-x, y\ra)|\le  |f(\la \xi-\Psi(E_\lambda)(\xi), y\ra)|+|f(\la \Psi(E_\lambda)(\xi)-x, y\ra)|
 <\ep/2+\ep/2=\ep.
 \eneq
 \end{proof}
 
 The next two  statements are the main results of this section:
 
 \begin{cor}\label{MC1}
 Let $A$ be a $W^*$-algebra and $H$ be a Hilbert $A$-module.
 Then the unit ball of $H$ is dense in $H^\sharp$ in ${\cal T}_{ws}.$
 \end{cor}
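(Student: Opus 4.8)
The plan is to deduce Corollary \ref{MC1} directly from Theorem \ref{S4-mt} by taking $M = A$ itself. Since $A$ is a $W^*$-algebra, we may pick a faithful normal representation and regard $A \subset B(X)$ as a von Neumann algebra acting on a Hilbert space $X$ with $1_A = \mathrm{id}_X$; then $A = \overline{A}^{SOT} = M$. In this situation the construction $H \hspace{-0.03in}\bullet\hspace{-0.03in} M$ is simply $H \hspace{-0.03in}\bullet\hspace{-0.03in} A$, and since $A$ is already unital with $H$ a Hilbert $A$-module, one has $H \hspace{-0.03in}\bullet\hspace{-0.03in} A = H$ (the tensor-and-quotient construction collapses: $x \hspace{-0.03in}\bullet\hspace{-0.03in} a = xa$). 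Consequently $(H \hspace{-0.03in}\bullet\hspace{-0.03in} M)^\sharp = H^\sharp$, and $M_* = A_*$ is the predual appearing in the definition of ${\cal T}_{ws}$ in \ref{DT0}.

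First I would record the identification $H\hspace{-0.03in}\bullet\hspace{-0.03in} A = H$: the map $\iota\colon H \to H\hspace{-0.03in}\bullet\hspace{-0.03in} A$, $x \mapsto x\otimes 1$, is an isometric module embedding by \ref{Dmap1}, and it is surjective since for $x \in H$ and $a \in A$ one has $\iota(xa) = (x\otimes a)/N = x\hspace{-0.03in}\bullet\hspace{-0.03in} a$ (the computation in \ref{Dmap1} shows $(x\cdot a)\otimes 1 - x\otimes a \in N$), so the dense submodule $(H\otimes A)/N$ is already contained in $\iota(H)$. Hence $H\hspace{-0.03in}\bullet\hspace{-0.03in} A = H$ as Hilbert $A$-modules, and dualizing gives $(H\hspace{-0.03in}\bullet\hspace{-0.03in} A)^\sharp = H^\sharp$.

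Then I would simply invoke Theorem \ref{S4-mt} with $M = A$: its conclusion is that the unit ball of $H$ is dense in the unit ball of $(H\hspace{-0.03in}\bullet\hspace{-0.03in} M)^\sharp = H^\sharp$ in the topology ${\cal T}_{ws}$, which is exactly the assertion of the corollary. One small point to check is that the topology ${\cal T}_{ws}$ as defined inside the proof/statement of Theorem \ref{S4-mt} (using $M_*$) coincides with ${\cal T}_{ws}$ on $H^\sharp$ as defined in \ref{DT0} (using $A_*$); this is immediate once $M = A$ and $(H\hspace{-0.03in}\bullet\hspace{-0.03in} M)^\sharp = H^\sharp$, since the generating neighbourhoods $O_{\ep,\xi,{\cal F}}$ are literally the same sets.

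There is essentially no obstacle here — the corollary is a specialization of the theorem, and the only real content is the routine but necessary observation that $H\hspace{-0.03in}\bullet\hspace{-0.03in} A = H$ when $A$ is unital (indeed this is noted in the unnumbered remark just after Proposition \ref{HAB}). So the proof is short: reduce to the case $A \subset B(X)$ a von Neumann algebra with $1_A = \mathrm{id}_X$, note $\overline{A}^{SOT} = A$ and $H\hspace{-0.03in}\bullet\hspace{-0.03in} A = H$, and apply Theorem \ref{S4-mt}.

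\begin{proof}
Since $A$ is a $W^*$-algebra, we may assume $A \subset B(X)$ is a von Neumann algebra acting on a Hilbert space $X$ with $1_A = \mathrm{id}_X$, so that $M := \overline{A}^{SOT} = A$. Because $A$ is unital and $H$ is a Hilbert $A$-module, the embedding $\iota\colon H \to H\hspace{-0.03in}\bullet\hspace{-0.03in} A$ of \ref{Dmap1} is surjective: for $x \in H$ and $a \in A$ we have $\iota(xa) = x\hspace{-0.03in}\bullet\hspace{-0.03in} a$ by the computation in \ref{Dmap1}, so $\iota(H)$ contains the dense submodule $(H\otimes A)/N$, whence $H\hspace{-0.03in}\bullet\hspace{-0.03in} A = H$ as Hilbert $A$-modules. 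Dualizing, $(H\hspace{-0.03in}\bullet\hspace{-0.03in} A)^\sharp = H^\sharp$. Now apply Theorem \ref{S4-mt} with this $M = A$: the unit ball of $H$ is dense in the unit ball of $(H\hspace{-0.03in}\bullet\hspace{-0.03in} M)^\sharp = H^\sharp$ in ${\cal T}_{ws}$. Since $M_* = A_*$, the topology ${\cal T}_{ws}$ here is exactly the topology ${\cal T}_{ws}$ of $H^\sharp$ from Definition \ref{DT0}. This proves the corollary.
\end{proof}
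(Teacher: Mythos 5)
Your proposal is correct and is exactly the paper's argument: the paper's proof of Corollary \ref{MC1} is simply ``Let $M=A$ and then apply Theorem \ref{S4-mt}.'' The extra details you supply (that $H\hspace{-0.03in}\bullet\hspace{-0.03in}A=H$ when $A$ is unital, so $(H\hspace{-0.03in}\bullet\hspace{-0.03in}M)^\sharp=H^\sharp$ and the two topologies ${\cal T}_{ws}$ coincide) are precisely the routine identifications the paper leaves implicit, and they are correct.
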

 
 \begin{proof}
 Let $M=A$ and then apply Theorem \ref{S4-mt}.
 \end{proof}
 
 \begin{thm}\label{S6MT}
 Let $A$ be a \CA\, and $H$ be a Hilbert $A$-module.
 Then the unit ball of $H$ is dense in $H^\sim$ in ${\cal T}_{ws}$ 
 (as $H^\sim=(H\hspace{-0.03in}\bullet\hspace{-0.03in} A^{**})^{\sharp}$).
 \end{thm}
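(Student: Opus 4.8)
The plan is to deduce Theorem \ref{S6MT} directly from Theorem \ref{S4-mt} by specializing the pair $(X,A,M)$ to the universal representation of $A$. Concretely, let $\pi_U\colon A\to B(H_U)$ be the universal representation, viewed as a faithful nondegenerate representation so that $1_{A^{**}}={\rm id}_{H_U}$, and identify $A$ with $\pi_U(A)\subset B(H_U)$ and $A^{**}$ with $\overline{\pi_U(A)}^{SOT}=\pi_U(A)''\subset B(H_U)$. With $X=H_U$ and $M=A^{**}$, all the hypotheses of Theorem \ref{S4-mt} are met: $A$ is a $C^*$-subalgebra of $B(X)$, $M=\overline A^{SOT}$, and $1_M={\rm id}_X$ (this is exactly the situation highlighted in Definition \ref{MMSOT}). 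Then $H\hspace{-0.03in}\bullet\hspace{-0.03in}M=H\hspace{-0.03in}\bullet\hspace{-0.03in}A^{**}$ by the construction in \ref{DHsim}, so $(H\hspace{-0.03in}\bullet\hspace{-0.03in}M)^\sharp=(H\hspace{-0.03in}\bullet\hspace{-0.03in}A^{**})^\sharp=H^\sim$.

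First I would spell out that the topology ${\cal T}_{ws}$ on $(H\hspace{-0.03in}\bullet\hspace{-0.03in}M)^\sharp$ appearing in Theorem \ref{S4-mt} coincides with the topology ${\cal T}_{ws}$ on $H^\sim$ from Definition \ref{DT0}: both are generated by the sets $O_{\ep,\xi,{\cal F}}=\{\zeta: |f(\la \xi-\zeta,\xi-\zeta\ra)|<\ep,\ f\in{\cal F}\}$ where ${\cal F}$ ranges over finite subsets of the predual, and the predual of $A^{**}$ as a $W^*$-algebra is $A^*$. (One should note that Definition \ref{DT0} is literally stated for $A$ a $W^*$-algebra and $H$ a Hilbert $A$-module, applied here to the $W^*$-algebra $A^{**}$ and the Hilbert $A^{**}$-module $H\hspace{-0.03in}\bullet\hspace{-0.03in}A^{**}$; the inner product used is the $A^{**}$-valued one on $H^\sim$, which is the same inner product used in Theorem \ref{S4-mt}.) Then I would invoke Theorem \ref{S4-mt} verbatim: the unit ball of $H$ is dense in the unit ball of $(H\hspace{-0.03in}\bullet\hspace{-0.03in}M)^\sharp=H^\sim$ in ${\cal T}_{ws}$, which is precisely the assertion of Theorem \ref{S6MT}.

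I do not anticipate a serious obstacle here — this is a specialization/bookkeeping argument rather than a new construction. The only point requiring a sentence of care is the identification of preduals and of the two incarnations of the inner product, i.e.\ confirming that "$M_*$" in Theorem \ref{S4-mt} is $A^*$ when $M=A^{**}$, and that the $A^{**}$-valued inner product on $H^\sim$ that enters ${\cal T}_{ws}$ in Definition \ref{DT0} is the same one that enters the statement of Theorem \ref{S4-mt} (this is the inner product on $(H\hspace{-0.03in}\bullet\hspace{-0.03in}M)^\sharp$ making it self-dual, as in Theorem 3.2 of \cite{Pa}). Once that is noted, the proof is a single line.

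\begin{proof}
Consider the universal representation $\pi_U\colon A\to B(H_U)$, which is faithful and nondegenerate, and identify $A$ with $\pi_U(A)$. Then $A$ is a $C^*$-subalgebra of $B(H_U)$, $A^{**}=\overline{A}^{SOT}=\pi_U(A)''$, and $1_{A^{**}}={\rm id}_{H_U}$ (see \ref{MMSOT}). Taking $X=H_U$ and $M=A^{**}$, the hypotheses of Theorem \ref{S4-mt} hold, and $H\hspace{-0.03in}\bullet\hspace{-0.03in}M=H\hspace{-0.03in}\bullet\hspace{-0.03in}A^{**}$, so $(H\hspace{-0.03in}\bullet\hspace{-0.03in}M)^\sharp=H^\sim$. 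The predual of the $W^*$-algebra $M=A^{**}$ is $M_*=A^*$, and the inner product entering ${\cal T}_{ws}$ in Definition \ref{DT0} (applied to the $W^*$-algebra $A^{**}$ and the Hilbert $A^{**}$-module $H\hspace{-0.03in}\bullet\hspace{-0.03in}A^{**}$) is the $A^{**}$-valued inner product on $H^\sim=(H\hspace{-0.03in}\bullet\hspace{-0.03in}A^{**})^\sharp$ furnished by Theorem 3.2 of \cite{Pa}, which is the same inner product used in Theorem \ref{S4-mt}. Hence the topology ${\cal T}_{ws}$ of $(H\hspace{-0.03in}\bullet\hspace{-0.03in}M)^\sharp$ in Theorem \ref{S4-mt} is exactly the topology ${\cal T}_{ws}$ of $H^\sim$. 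By Theorem \ref{S4-mt}, the unit ball of $H$ is dense in the unit ball of $(H\hspace{-0.03in}\bullet\hspace{-0.03in}M)^\sharp=H^\sim$ in ${\cal T}_{ws}$, which is the assertion of the theorem.
\end{proof}
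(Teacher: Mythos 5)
Your proposal is correct and is essentially identical to the paper's own proof, which also obtains Theorem \ref{S6MT} by taking $X=H_U$ (the universal representation), $M=\overline{A}^{SOT}=A^{**}$, and applying Theorem \ref{S4-mt}. The extra bookkeeping you supply (identifying $M_*$ with $A^*$ and matching the two incarnations of ${\cal T}_{ws}$) is accurate and merely makes explicit what the paper leaves implicit.
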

 
 \begin{proof}
 We choose the  universal representation $\pi_U$ and its strong operator closure $A''=A^{**}.$
 Then apply Theorem \ref{S4-mt}.
 \end{proof}

\end{document}